\newcommand{\arxiv}[1]{\href{http://arxiv.org/abs/#1}{\tt arXiv:\nolinkurl{#1}}}
\newcommand{\googlebooks}[1]{(preview at \href{http://books.google.com/books?id=#1}{google books})}
\theoremstyle{plain}
\newtheorem{prop}{Proposition}[section]
\newtheorem{thm}[prop]{Theorem}
\newtheorem{lem}[prop]{Lemma}
\newtheorem{cor}[prop]{Corollary}
\newtheorem*{cor*}{Corollary}
\numberwithin{equation}{section}
\theoremstyle{remark}
\newtheorem{ex}[prop]{Example}
\newtheorem{remark}[prop]{Remark}           
\newtheorem*{rem*}{Remark}               
\newtheorem*{ex*}{Example}                
\theoremstyle{definition}
\newtheorem{defn}[prop]{Definition}         
\newtheorem{nota}[prop]{Notation}   
\newtheorem*{defn*}{Definition}             
\theoremstyle{plain}
\newcounter{comment}
\newcommand{\noop}[1]{}
\def\clap#1{\hbox to 0pt{\hss#1\hss}}
\newcommand{\Integer}{\mathbb Z}
\newcommand{\Complex}{\mathbb C}
\def\semicolon{;}
\def\applytolist#1{
    \expandafter\def\csname multi#1\endcsname##1{
        \def\multiack{##1}\ifx\multiack\semicolon
            \def\next{\relax}
        \else
            \csname #1\endcsname{##1}
            \def\next{\csname multi#1\endcsname}
        \fi
        \next}
    \csname multi#1\endcsname}
\def\calc#1{\expandafter\def\csname c#1\endcsname{{\mathcal #1}}}
\def\bbc#1{\expandafter\def\csname bb#1\endcsname{{\mathbb #1}}}
\def\bfc#1{\expandafter\def\csname bf#1\endcsname{{\mathbf #1}}}
\newcommand{\id}{\boldsymbol{1}}
\renewcommand{\imath}{\mathfrak{i}}
\renewcommand{\jmath}{\mathfrak{j}}
\newcommand{\norm}[1]{\left|\left|#1\right|\right|}
\newcommand{\set}[1]{\left\{#1\right\}}
\newcommand{\code}[1]{{\tt #1}}
\newcommand{\hashdef}[2]{\@namedef{#1}{#2}}
\newcommand{\hashlookup}[1]{\@nameuse{#1}}
\newcommand{\pathtographs}{../../graphs/}}%
\newcommand{\pathtographs}{diagrams/graphs/}}
\newcommand{\bigraph}[1]{{\hspace{-3pt}\begin{array}{c}%
  \raisebox{-2.5pt}{\includegraphics[height=6mm]{\pathtographs \hashlookup{#1}}}%
\end{array}\hspace{-3pt}}}
\newcommand{\Hom}[3]{\operatorname{Hom_{#1}}\!\!\left(#2 \to #3\right)}
\newcommand{\tr}[1]{\text{tr}(#1)}
\tikzstyle{shaded}=[fill=red!10!blue!20!gray!30!white]
\tikzstyle{unshaded}=[fill=white]
\tikzstyle{empty box}=[circle, draw, thick, fill=white, opaque, inner sep=2mm]
\tikzstyle{annular}=[scale=.7, inner sep=1mm, baseline]
\tikzstyle{rectangular}=[scale=.75, inner sep=1mm, baseline=-.1cm]
\definecolor{dark-red}{rgb}{0.7,0.25,0.25}
\definecolor{dark-blue}{rgb}{0.15,0.15,0.55}
\definecolor{medium-blue}{rgb}{0,0,0.65}
\DeclareMathOperator{\spann}{span}
\DeclareMathOperator{\Tr}{Tr}
\renewcommand{\set}[2]{\left\{#1\middle|#2\right\}}
\newcommand{\quadraticDiagram}[3]{
\begin{tikzpicture}[baseline = -.2cm]
	\clip (-.5,-.8)--(-.5,.7)--(2.4,.7)--(2.4,-.8);
	\draw (0,0)--(1.6,0)--(1.6,-1)--(0,-1)--(0,0);
	\node at (.8,.15) {{\scriptsize{$#1-1$}}};
	\node at (.4,-.5) {{\scriptsize{$#1+1$}}};
	\node at (2,-.5) {{\scriptsize{$#1+1$}}};
	\draw[thick, unshaded] (0,0) circle (.4);
	\node at (0,0) {$#2$};
	\draw[thick, unshaded] (1.6,0) circle (.4);
	\node at (1.6,0) {$#3$};
	\node at (1.6,.55) {$\star$};
	\node at (0,.55) {$\star$};
\end{tikzpicture}
}
\newcommand{\boxquadraticDiagram}[3]{
\begin{tikzpicture}[baseline = -.9cm]
	\clip (-.7,-1.7)--(-.7,.7)--(2.4,.7)--(2.4,-1.7);
	\draw (0,-2)--(0,0)--(1.6,0)--(1.6,-2);
	\node at (.8,.15) {{\scriptsize{$#1-1$}}};
	\node at (.4,-.6) {{\scriptsize{$#1+1$}}};
	\node at (2,-.6) {{\scriptsize{$#1+1$}}};
	\draw[thick, unshaded] (0,0) circle (.4);
	\node at (0,0) {$#2$};
	\draw[thick, unshaded] (1.6,0) circle (.4);
	\node at (1.6,0) {$#3$};
	\node at (1.6,.55) {$\star$};
	\node at (0,.55) {$\star$};
	\draw[thick, unshaded] (-.4,-1.5)--(-.4,-.8)--(2,-.8)--(2,-1.5)--(-.4,-1.5);
	\node at (.8,-1.15) {$f^{(2 #1+2)}$};
	\node at (-.55,-1.15) {$\star$};
\end{tikzpicture}
}
\newcommand{\jellyfishDiagram}[2]{
\begin{tikzpicture}[baseline = -.3cm]
	\filldraw[unshaded] (-.7,-.8)--(-.7,0) arc (180:0:.7cm)--(.7,-.8);
	\draw (0,0)--(0,-.8);
	\node at (.2,-.6) {{\scriptsize{$2 #1$}}};
	\draw[thick, unshaded] (0,0) circle (.4);
	\node at (0,0) {$#2$};
	\node at (-.25,-.5) {$\star$};
\end{tikzpicture}
}
\newcommand{\boxjellyfishDiagram}[2]{
\begin{tikzpicture}[baseline = -.9cm]
	\filldraw[unshaded] (-.7,-1.7)--(-.7,0) arc (180:0:.7cm)--(.7,-1.7);
	\draw (0,0)--(0,-1.7);
	\node at (.2,-.6) {{\scriptsize{$2 #1$}}};
	\draw[thick, unshaded] (0,0) circle (.4);
	\node at (0,0) {$#2$};
	\node at (-.25,-.5) {$\star$};
	\draw[thick, unshaded] (-1,-1.5)--(-1,-.8)--(1,-.8)--(1,-1.5)--(-1,-1.5);	
	\node at (0,-1.15) {$f^{(2 #1+2)}$};
	\node at (-1.15,-1.15) {$\star$};
\end{tikzpicture}
}
\newcommand{\capunderDiagram}[2]{
\begin{tikzpicture}[baseline = -.3cm]
	\draw (.2,0)--(.4,-1);
	\draw (-.2,0)--(-.4,-1);
	\draw (-.2,-1) arc (180:0:.2cm);
	\node at (-.5,-.5) {{\scriptsize{$#1$}}};
	\node at (.5,-.5) {{\scriptsize{$#1$}}};
	\draw[thick, unshaded] (0,0) circle (.4);
	\node at (0,0) {$#2$};
	\node at (0,-.55) {$\star$};
\end{tikzpicture}
}
\newcommand{\jwoverDiagram}[2]{
\begin{tikzpicture}[baseline = -.3cm]
	\draw[thick,unshaded] (0,-.2) circle (1.3);
	\draw (.2,0)--(.52,-.82);
	\draw (-.2,0)--(-.52,-.82);
	\draw (-.27,-.95) arc (180:0:.27cm);
	\node at (-.55,-.5) {{\scriptsize{$#1$}}};
	\node at (.55,-.5) {{\scriptsize{$#1$}}};
	\draw[thick, unshaded] (0,0) circle (.4);
	\draw[thick] (0,-.2) circle (.8);
	\node at (0,.7) {$f^{(2n+2)}$};
	\node at (0,-.55) {$\star$};
	\node at (0,0) {$#2$};
	\node at (0,-.55) {$\star$};
	\fill[unshaded] (-.15,-1.6)--(-.15,-.9)--(.15,-.9)--(.15,-1.6);
	\draw[thick] (-.15,-1.51)--(-.15,-.97);
	\draw[thick] (.15,-1.51)--(.15,-.97);
	\draw (.27,-1.48)--(.27,-1.7);
	\draw (-.27,-1.48)--(-.27,-1.7);
	\draw (.55,-1.39)--(.55,-1.7);
	\draw (-.55,-1.39)--(-.55,-1.7);
\end{tikzpicture}
}
\newcommand{\jellyfishSquared}[2]{
\begin{tikzpicture}[baseline = -.3cm]
	\filldraw[unshaded] (-.9,-.8)--(-.9,0) arc (180:0:.9cm)--(.9,-.8);
	\filldraw[unshaded] (-.7,-.8)--(-.7,0) arc (180:0:.7cm)--(.7,-.8);
	\draw (0,0)--(0,-.8);
	\node at (.2,-.6) {{\scriptsize{$2 #1$}}};
	\draw[thick, unshaded] (0,0) circle (.4);
	\node at (0,0) {$#2$};
	\node at (-.25,-.5) {$\star$};
\end{tikzpicture}
}
\newcommand{\boxjellyfishSquared}[2]{
\begin{tikzpicture}[baseline = -.9cm]
	\filldraw[unshaded] (-.9,-1.7)--(-.9,0) arc (180:0:.9cm)--(.9,-1.7);
	\filldraw[unshaded] (-.7,-1.7)--(-.7,0) arc (180:0:.7cm)--(.7,-1.7);
	\draw (0,0)--(0,-1.7);
	\node at (.2,-.6) {{\scriptsize{$2 #1$}}};
	\draw[thick, unshaded] (0,0) circle (.4);
	\node at (0,0) {$#2$};
	\node at (-.25,-.5) {$\star$};
	\draw[thick, unshaded] (-1,-1.5)--(-1,-.8)--(1,-.8)--(1,-1.5)--(-1,-1.5);	
	\node at (0,-1.15) {$f^{(2 #1+4)}$};
	\node at (-1.15,-1.15) {$\star$};
\end{tikzpicture}
}
\newcommand{\gA}{{\textgoth{A}}}
\newcommand{\fB}{{\mathfrak{B}}}
\newcommand{\qtac}{{\mathcal{QT}\hspace{-.05in}\mathcal{AC}}}
\title{Constructing spoke subfactors using the jellyfish algorithm}
\author{Scott Morrison and David Penneys}
\begin{document}
\maketitle

\begin{abstract}
Using Jones' quadratic tangles formulas, we automate the construction of the 4442, 3333, 3311, and 2221 spoke subfactors by finding sets of 1-strand jellyfish generators. The 4442 spoke subfactor is new, and the 3333, 3311, and 2221 spoke subfactors were previously known.
\end{abstract}


\section{Introduction}

In this paper, we construct a number of subfactors by giving explicit generators and relations for the corresponding planar algebras. In particular, we use Jones' analysis of quadratic tangles to systematically derive `jellyfish relations', which are straightforward to analyze. 
We construct one new subfactor as well as three previously known subfactors.
The main result of the paper is:
\begin{thm}
There is a `4442' subfactor at index $3+\sqrt{5}$ with principal graphs 
$$
\left(\bigraph{bwd1v1v1v1v1p1p1v1x0x0p0x1x0p0x0x1v1x0x0p0x1x0v1x0p0x1duals1v1v1v2x1x3v2x1},\bigraph{bwd1v1v1v1v1p1p1v1x0x0p0x1x0p0x0x1v1x0x0p0x1x0v1x0p0x1duals1v1v1v2x1x3v2x1}\right).
$$
\end{thm}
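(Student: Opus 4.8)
The plan is to present the asserted subfactor as a planar algebra given by generators and relations, and then to verify the hypotheses of the jellyfish algorithm so that this presentation yields an evaluable planar algebra which is positive, hence a subfactor planar algebra, with the stated principal graphs. The organizing idea is the graph planar algebra embedding theorem: a subfactor planar algebra with principal graph $\Gamma$ sits as a planar subalgebra of the graph planar algebra $G(\Gamma)$. I would therefore take $\Gamma$ to be the candidate 4442 graph displayed in the statement, check as a first consistency test that its graph norm is $\sqrt{3+\sqrt5}$ (so that the loop parameter $\delta$ satisfies $\delta^2 = 3+\sqrt5$), and then search for the subfactor inside $G(\Gamma)$.

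First I would use Jones' quadratic tangles formulas to determine the algebraic constraints that any low-weight generator must satisfy: its rotational eigenvalue, the structure constants controlling its self-interaction, and the coefficients produced when strands are capped off. This turns the hunt for a generator into a finite system of equations in the finite-dimensional box space of $G(\Gamma)$, which I would solve explicitly (with computer assistance) to produce a set of 1-strand jellyfish generators, i.e.\ uncappable rotational eigenvectors spanning the relevant box space.

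With generators in hand, I would derive the jellyfish relations themselves---relations rewriting a product of two generators, and a generator with a cap attached, in terms of diagrams in which every generator has been dragged to the outer boundary. Evaluability then follows by the usual jellyfish argument: in any closed diagram one pulls all generators to the boundary, leaving a Temperley--Lieb diagram that reduces to a scalar. This argument simultaneously bounds each box-space dimension from above by the count predicted by $\Gamma$.

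Finally I would assemble the conclusion. Positivity is inherited for free, since the generated planar subalgebra $P$ lives inside $G(\Gamma)$, whose inner product is positive definite; thus $P$ is a subfactor planar algebra. The explicit generators supply the matching lower bounds on box-space dimensions, so $P$ realizes exactly the fusion data encoded in $\Gamma$ and its principal graphs are the asserted 4442 pair. The step I expect to be the main obstacle is establishing the jellyfish relations: the quadratic tangles computation is intricate, and one must confirm both that the relations are consistent (they do not collapse $P$ to zero) and that they are complete (they reduce every closed diagram), so that the upper and lower dimension bounds actually coincide.
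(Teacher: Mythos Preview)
Your outline follows the paper's strategy closely through the construction phase: find candidate generators in the graph planar algebra $G(\Gamma)$, use quadratic tangles to derive 1-strand jellyfish relations, and invoke the jellyfish algorithm to conclude evaluability and hence that the generated subalgebra $P$ is a subfactor planar algebra. That part is fine, and your remark about positivity being inherited from $G(\Gamma)$ is exactly right (so ``consistency'' of the relations is automatic: they hold among explicit nonzero elements).

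The gap is in your final paragraph. You assert that the jellyfish argument ``bounds each box-space dimension from above by the count predicted by $\Gamma$'' and that the explicit generators give matching lower bounds, so $P$ has principal graph $\Gamma$. Neither bound works as stated. The embedding $P \subset G(\Gamma)$ only gives $\dim P_{k,\pm} \le \dim G(\Gamma)_{k,\pm}$, and the graph planar algebra box spaces are far larger than the box spaces of a subfactor with principal graph $\Gamma$; so this upper bound is useless. And knowing two generators in $P_{5,+}$ says nothing directly about $\dim P_{k,\pm}$ for $k>5$. The paper is explicit that the subfactor planar algebra generated by flat elements \emph{need not} have $\Gamma$ as its principal graph; it could, for instance, have smaller supertransitivity.

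What the paper actually does to pin down the principal graph is substantially more: first, the existence of 1-strand jellyfish relations together with the algebra property of $\{A,B,f^{(5)}\}$ forces the correct supertransitivity (their Lemma on supertransitivity). Then one solves for the minimal projections in $P_{5,+}$ one past the branch, checks their traces match the Frobenius--Perron dimensions of the 4442 vertices, and uses graph-norm obstructions (computing $\|\cdot\|$ for various candidate extensions and comparing with $\sqrt{3+\sqrt5}$) to rule out the arms merging, splitting, or carrying multiple edges, depth by depth, until the graph is forced to be 4442. The dual principal graph is obtained from a separate self-duality argument (exhibiting a planar isomorphism $A\leftrightarrow\check A$, $B\leftrightarrow\check B$ preserving all moments), and the dual data is fixed by enumerating compatible pairs and using that the dimension-one bimodules form a group. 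You should sketch this identification step rather than appealing to a dimension count that does not exist.
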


At this point, there are very strong number theoretic and combinatorial constraints on the possible principal graphs of subfactors. Indeed, when we find a graph that satisfies all these constraints, we reasonably suspect that there are actually subfactors with that principal graph. Nevertheless, the final step of actually constructing such subfactors remains very difficult.

It is known that every subfactor planar algebra embeds in the graph planar algebra of its principal graph \cite{MR2812459,tvc}. Thus, a standard approach to constructing a planar algebra is to identify some candidate elements in the appropriate graph planar algebra, then prove that the subalgebra they generate is a subfactor planar algebra with the desired principal graph. In many cases, it is not that difficult to obtain the candidate elements, for example, by solving certain polynomial equations in the graph planar algebra, or finding flat elements in the graph planar algebra with respect to some connection. Note that the existence of flat elements does not imply the connection itself is flat; the flat elements could generate a subfactor planar algebra with different principal graphs. 

Thus, given some candidate generators, the challenge is to show that these elements generate the desired planar algebra. An initial difficult step is to show that they generate an evaluable planar algebra; that is, every closed diagram is a multiple of the empty diagram (equivalently, the zero box space is 1-dimensional). Of course, if we obtained our elements as flat elements with respect to a connection, this step is automatic.
Once we have established the planar algebra is evaluable, we have some subfactor planar algebra. We need to identify its combinatorial invariants. Often, the analysis of some small projections in the subalgebra and some combinatorial arguments suffice to determine the principal graph.
 
In \cite{0909.4099}, Bigelow-Morrison-Peters-Snyder constructed the extended Haagerup planar algebra, which had long been expected to exist, with principal graphs
$$
\left(\bigraph{bwd1v1v1v1v1v1v1v1p1v1x0p0x1v1x0p0x1duals1v1v1v1v1x2v2x1}, \bigraph{bwd1v1v1v1v1v1v1v1p1v1x0p1x0duals1v1v1v1v1x2}\right).
$$
The essential insight was the jellyfish algorithm, introduced therein, which provides a powerful framework for proving a planar algebra is evaluable and has the desired supertransitivity (the length of the initial arm of the principal graph).

Suppose we have a set of elements in a planar algebra, each a lowest weight rotational eigenvector, which we are thinking of as generators.
A `jellyfish relation' is an identity in which the left hand side is simply a single generator with some number of strands between it and the starred point on the boundary, and the right hand side is some linear combination, in every term of which every generator is adjacent to the starred point on the boundary. In \cite{0909.4099}, there were two jellyfish relations (here $n=4$ corresponds to the Haagerup planar algebra, and $n=8$ corresponds to the extended Haagerup planar algebra): 
\begin{align*}
\boxjellyfishDiagram{n}{\check{S}} &= \frac{\sqrt{[n][n+2]}}{[n+1]}\boxquadraticDiagram{n}{S}{S}\\
\boxjellyfishSquared{n}{S} 
&= \frac{[2][2n+4]}{[n+1][n+2]}
\begin{tikzpicture}[baseline = -.9cm]
	\clip (-.7,-1.7)--(-.7,.7)--(4,.7)--(4,-1.7);
	\draw (0,-2)--(0,0);
	\draw (1.6,-2)--(1.6,0);
	\draw (3.2,-2)--(3.2,0);	
	\draw (0,0)--(3.2,0);
	\node at (.8,.15) {{\scriptsize{$n-1$}}};
	\node at (2.4,.15) {{\scriptsize{$n-1$}}};
	\node at (.4,-.6) {{\scriptsize{$n+1$}}};
	\node at (1.8,-.6) {{\scriptsize{$2$}}};
	\node at (3.6,-.6) {{\scriptsize{$n+1$}}};
	\draw[thick, unshaded] (0,0) circle (.4);
	\node at (0,0) {$S$};
	\draw[thick, unshaded] (1.6,0) circle (.4);
	\node at (1.6,0) {$S$};
	\draw[thick, unshaded] (3.2,0) circle (.4);
	\node at (3.2,0) {$S$};
	\node at (3.2,.55) {$\star$};
	\node at (1.6,.55) {$\star$};
	\node at (0,.55) {$\star$};
	\draw[thick, unshaded] (-.4,-1.5)--(-.4,-.8)--(3.6,-.8)--(3.6,-1.5)--(-.4,-1.5);
	\node at (1.6,-1.15) {$f^{(2 n+4)}$};
	\node at (-.55,-1.15) {$\star$};
\end{tikzpicture}
\end{align*}
These are actually \emph{box jellyfish relations}; it is easy enough to see that by expanding out the Jones-Wenzl idempotent on the left hand side in terms of Temperley-Lieb diagrams, and moving all non-identity terms to the right, that these relations become jellyfish relations as described above. See  Subsection \ref{sec:boxjellyfish} for more details.

In a $k$-strand jellyfish relation, the left hand side has $k$ strands above the generator. Above, we have a one-strand jellyfish relation for $\check{S}$, the Fourier transform, and a two-strand jellyfish relation for $S$.

A complete set of jellyfish relations is one such that by repeated application, we can rewrite any diagram as a linear combination of diagrams in which every generator is adjacent to the `outside' starred region. We picturesquely refer to this process as `the jellyfish algorithm': we gradually float all the jellyfish to the surface of the ocean, possibly creating new jellyfish along the way.
The pair of relations above is a complete set: the one-strand relation removes all the instances of $\check{S}$, then the two-strand relation allows us to float all the instances of $S$ to the surface.

It is typically easy to see that any closed diagram with all generators adjacent to the boundary is evaluable by iteratively finding an adjacent pair of generators which are connected by sufficiently many strands. The entire jellyfish algorithm is somewhat unusual amongst algorithms for simplifying a planar diagram; at intermediate steps, it requires making the diagram much more complicated.

In this paper, we present a systematic approach to identifying jellyfish relations using Jones' paper analyzing quadratic tangles \cite{math/1007.1158}. In particular, we show how to find all 1-strand jellyfish relations for which the right hand side involves at most two generators in each term. We find that for quite a number of potential principal graphs, the jellyfish relations obtained in this way constitute a complete set. This assures us that we have constructed some subfactor planar algebra, and a little separate work in each case identifies the principal graph as the intended graph.

It is worth noting, however, that this approach is far from uniformly successful! In \cite{0909.4099}, the authors needed 2-strand jellyfish relations. Indeed, a result of Bigelow and Penneys \cite{1208.1564} shows that having a complete set of 1-strand jellyfish relations implies that both the principal and dual principal graphs are spoke graphs. Thus by the triple point obstruction \cite{MR1317352,1007.2240}, any subfactor with principal graphs beginning with a triple point cannot have a complete set of 1-strand jellyfish relations. We anticipate the results of \cite{1208.1564} giving strong constraints on subfactors whose principal graphs are not both spokes. 

Even with these limitations, we do have a number of interesting examples. We give a jellyfish presentation of the new 4442 subfactor along with the 3311 subfactor \cite{MR999799}, the 2221 subfactor constructed in \cite{MR1832764} (reproducing all the work of Han's thesis \cite{1102.2052}, in an entirely automated fashion!), and one of the 3333 subfactors previously constructed by Izumi in unpublished work. These graphs appear in Figure \ref{fig:graphs}.

\begin{figure}[ht]
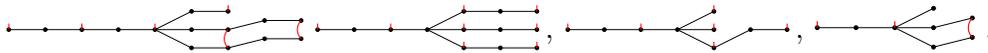

$$
\bigraph{bwd1v1v1v1v1p1p1v1x0x0p0x1x0p0x0x1v1x0x0p0x1x0v1x0p0x1duals1v1v1v2x1x3v2x1}
\bigraph{bwd1v1v1v1p1p1v1x0x0p0x1x0p0x0x1v1x0x0p0x1x0p0x0x1duals1v1v1x2x3v1x2x3},
\bigraph{bwd1v1v1v1p1p1v1x0x0v1duals1v1v1x2x3v1},
\bigraph{bwd1v1v1p1p1v1x0x0p0x1x0duals1v1v2x1}\,.
$$
\caption{The principal graphs of the four subfactors we construct in this paper. In each case, the dual principal graph is the same as the principal graph.}
\label{fig:graphs}
\end{figure}

Immediately after hearing about our construction of a 4442 subfactor, Izumi noticed that a 4442 subfactor can be constructed using a $\Integer/3\Integer$ quotient of the 3333 subfactor. This will appear in one of his forthcoming papers. Conversely, we constructed the 3333 subfactor only after hearing Izumi's  construction.

In addition to giving a generators and jellyfish relations presentation of each of these four subfactor planar algebras, we show that the 4442, 3333, and 2221 subfactors are each self-dual, and moreover symmetrically self-dual. This essentially means that one can ignore the shading in the planar algebras, and thus there exist fusion categories with the same principal graphs. We will investigate this further in a future paper \cite{4442equi}.

\paragraph{\textbf{Acknowledgements.}}
The authors would like to thank Vaughan Jones and Masaki Izumi for many helpful conversations. We appreciated the referee's careful reading of the manuscript and many helpful criticisms. Both authors were supported by DOD-DARPA grants HR0011-11-1-0001 and  HR0011-12-1-0009. Scott Morrison was at the Miller Institute for Basic Research and the Australian National University while writing this paper, and in part supported by the ARC DECRA grant DE120100232. David Penneys was also supported by NSF grant DMS-0856316.

\subsection{The {\tt FusionAtlas}}
This paper relies on some substantial calculations. In particular, our efforts to find the generators in the various graph planar algebras made use of a variety of techniques, some ad-hoc, some approximate, and some computationally expensive. This paper essentially does not address that work. Instead, we merely present the discovered generators and verify some relatively easy facts about them. In particular, the proofs presented in this paper rely on the computer in a much weaker sense. We need to calculate certain numbers of the form $\Tr(STR)$, where $S$, $T$, and $R$ are rather large martrices, and the computer does this for us. We also entered all the formulas derived in this paper into {\tt Mathematica}, and had the computer automatically evaluate the various quantities which appear in our derivation of jellyfish relations. As a reader may be interested in seeing these programs, we include a brief instruction on finding and running these programs.

The {\tt arXiv} sources of this article contain in the {\tt code} subdirectory a number of files, including:
\begin{itemize}
\item {\tt Generators.nb}, which reconstructs the generators from our terse descriptions of them in Appendix \ref{sec:generators}.
\item {\tt QuadraticTangles.nb}, which calculates the requisite moments of these generators, and performs the linear algebra necessary to derive the jellyfish relations.
\item {\tt GenerateLaTeX.nb}, which typesets each subsection of Section \ref{sec:relations} for each planar algebra, and many mathematical expressions in Appendices \ref{sec:generators} and \ref{sec:moments}.
\end{itemize}

The {\tt Mathematica} notebook {\tt Generators.nb} can be run by itself. The final cells of that notebook write the full generators to the disk; this must be done before running {\tt QuadraticTangles.nb}. The {\tt QuadraticTangles.nb} notebook relies on the {\tt FusionAtlas}, a substantial body of code the authors have developed along with  Emily Peters, Noah Snyder and James Tener to perform calculations with subfactors and fusion categories. To obtain a local copy, you first need to ensure that you have {\tt Mercurial}, the distributed version control system, installed on your machine. With that, the command 
\begin{quote}
{\tt hg clone https://bitbucket.org/fusionatlas/fusionatlas}
\end{quote}
will create a local directory called {\tt fusionatlas} containing the latest version. In the {\tt QuadraticTangles.nb} notebook, you will then need to adjust the paths appearing in the first input cell to ensure that your local copy is included. After that, running the entire notebook reproduces all the calculations described below.

We invite any interested readers to contact us with questions or queries about the use of these notebooks or the {\tt FusionAtlas} package.

\section{Background}

\subsection{Graph planar algebra embedding}\label{sec:FindGenerators}
A finite depth subfactor planar algebra embeds in the graph planar algebra of its principal graph \cite{MR2812459,tvc}. We begin by assuming that the desired planar algebra is $n-1$ supertransitive, so $P_{k,+} = TL_{k,+}$ for $k=0,\ldots, n-1$ and is generated by the orthogonal complement $P_{n,+} \ominus TL_{n,+}$ of the Temperley-Lieb algebra in the $n$-box space. If this orthogonal complement has dimension $k$, we say that the planar algebra is $n-1$ supertransitive of excess $k$. The excess can be read off from the principal graph; it's the sum of the squares of the multiplicities of the edges immediately beyond the branch point, minus one (or simply two less than the valence of the branch point, in the simply laced case).

To construct the subfactor planar algebras in these papers, we first find a \underline{connection} on the desired pair of principal graphs $(\Gamma,\Gamma')$. We do so by first computing a numerical approximation to high precision, then guessing the exact solution using {\tt Mathematica}'s {\tt RootApproximant} function, and finally verifying using exact arithmetic in a number field that the candidate exact solution satisfies the necessary equations. As we use this numerical method, we cannot say with certainty how many connections exist on these graphs. However, Izumi has determined all such connections, and we believe our list of possible connections coincides with his. As the main goal of this paper is to construct the new 4442 subfactor by the development of methods for deriving jellyfish relations, rather than proving uniqueness, we restrict our attention to a single connection.

Using this connection, it is possible to discover the rotational eigenvalues of the desired generators via  \cite[Theorem 1.7]{1109.3190}; in a certain gauge, $\tr{UU^t} = 2 + \sum \omega$, where $U$ is the `branch matrix' of connection entries going through the branch point, and the sum is over the $k$ rotational eigenvalues $\omega$, with multiplicity, of the lowest weight vectors in the $n$-box space.  We then compute the equations for flatness and find $k$ flat $n$-boxes with respect to the connection. (Note that these flat elements are not necessarily self-adjoint as in \cite{math/1007.1158}.)

However, merely having flat elements for a connection does not imply that the connection itself is flat. 
The planar algebra generated by the flat $n$-boxes is necessarily evaluable, and indeed a subfactor planar algebra, which we denote by $P_\bullet^\Gamma$. 
Thus we have constructed some subfactor by \cite{MR996454,MR1642584,JonesPANotes,tvc}.
This subfactor, however, need not have the principal graphs we started with; indeed, it could have smaller supertransitivity.
Determining even the supertransitivity of the principal graph of the subfactor planar algebra generated by the flat $n$-boxes requires some knowledge of the skein theory of the planar algebra, which is of comparable difficulty with directly computing jellyfish relations. In fact, by Lemma \ref{lem:Supertransitive}, having jellyfish relations ensures the supertransitivity is correct.
One could attempt to verify the connection itself is flat, which would also ensure the supertransitivity is correct, but this is extremely computationally expensive. 
The point of this paper is to avoid this work using planar algebras and the jellyfish algorithm.

Hence, we still need to analyze the subfactor planar algebra $P_\bullet^\Gamma$. In the analysis which follows, we never rely on the fact that we obtained these elements as flat elements with respect to some connection. In particular, nothing that follows relies on the correctness of the method described above for guessing the generators!

First, we need to calculate the principal graphs of $P_\bullet^\Gamma$ (which we are hoping are the graphs we started with, $(\Gamma, \Gamma')$). Second, we would like to know how to evaluate closed diagrams in the generators.

For the spoke graphs we consider in this paper, $\Gamma$ and $\Gamma'$ always coincide, and are drawn from the set $\{4442,3333,3311,2221\}$ (see Figure \ref{fig:graphs}), where the numbers refer to the length of the spokes emanating from the central vertex.
By \cite{1208.1564}, if such subfactor planar algebras were to exist with the desired principal graphs, they would have \emph{1-strand jellyfish generators} at depth $n$. We use Jones' quadratic tangles techniques \cite{math/1007.1158} to find these 1-strand jellyfish relations in $P_\bullet^\Gamma$. We then use the generators to determine that $P_\bullet^\Gamma$ has the correct principal graph.

\subsection{Quadratic tangles}

In \cite{math/1007.1158}, Jones uses quadratic tangles techniques to prove a number of formulas about planar generators in a subfactor planar algebra. In fact, many of the formulas there hold in more generality.

\begin{nota}
Recall that the Fourier transform $\cF$ is given by
$$
\cF=
\begin{tikzpicture}[baseline = 0cm]
	\clip (0,0) circle (1cm);
	\draw[] (0,0) circle (1cm);
	\draw[] (0,0)--(-10:1.1cm) arc (-10:30:1.1cm) --(0,0); 
	\draw[] (0,0)--(70:1.1cm) arc (70:110:1.1cm) --(0,0); 
	\draw[] (0,0)--(150:1.1cm) arc (150:190:1.1cm) --(0,0); 
	\draw[] (0,0)--(230:1.1cm) arc (230:310:1.1cm) --(0,0); 
	\draw[thick, unshaded] (0,0) circle (.4cm);
	\node at (90:.52) {$\star$};
	\node at (130:.86) {$\star$};
	\node at (0,-.6) {$\cdots$};
	\node at (0,0) {};
	\draw[ultra thick] (0,0) circle (1cm);
\end{tikzpicture}\,.
$$
For a rotational eigenvector $S\in P_{n,\pm}$ corresponding to an eigenvalue $\omega_S=\sigma_S^2$, we define another rotational eigenvector $\check{S}\in P_{n,\mp}$ by $\check{S}=\sigma_S^{-1}\cF(S)$. Note that $\cF(\check{S})=\sigma_S S$, so $\check{\check{S}}=S$.
\end{nota}

\begin{defn}
Suppose $P_\bullet$ is a not necessarily evaluable $*$-planar algebra, i.e., we don't know if $\dim(P_{n,\pm})<\infty$ or if $\dim(P_{0,\pm})=1$. We call $P_\bullet$ \underline{unitary} if for all $n\geq 0$, the $P_{0,\pm}$-valued sesquilinear form on $P_{n,\pm}$ given by $\langle x,y\rangle=\Tr(y^*x)$ is positive definite (in the operator-valued sense).
\end{defn}

\begin{defn}
Suppose $P_\bullet$ is a unitary, spherical, shaded planar algebra with modulus $\delta>2$ which is not necessarily evaluable.
A finite  set $\fB\subset P_{n,+}$ is called a \emph{set of generators} if the elements of $\fB$ are linearly independent, self-adjoint, low-weight eigenvectors for the rotation, i.e, for all $S\in\fB$,
\begin{itemize}
\item $S=S^*$,
\item $S$ is uncappable, and
\item $\rho(S)=\omega_S S$ for some $n$-th root of unity $\omega_S$.
\end{itemize}
Given a set of generators $\fB$, we get a set of dual generators $\check{\fB}=\set{\check{S}}{S\in\fB}$. 

We say a set of generators $\fB$ has \emph{scalar moments} if $\Tr(R),\Tr(RS),\Tr(RST)$ and $\Tr(\check{R}),\Tr(\check{R}\check{S}),\Tr(\check{R}\check{S}\check{T})$ are scalar multiples of the empty diagram in $P_{0,+}$ and $P_{0,-}$ respectively for each $R,S,T\in\fB$. 


If a set of generators $\fB$ has scalar moments, we say it is \emph{orthonormal} if for all $S,T\in \fB$, $\langle S,T\rangle =\Tr(ST)= \delta_{S,T}$.
\end{defn}

\begin{nota}
Since $\delta>2$, given a generator $R\in\fB$, the annular tangles
$$
\underset{\cup_0(R)}{
\begin{tikzpicture}[baseline = 0cm]
	\clip (0,0) circle (1cm);
	\draw[shaded] (0,0) circle (1cm);
	\draw[unshaded] (.3,1) arc (0:-180:.3cm);
	\draw[unshaded] (0,0)--(130:1.1cm) arc (130:160:1.1cm) --(0,0); 
	\draw[unshaded] (0,0)--(190:1.1cm) arc (190:220:1.1cm) --(0,0); 
	\draw[unshaded] (0,0)--(50:1.1cm) arc (50:20:1.1cm) --(0,0); 
	\draw[unshaded] (0,0)--(-10:1.1cm) arc (-10:-40:1.1cm) --(0,0); 
	\draw[thick, unshaded] (0,0) circle (.4cm);
	\node at (90:.52) {$\star$};
	\node at (90:.86) {$\star$};
	\node at (0,-.6) {$\cdots$};
	\node at (0,0) {$\check{R}$};
	\draw[ultra thick] (0,0) circle (1cm);
\end{tikzpicture}
}
\,,\,
\underset{\cup_1(R)}{
\begin{tikzpicture}[baseline = 0cm]
	\clip (0,0) circle (1cm);
	\draw[unshaded] (0,0) circle (1cm);
	\draw[shaded] (.3,1) arc (0:-180:.3cm);
	\draw[shaded] (0,0)--(130:1.1cm) arc (130:160:1.1cm) --(0,0); 
	\draw[shaded] (0,0)--(190:1.1cm) arc (190:220:1.1cm) --(0,0); 
	\draw[shaded] (0,0)--(50:1.1cm) arc (50:20:1.1cm) --(0,0); 
	\draw[shaded] (0,0)--(-10:1.1cm) arc (-10:-40:1.1cm) --(0,0); 
	\draw[thick, unshaded] (0,0) circle (.4cm);
	\node at (90:.52) {$\star$};
	\node at (117:.86) {$\star$};
	\node at (0,-.6) {$\cdots$};
	\node at (0,0) {${R}$};
	\draw[ultra thick] (0,0) circle (1cm);
\end{tikzpicture}
}
\,,\,
\underset{\cup_2(R)}{
\begin{tikzpicture}[baseline = 0cm]
	\clip (0,0) circle (1cm);
	\draw[shaded] (0,0) circle (1cm);
	\draw[unshaded] (.3,1) arc (0:-180:.3cm);
	\draw[unshaded] (0,0)--(130:1.1cm) arc (130:160:1.1cm) --(0,0); 
	\draw[unshaded] (0,0)--(190:1.1cm) arc (190:220:1.1cm) --(0,0); 
	\draw[unshaded] (0,0)--(50:1.1cm) arc (50:20:1.1cm) --(0,0); 
	\draw[unshaded] (0,0)--(-10:1.1cm) arc (-10:-40:1.1cm) --(0,0); 
	\draw[thick, unshaded] (0,0) circle (.4cm);
	\node at (90:.52) {$\star$};
	\node at (144:.86) {$\star$};
	\node at (0,-.6) {$\cdots$};
	\node at (0,0) {$\check{R}$};
	\draw[ultra thick] (0,0) circle (1cm);
\end{tikzpicture}
}
\,,\,
\underset{\cup_3(R)}{
\begin{tikzpicture}[baseline = 0cm]
	\clip (0,0) circle (1cm);
	\draw[unshaded] (0,0) circle (1cm);
	\draw[shaded] (.3,1) arc (0:-180:.3cm);
	\draw[shaded] (0,0)--(130:1.1cm) arc (130:160:1.1cm) --(0,0); 
	\draw[shaded] (0,0)--(190:1.1cm) arc (190:220:1.1cm) --(0,0); 
	\draw[shaded] (0,0)--(50:1.1cm) arc (50:20:1.1cm) --(0,0); 
	\draw[shaded] (0,0)--(-10:1.1cm) arc (-10:-40:1.1cm) --(0,0); 
	\draw[thick, unshaded] (0,0) circle (.4cm);
	\node at (90:.52) {$\star$};
	\node at (177:.86) {$\star$};
	\node at (0,-.6) {$\cdots$};
	\node at (0,0) {${R}$};
	\draw[ultra thick] (0,0) circle (1cm);
\end{tikzpicture}
}
\,,\dots
$$
are a basis for $\gA_{n+1}(R)$, the annular consequences of $R$ inside $P_{n+1,+}$ \cite{MR1929335,math/1007.1158}. One calculates the dual annular basis $\set{\widehat{\cup}_i(R)}{i=0,\dots,2n+1}$ from $\langle \widehat{\cup}_i(R),\cup_j(R)\rangle = \delta_{i,j}$, where the inner product is linear on the \emph{right}. See Definition 4.2.6 of \cite{math/1007.1158} for an explicit formula.
\end{nota}

\begin{thm}
All the formulas of \S 4 of  \cite{math/1007.1158} hold in any unitary, spherical, shaded planar algebra with modulus $\delta>2$ for any orthonormal set of generators $\fB$ with scalar moments.
\end{thm}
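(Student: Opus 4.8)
The plan is to treat this as a verification that Jones' arguments in \S 4 of \cite{math/1007.1158} never invoke more than the four hypotheses in the statement---unitarity, sphericality, modulus $\delta>2$, and an orthonormal generating set with scalar moments---rather than the full axioms of an evaluable subfactor planar algebra. So the proof is essentially a structured audit of that section, isolating exactly where each global assumption is used and checking that our weaker hypotheses cover it.

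First I would isolate the three structural inputs on which Jones' formulas rest. For $\delta>2$, the annular consequences $\gA_{n+1}(R)$ of a low-weight rotational eigenvector $R$ have the tangles $\cup_i(R)$ as a basis; this is the content of \cite{MR1929335} (annular Temperley--Lieb at $\delta>2$) and uses nothing about evaluability. Next, unitarity makes the sesquilinear form $\langle x,y\rangle = \Tr(y^*x)$ positive definite, which is exactly what is needed for the dual annular basis $\widehat{\cup}_i(R)$ to exist and to be computed from $\langle \widehat{\cup}_i(R),\cup_j(R)\rangle = \delta_{i,j}$. Finally, sphericality enters precisely in the rotational and trace manipulations. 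Having pinned these down, most of the formulas in \S 4 become linear-algebra identities inside the finite-dimensional annular module generated by a single $R$ (or by a pair $R,S$), and these transfer verbatim.

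The one place where evaluability is genuinely used in the original is that closing up a diagram produces an honest scalar. This is exactly what the scalar-moments hypothesis is engineered to replace: wherever a term of the form $\Tr(R)$, $\Tr(RS)$, or $\Tr(RST)$ (or a checked counterpart) appears, we now obtain a scalar multiple of the empty diagram in $P_{0,\pm}$, which can be manipulated identically. Thus the substantive step is to confirm that the only closed diagrams arising anywhere in the derivations of \S 4 are moments of at most three generators, so that the definition of scalar moments---crafted to list exactly $\Tr(R)$, $\Tr(RS)$, $\Tr(RST)$ and their duals---is sufficient. I would proceed formula by formula and record, for each, which moment it produces, using orthonormality of $\fB$ to reuse Jones' normalizations.

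The main obstacle I anticipate is precisely this bookkeeping: verifying that no derivation secretly requires a four-fold (or longer) moment, nor the finite-dimensionality of a full box space $P_{k,\pm}$, but only the automatically finite-dimensional annular consequences of one or two generators. Once that is confirmed, each formula follows by repeating Jones' computation in the abstract setting; no new idea is needed beyond careful tracking of hypotheses. The theorem is really the assertion that Jones' proofs are more robust than their original formulation, and the work lies entirely in making that robustness explicit.
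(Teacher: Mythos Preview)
Your proposal is correct and takes essentially the same approach as the paper: both amount to observing that, on rereading Jones' \S 4, the only role of evaluability is to render the relevant closed diagrams scalar, and that the scalar-moments hypothesis on at most cubic traces of generators (and their duals) is exactly what is needed. The paper's own proof is in fact far terser than yours---it simply asserts that ``upon reading through the proofs, it is clear that the generators having scalar moments is sufficient''---so your structured audit is a more careful version of the same verification.
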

\begin{proof}
Jones explicitly restricts to an evaluable planar algebra (in fact, he says `subfactor,' but evaluable is the only condition we're now leaving off) before proving these formulas, but upon reading through the proofs, it is clear that the generators having scalar moments is sufficient to work in the generality we need here.
\end{proof}

\subsection{Spherical and lopsided planar algebras}

In \cite{1205.2742}, Morrison and Peters describe the \emph{spherical} and \emph{lopsided} conventions for planar algebras. In the spherical convention, both shaded and unshaded contractible closed loops count for a multiplicative factor of $\delta$; however, in the lopsided convention, shaded contractible closed loops count for $1$ while unshaded contractible closed loops count for $\delta^2$. The main advantage of working in the lopsided planar algebra is that there are fewer square roots, so arithmetic is easier.  In particular the number field in which we calculate is much smaller. Hence we use the lopsided planar algebra to compute the moments of our generators.

The map $\natural \colon P_\bullet^{spherical}\to P_\bullet^{lopsided}$ from \cite{1205.2742}  is not a planar algebra map, but it commutes with the action of the planar operad up to certain scalars. When we draw our tangles in the standard form where each input and output disk is a rectangle with the distinguished interval on the left and the same number of strings attach to the top and bottom of each rectangle, then there is a power of $\delta^{\pm1}$ for each critical point which is shaded above, and the power of $\delta$ corresponds to the sign of the critical point:
$$
\begin{tikzpicture}[baseline = -.15cm]
	\fill[unshaded] (-.5,-.5)--(-.5,.25)--(.5,.25)--(.5,-.5);
	\draw[shaded] (.25,.25)--(.25,0) arc (0:-180:.25cm) --(-.25,.25);
\end{tikzpicture}
\longleftrightarrow \delta
\hspace{1.5cm}
\begin{tikzpicture}[baseline = -.15cm]
	\clip (-.5,-.25)--(-.5,.5)--(.5,.5)--(.5,-.25);
	\fill[shaded] (-.5,-.5)--(-.5,.5)--(.5,.5)--(.5,-.5);
	\draw[unshaded] (-.25,-.5)--(-.25,0) arc (180:0:.25cm) --(.25,-.5);
\end{tikzpicture}
\longleftrightarrow \delta^{-1}.
$$

\begin{ex}\label{ex:FourierTransform}
We will work out the correction factor arising when commuting $\natural$ and $\cF$. Note that
$$
\cF(S)
=
\begin{tikzpicture}[baseline = 0cm]
	\clip (0,0) circle (1cm);
	\draw[shaded] (0,0) circle (1cm);
	\draw[unshaded] (0,0)--(-10:1.1cm) arc (-10:30:1.1cm) --(0,0); 
	\draw[unshaded] (0,0)--(70:1.1cm) arc (70:110:1.1cm) --(0,0); 
	\draw[unshaded] (0,0)--(150:1.1cm) arc (150:190:1.1cm) --(0,0); 
	\draw[unshaded] (0,0)--(230:1.1cm) arc (230:310:1.1cm) --(0,0); 
	\draw[thick, unshaded] (0,0) circle (.4cm);
	\node at (90:.52) {$\star$};
	\node at (130:.86) {$\star$};
	\node at (0,-.6) {$\cdots$};
	\node at (0,0) {$S$};
	\draw[ultra thick] (0,0) circle (1cm);
\end{tikzpicture}
=
\begin{cases}
\begin{tikzpicture}[baseline = 0cm]
	\clip (-1.4,.8)--(1.4,.8)--(1.4,-.8)--(-1.4,-.8);
	\draw[shaded] (-.6,0)--(-.6,-.4) arc (0:-180:.2cm)--(-1,.9)--(-1.5,.9)--(-1.5,-.9)--(-.4,-.9)--(-.4,0);
	\draw[shaded] (-.6,0)--(-.6,.9)--(-.4,.9)--(-.4,0);
	\draw[shaded] (.6,0)--(.6,-.9)--(.4,-.9)--(.4,0);
	\draw[shaded] (.6,0)--(.6,.4) arc (180:0:.2cm)--(1,-.9)--(1.5,-.9)--(1.5,.9)--(.4,.9)--(.4,0);
	\draw[thick,unshaded] (-.8,.4)--(.8,.4)--(.8,-.4)--(-.8,-.4)--(-.8,.4);
	\draw[ultra thick] (-1.4,.8)--(1.4,.8)--(1.4,-.8)--(-1.4,-.8)--(-1.4,.8);
	\node at (0,0) {$S$};
	\node at (0,.6) {$\cdots$};
	\node at (0,-.6) {$\cdots$};
\end{tikzpicture}
&\text{if $n$ is even}
\\
\\
\begin{tikzpicture}[baseline = 0cm]
	\clip (-1.4,.8)--(1.4,.8)--(1.4,-.8)--(-1.4,-.8);
	\draw[shaded] (-.6,0)--(-.6,-.4) arc (0:-180:.2cm)--(-1,.9)--(-1.5,.9)--(-1.5,-.9)--(-.4,-.9)--(-.4,0);
	\draw[shaded] (-.6,0)--(-.6,.9)--(-.4,.9)--(-.4,0);
	\draw (.4,-.9)--(.4,.9);
	\draw[shaded] (.6,0)--(.6,.4) arc (180:0:.2cm)--(1,-.9)--(.6,-.9)--(.6,0);
	\draw[thick,unshaded] (-.8,.4)--(.8,.4)--(.8,-.4)--(-.8,-.4)--(-.8,.4);
	\draw[ultra thick] (-1.4,.8)--(1.4,.8)--(1.4,-.8)--(-1.4,-.8)--(-1.4,.8);
	\node at (0,0) {$S$};
	\node at (0,.6) {$\cdots$};
	\node at (0,-.6) {$\cdots$};
\end{tikzpicture}
&\text{if $n$ is odd.}
\end{cases}
$$
Hence we have 
$$
\natural \cF = 
\begin{cases}
\cF \natural & \text{if $n$ is even} \\
\delta^{-1} \cF \natural & \text{if $n$ is odd.} 
\end{cases}
$$
\end{ex}

\begin{ex}\label{ex:CorrectionFactors}
Similarly one can calculate that
\begin{align*}
\Tr(S) =\natural \Tr(S) &=
\begin{cases}
\Tr(\natural S) & \text{ if $n$ is even}\\
\delta\Tr(\natural S) & \text{ if $n$ is odd}
\end{cases}
\\
\Tr(\check{S})=\natural \Tr(\check{S}) &=
\begin{cases}
\Tr(\natural \check{S}) & \text{ if $n$ is even}\\
\delta^{-1}\Tr(\natural \check{S}) & \text{ if $n$ is odd.}
\end{cases}
\end{align*}
Hence if $S_1,\dots, S_k\in\fB$, we compute the moment
$$
\Tr(\cF(S_1)\cdots \cF(S_k))=
\begin{cases}
\delta^{-k} \Tr(\cF(\natural S_1)\cdots \cF(\natural S_k)) & \text{ if $n$ is even}\\
\delta^{-1} \Tr(\cF(\natural S_1)\cdots \cF(\natural S_k)) & \text{ if $n$ is odd}
\end{cases}
$$
which implies
$$
\Tr(\check{S_1}\cdots \check{S_k})
=
\begin{cases}
\delta^{-k} \Tr(\natural\check{S_1}\cdots \natural\check{S_k}) & \text{ if $n$ is even}\\
\delta^{-1} \Tr(\natural\check{S_1}\cdots \natural\check{S_k}) & \text{ if $n$ is odd.}
\end{cases}
$$
\end{ex}

\subsection{The jellyfish algorithm}

The \emph{jellyfish algorithm} was invented in \cite{0909.4099} to construct the extended Haagerup subfactor planar algebra with principal graphs
$$
\left(\bigraph{bwd1v1v1v1v1v1v1v1p1v1x0p0x1v1x0p0x1duals1v1v1v1v1x2v2x1}, \bigraph{bwd1v1v1v1v1v1v1v1p1v1x0p1x0duals1v1v1v1v1x2}\right).
$$
One uses the jellyfish algorithm to evaluate closed diagrams on a set of generators. There are two ingredients:
\begin{enumerate}
\item[(1)] The generators in $P_{n,\pm}$ satisfy \emph{jellyfish relations}, i.e., for each generator $S,T$,
$$
j(\check{S})=
\jellyfishDiagram{n}{\check{S}}
\,,\, 
j^2(S')= 
\jellyfishSquared{n}{T}
$$
can be written as linear combinations of \emph{trains}, which are diagrams where any region meeting the distinguished interval of a generator meets the distinguished interval of the external disk, e.g.,
$$
\begin{tikzpicture}[baseline = -.3cm]
	\draw (-2,-1.2)--(3,-1.2);
	\draw (-1,0)--(-1,-1);
	\draw (0,0)--(0,-1);
	\draw (2,0)--(2,-1);
	\filldraw[thick, unshaded] (-1.5,-.8)--(-1.5,-1.6)--(2.5,-1.6)--(2.5,-.8)--(-1.5,-.8);
	\draw[thick, unshaded] (-1,0) circle (.4);
	\draw[thick, unshaded] (0,0) circle (.4);	
	\draw[thick, unshaded] (2,0) circle (.4);
	\node at (-1.7,-.8) {$\star$};
	\node at (-1.3,.5) {$\star$};	
	\node at (-.3,.5) {$\star$};
	\node at (1.7,.5) {$\star$};
	\node at (1,0) {$\cdots$};
	\node at (.5,-1.2) {$\cT$};
	\node at (-1,0) {$S_1$};
	\node at (0,0) {$S_2$};
	\node at (2,0) {$S_\ell$};
	\node at (-1.7,-1.4) {{\scriptsize{$k$}}};
	\node at (2.7,-1.4) {{\scriptsize{$k$}}};
	\node at (-1.2,-.6) {{\scriptsize{$2n$}}};
	\node at (-.2,-.6) {{\scriptsize{$2n$}}};
	\node at (1.8,-.6) {{\scriptsize{$2n$}}};
\end{tikzpicture}
$$
where $S_1,\dots, S_\ell$ are generators, and $\cT$ is a single Temperley-Lieb diagram.

(Note that $j(S),j(\check{S})$ means the same thing as $\cup_0(S),\cup_0(\check{S})$, but we will use the $j$ notation to emphasize its importance to the jellyfish algorithm.)
\item[(2)] The generators in $P_{n,\pm}$ are uncappable and together with the Jones-Wenzl projection $f^{(n)}$ form an algebra under the usual multiplication 
$$
ST=
\begin{tikzpicture}[baseline = .8cm]
	\draw (0,2.2)--(0,-.4);	
	\filldraw[unshaded,thick] (0,1.5) circle (.4cm);
	\node at (0,1.5) {$T$};
	\node at (-.55,1.5) {$\star$};
	\filldraw[unshaded, thick] (0,.3) circle (.4cm);
	\node at (0,.3) {$S$};
	\node at (-.55,.3) {$\star$};
	\node at (-.2,.9) {{\scriptsize{$n$}}};
	\node at (-.2,-.2) {{\scriptsize{$n$}}};
	\node at (-.2,2) {{\scriptsize{$n$}}};
\end{tikzpicture}
=
\sum_{R}
\alpha^R_{S,T}
\begin{tikzpicture}[baseline = -.1cm]
	\draw (0,.8)--(0,-.8);
	\filldraw[unshaded,thick] (0,0) circle (.4cm);
	\node at (0,0) {$R$};
	\node at (-.55,0) {$\star$};
	\node at (-.2,.6) {{\scriptsize{$n$}}};
	\node at (-.2,-.6) {{\scriptsize{$n$}}};
\end{tikzpicture}\,.
$$
(Note that the {\texttt{Mathematica}} package {\texttt{FusionAtlas}} multiplies in this order; reading from left to right in products corresponds to reading from bottom to top in planar composites.)
\end{enumerate}
Given these two ingredients, one can evaluate any closed diagram using the following two step process.
\begin{enumerate}
\item[(1)] Pull all generators $S$ to the outside of the diagram using the jellyfish relations, possibly getting diagrams with more $S$'s.
\item[(2)] Use uncappability and the algebra property to iteratively reduce the number of generators. Any non-zero train which is a closed diagram is either a Temperley-Lieb diagram, has a capped generator, or has two generators $S,T$ connected by at least $n$ strings, giving $ST$.
\end{enumerate}

Section \ref{sec:ComputingJellyfishRelations} is devoted to our procedure for computing the jellyfish relations necessary for the first part of the jellyfish algorithm, while the second part is rather easy. 

One can see that if $\{A,B,f^{(n)}\}$ span a subalgebra of $P_{n,+}$, their structure coefficients must be given by
$$
\alpha_{S,T}^R = \frac{\Tr(STR)}{\Tr(R^2)},
$$
and thus determined by the moments given in Appendix \ref{sec:moments}. We check that the algebra generated by $\{A,B,f^{(n)}\}$ is closed under multiplication directly in the graph planar algebra, in Lemma \ref{lem:check-structure-coefficients}.

Note that if we have an orthonormal set of generators, then in the notation of \cite{math/1007.1158},
$\alpha_{S,T}^R = \Tr(STR)=a^{ST}_R$.
There are similar easy calculations to determine the structure coefficients $\beta_{S,T}^R$ of the algebra generated by $\{\check{A},\check{B},\check{f}^{(n)}\}\subset P_{n,-}$, and in the orthonormal case, $\beta_{S,T}^R=b^{ST}_R$.

In \cite{0909.4099}, they found \emph{2-strand jellyfish relations}, i.e., $j(\check{S})$ and $j^2(S)$ lie in the span of the trains of $S$.
For spoke subfactors, we can find \emph{1-strand jellyfish relations} by \cite{1208.1564}, i.e., for each $S\in \fB$ and $\check{S}\in\check{\fB}$, $j(\check{S})$ lies in the span of the trains from $\fB$, and $j(S)$ lies in the span of the trains from $\check{\fB}$. 
These 1-strand jellyfish relations are sufficient to evaluate all closed diagrams from our generators, and thus $\fB$ generates some subfactor planar algebra. 

We need further arguments to prove that the resulting subfactor planar algebra has the desired principal graphs. 
It turns out that at the relatively low index of $3+\sqrt{5}$ these arguments are easy.

\subsection{Jellyfish to box jellyfish and back again}\label{sec:boxjellyfish}

Given a set of jellyfish relations, we may write them in a more compact form in which we multiply the diagrams by a Jones-Wenzl idempotent to get rid of simpler diagrams. We present the arguments back and forth in the case of one generator, and it is clear how to generalize to the multi-generator case.

If we know
$$
j(\check{S}) = \jellyfishDiagram{n}{\check{S}} = \gamma_{S,S}\quadraticDiagram{n}{S}{S} + \sum_{i=1}^{2n+1} \gamma_{S,i} \cup_i'(S) + X
$$
where $X\in TL_{n+1,+}$ with all strings turned down, and
$$
\cup_1'(S)= 
\begin{tikzpicture}[baseline = -.3cm]
	\draw (0,0)--(0,-.8);
	\draw (-.6,-.8) arc (180:0:.2cm);
	\node at (.2,-.6) {{\scriptsize{$2 n$}}};
	\draw[thick, unshaded] (0,0) circle (.4);
	\node at (0,0) {$S$};
	\node at (0,.55) {$\star$};
\end{tikzpicture}
\,,\,
\cup_i'(S)= 
\begin{tikzpicture}[baseline = -.3cm]
	\draw (-.2,0)--(-.6,-.8);
	\draw (.2,0)--(.6,-.8);
	\draw (-.2,-.8) arc (180:0:.2cm);
	\node at (-.9,-.6) {{\scriptsize{$i-1$}}};
	\node at (1.3,-.6) {{\scriptsize{$2n-i+1$}}};
	\draw[thick, unshaded] (0,0) circle (.4);
	\node at (0,0) {$S$};
	\node at (0,.55) {$\star$};
\end{tikzpicture}
\,,\text{ and }
\cup_{2n+1}'(S)= 
\begin{tikzpicture}[baseline = -.3cm]
	\draw (0,0)--(0,-.8);
	\draw (.2,-.8) arc (180:0:.2cm);
	\node at (-.2,-.6) {{\scriptsize{$2 n$}}};
	\draw[thick, unshaded] (0,0) circle (.4);
	\node at (0,0) {$S$};
	\node at (0,.55) {$\star$};
\end{tikzpicture}
$$
(note that the $\cup'_i(S)$'s for $i=1,\dots, 2n+1$ can be obtained from the $\cup_j(S)$'s for $j=1,\dots,2n+1$ by applying suitable powers of the rotation and multiplying by suitable powers of $\sigma_S$),
applying the Jones-Wenzl $f^{(2n+2)}$ to the bottom of the diagram gives the simpler \emph{box jellyfish relation}
$$
f\cdot j(\check{S}) = \boxjellyfishDiagram{n}{\check{S}} = \gamma_{S,S}\boxquadraticDiagram{n}{S}{S}.
$$

Conversely, given a box jellyfish relation of the above form, we get a jellyfish relation by expanding the Jones-Wenzl idempotents.
\begin{enumerate}
\item[(1)] First, note that the coefficient of the identity in the Jones-Wenzl idempotent $f^{(2n+2)}$ is 1. This term gives us a $j(\check{S})$ on the left hand side and an $S\circ S$ on the right hand side.
\item[(2)] On the left hand side, all other terms of the Jones-Wenzl either cap off the $\check{S}$, giving zero, or there is exactly one cup on the top and one cap on the bottom, and the cup on the top is between the $1$-st and $2$-nd strings or the $(2n+1)$-th and $(2n+2)$-th strings. In this case, we get a scalar multiple of an annular consequence of $S$ (which is not $j(\check{S})$), and these terms can be subtracted off to the right hand side of the equation. 
\item[(3)] On the right hand side, there are a few more options. First, if any $S$ is capped off, we get zero. Otherwise there is a cup between the $(n+1)$-th and $(n+2)$-th strings, and we get an an $S^2$ which can be written as a linear combination of $S$ and Temperley-Lieb diagrams. Now if there are more cups on top of the term in the Jones-Wenzl, the $S$ term vanishes, and we get a Temperley-Lieb diagram. If there are no extra cups, then we are left with some Temperley-Lieb diagrams and some scalar multiple of an annular consequence of $S$ (which is not $j(\check{S})$).
\end{enumerate}
Hence we get a jellyfish relation.

\section{Computing jellyfish relations with quadratic tangles}\label{sec:ComputingJellyfishRelations}

In Subsection \ref{sec:FindGenerators}, we explained how we obtained the subfactor planar algebra $P_\bullet^\Gamma$ generated by the flat elements at depth $n$ with respect to some connection in the graph planar algebra of $\Gamma$. We now describe how to calculate 1-strand jellyfish relations.

The calculations in these subsections, based on the techniques from \cite{math/1007.1158}, rely on knowing the cubic moments of the generators and the structure coefficients for the algebra generated by $\{A,B,f^{(n)}\}$. This requires a computer calculation, but it is no more difficult than multiplying some large matrices, with entries in a fixed number field, then taking a trace. These moments are given in Appendix \ref{sec:moments}.

We perform three calculations to derive the jellyfish relations. In Subsection \ref{sec:QTAC}, we find those linear combinations of the quadratic tangles which lie in annular consequences of the generators. In Subsection \ref{sec:JM}, we express these quadratic tangles in the basis of annular consequences of our generators. Finally in Subsection \ref{sec:JF}, we invert the relations found in Subsection \ref{sec:JM} to express the relevant annular consequences back in terms of the quadratic tangles.

\begin{remark}
In Subsections \ref{sec:QTAC} through \ref{sec:JF}, our formulas are for orthonormal sets of self-adjoint generators. However, the programs in the {\tt Mathematica} notebook {\tt QuadraticTangles.nb} are slightly more general and include the necessary correction factors allowing us to work with orthogonal generators with arbitrary norms as well.
\end{remark}

In Subsection \ref{sec:Checking}, we describe two ways that we verify our formulas. Since the computer is doing the arithmetic, we like to verify our calculations in as many ways that we can think of.

\subsection{Identify quadratic tangles in annular consequences}\label{sec:QTAC}

Given our set of generators $\fB=\{A,B\}\subset P_{n,+}$, we have the dual generators $\check{\fB}=\{\check{A},\check{B}\}\subset P_{n,-}$, and $\fB$ and $\check{\fB}$ each give 4 quadratic tangles: 
\begin{align*}
\{A\circ A, A\circ B, B\circ A, B\circ B\}&\subset P_{n+1,+}\\\intertext{  and }
\{\check{A}\circ\check{A},\check{A}\circ\check{B},\check{B}\circ\check{A},\check{B}\circ\check{B}\}&\subset P_{n+1,-},
\end{align*}
where
$$
S\circ T = \quadraticDiagram{n}{S}{T}.
$$
Since we expect these generators to give a subfactor planar algebra with the principal graph being the underlying graph of the graph planar algebra in which they were found, we hope that some linear combinations of these quadratic tangles lie in the space $\textgoth{A}(A,B,\emptyset)$ of annular consequences of $A,B$, and the empty diagram (the annular consequences of the empty diagram are the diagrams in Temperley-Lieb), and the same for the checked generators: 
 \begin{align*}
\qtac&= \gA(A,B,\emptyset)\cap\spann(\{A\circ A, A\circ B, B\circ A, B\circ B\})\\
\qtac^\vee&= \gA(\check{A},\check{B},\emptyset)\cap \spann(\{\check{A}\circ\check{A},\check{A}\circ\check{B},\check{B}\circ\check{A},\check{B}\circ\check{B}\})
\end{align*}
($\qtac$ stands for ``quadratic tangles in annular consequences").

\begin{remark}
In our notation $\textgoth{A}(A,B,\emptyset)$ includes Temperley-Lieb. When we refer to the annular consequences of $A,B$ only, we will call this space $\gA(A,B)$ (which is what is referred to simply by $\textgoth{A}$ in \cite{math/1007.1158}).
\end{remark}

\begin{ex}
Starting with our $2$ generators found in the graph planar algebras of the 4442 or 3333 principal graphs, since 4442 and 3333 each have annular multiplicites $*22$, we hope that $\dim(\qtac)=\dim(\qtac^\vee)=2$, i.e., the quadratic tangles are as linearly independent as possible.
\end{ex} 

\begin{ex}
Starting with our $2$ generators found in the graph planar algebra of the 3311 principal graph, since 3311 has annular multiplicites $*20$, we hope that $\dim(\qtac)=\dim(\qtac^\vee)=4$.
\end{ex}

\begin{ex}
Starting with our $2$ generators found in the graph planar algebra of the 2221 principal graph, since 2221 has annular multiplicites $*21$, we hope that $\dim(\qtac)=\dim(\qtac^\vee)=3$.
\end{ex}

We use formulas from \cite{math/1007.1158} to calculate bases for $\qtac$ and $\qtac^\vee$. We first describe how to find a basis for $\qtac$.

First, we calculate the $4\times 4$ matrix of inner products modulo the annular consequences of $A,B$:
$$
\left(
\langle [S\circ T- P_{\gA(A,B)}(S\circ T)],P\circ Q\rangle 
\right)_{(S,T),(P,Q)\in\fB^2}.
$$
The inner products are given by the following formulas, where the second comes from Proposition 4.4.2 of  \cite{math/1007.1158}:
\begin{align*}
\langle S\circ T,P\circ Q\rangle &= \frac{1}{[n]}\Tr(QT)\Tr(SP)
\\
\langle P_{\gA(A,B)}S\circ T,P\circ Q\rangle&=\sum_{R\in\fB} \frac{1}{W_R}\bigg\lbrace
\left(\overline{a_R^{ST}}a_R^{PQ} + \sigma_T \overline{\sigma_S\sigma_Q}\sigma_P \overline{b_R^{ST}}b_R^{PQ}\right)
\left(\omega_R^{-1}+[2n+2]\right)\\
&+(-1)^{n+1}\sigma_R\left(\overline{\sigma_Q}\sigma_P\overline{a_R^{ST}}b_R^{PQ}+\sigma_T\overline{\sigma_S}\overline{b_R^{ST}}a_R^{PQ}\right)
\left(2\omega_R^{-1}[n+1]\right)\bigg\rbrace,
\end{align*}
where $a_R^{ST}=\Tr(RST)$, $b_R^{ST}=\Tr(\check{R}\check{S}\check{T})$, and $W_R=q^{2n+2}+q^{-2n+2}-\omega_R-\omega_R^{-1}$. Here, $q>1$ such that $\delta=[2]=q+q^{-1}$.

Taking a basis for the nullspace of this matrix gives us a basis for $\qtac$. 

\begin{remark}\label{rem:PassToDual}
To calculate a basis for $\qtac^\vee$, one passes to the dual planar algebra of the graph planar algebra (the same planar algebra, but with the shading reversed), and uses the same formulas above. 
This amounts to switching $a_R^{ST}$ and $b_R^{ST}$, since $S$ and $\check{S}$ have the same chirality for each $S\in\fB$.
\end{remark}

\subsection{Find the jellyfish matrices}\label{sec:JM}

Now we want to write each basis element of $\qtac$ or $\qtac^\vee$ as a linear combination of the $\cup_i(R),\cup_i(\check{R})$'s for $R\in\fB,\check{R}\in\check{\fB}$ respectively. We describe the process for the basis elements of $\qtac$, and the checked versions are again computed by passing to the dual as in Remark \ref{rem:PassToDual}. Using Proposition $4.4.1.i$ in \cite{math/1007.1158}, we have
\begin{equation}\label{eqn:CIAB}
P_{\gA(A,B)}(S\circ T) = \sum_{R\in \mathfrak{B}} \sigma_R^n a_R^{ST}\widehat{\cup}_{n+1}(R) +\sigma_T^{-1}\sigma_Sb_R^{ST}\widehat{\cup}_0(R).
\end{equation}
We can express $\widehat{\cup}_{n+1}(R),\widehat{\cup}_0(R)$ (in the dual annular basis) in terms of the annular basis $\cup_i(R)$ for $R\in\fB$, using the formulas from Proposition 4.2.9 of \cite{math/1007.1158}:
\begin{align*}
\widehat{\cup}_0(R) &=\frac{1}{W_R}\left([2n+2] \cup_0(R)+((-\sigma)^{n+1}+(-\sigma)^{-n-1})[n+1]\cup_{n+1}(R)+X\right)\\
\widehat{\cup}_{n+1}(R) &=\frac{1}{W_R}\left([2n+2] \cup_{n+1}(R)+((-\sigma)^{n+1}+(-\sigma)^{-n-1})[n+1]\cup_0(R)+Y\right)
\end{align*}
where $X,Y\in \overline{\gA(R)}=\spann\set{\cup_i(R)}{i\neq 0,n+1}$.

\begin{remark}\label{rem:DualAnnularBasisToAnnularBasis}
Our calculations, available bundled with the {\tt arXiv} sources for this paper in the \code{Mathematica} notebook \code{QuadraticTangles.nb}, are slightly more complicated; we don't actually use the formulas from Proposition 4.2.9 above, but instead directly compute the change of basis matrix from the dual annular basis to the annular basis as an extra check of these formulas. The change of basis matrix is computed as follows.

First, letting $U$ and $\widehat{U}$ be the column vectors corresponding to the basis elements $\cup_i(R)$ and $\widehat{\cup}_i(R)$, there is some matrix $V\in M_{2n+2}(\Complex)$ such that $VU=\widehat{U}$. From the formulas $\langle \widehat{\cup}_i(R),\cup_j(R)\rangle = \delta_{i,j}$ and 
$$
W_{i,j}=\langle \cup_i(R),\cup_j(R)\rangle = 
\begin{cases}
\delta &\text{if }i=j\\
\sigma_R^{\pm 1} & \text{if } j=i\pm 1\\
0 &\text{else,}
\end{cases}
$$ 
we have $V=\overline{W^{-1}}$ (remember that the inner product is linear on the \emph{right}).
\end{remark}

Putting it all together, we can express $v$ in our basis of $\qtac$ as a linear combination of $\cup_0(A),\cup_0(B),\cup_{n+1}(A),\cup_{n+1}(B)$, plus another element in  
$\overline{\gA(A,B,\emptyset)}=TL_{n+1,\pm}\oplus \spann(\set{\cup_i(R)}{R\in\fB\text{ and }i\neq 0,n+1})$. 
\begin{align*}
v
&= 
\sum_{S,T\in\fB} \gamma_{S,T} \quadraticDiagram{n}{S}{T}\\
&=\mu_A\, \jellyfishDiagram{n}{\check{A}}+ \mu_B\, \jellyfishDiagram{n}{\check{B}}+\nu_A\capunderDiagram{n}{{A}^{\vee n}}+\nu_B\capunderDiagram{n}{{B}^{\vee n}}+Z,
\end{align*}
where $Z\in \overline{\gA(A,B,\emptyset)}$ and 
$$
S^{\vee n}=
\begin{cases}
S & \text{if $n$ is even}\\
\check{S} &\text{if $n$ is odd}
\end{cases}
\hspace{.5in}
\check{S}^{\vee n}=
\begin{cases}
\check{S} & \text{if $n$ is even}\\
S &\text{if $n$ is odd.}
\end{cases}
$$

Given such an equation, we can multiply by a Jones-Wenzl idempotent in two ways to find the following relations, from which we will derive the desired box jellyfish relations:
\begin{enumerate}
\item[(1)] We can multiply by $f^{(2n+2)}$ on the bottom to isolate the $\cup_{0}(A),\cup_{0}(B)$:
$$
\sum_{S,T\in\fB} \gamma_{S,T} \boxquadraticDiagram{n}{S}{T} =\mu_A\, \boxjellyfishDiagram{n}{\check{A}}+ \mu_B\, \boxjellyfishDiagram{n}{\check{B}}
$$
as any cap on top of $f^{(2n+2)}$ gives zero.

\item[(2)] We can bend $f^{(2n+2)}$ around the top to isolate the $\cup_{n+1}(A),\cup_{n+1}(B)$:
$$
\sum_{S,T\in\fB}
\gamma_{S,T}
\begin{tikzpicture}[baseline = -.1cm]
	\clip (-1.6,-1.6)--(-1.6,1.6)--(3,1.6)--(3,-1.6);
	\draw (0,0)--(1.6,0)--(1.6,-1.6);
	\draw (0,-1.6)--(0,0);
	\node at (.8,.15) {{\scriptsize{$n-1$}}};
	\node at (.4,-.5) {{\scriptsize{$n+1$}}};
	\node at (1.2,-.6) {{\scriptsize{$n+1$}}};
	\draw[thick, unshaded] (0,0) circle (.4);
	\node at (0,0) {$S$};
	\draw[thick, unshaded] (1.6,0) circle (.4);
	\node at (1.6,0) {$T$};
	\node at (1.6,.55) {$\star$};
	\node at (0,.55) {$\star$};
	\filldraw[thick, unshaded] (1.4,-.8)  .. controls ++(0:1.2cm) and ++(0:1.2cm) .. (1.4,.8)--(.2,.8).. controls ++(180:1.2cm) and ++(180:1.2cm) .. (.2,-.8)--
	(.2,-1.4).. controls ++(180:2cm) and ++(180:2cm) .. (.2,1.4)--(1.4,1.4).. controls ++(0:2cm) and ++(0:2cm) .. (1.4,-1.4)--(1.4,-.8);
	\node at (.8,1.1) {$f^{(2n+2)}$};
\end{tikzpicture}
=
\nu_A \;\jwoverDiagram{n}{{A}^{\vee n}} \;+ \;\nu_B\; \jwoverDiagram{n}{{B}^{\vee n}}
$$
which is equivalent to
$$
\sum_{S,T\in\fB}
\gamma_{S,T}\sigma_S^{1-n}\sigma_T^{n-1}
\boxquadraticDiagram{n}{\check{S}^{\vee n}}{\check{T}^{\vee n}}
=\nu_A\boxjellyfishDiagram{n}{{A}^{\vee n}}\; +\; \nu_B\boxjellyfishDiagram{n}{{B}^{\vee n}}\,.
$$
\end{enumerate} 

\begin{remark}\label{rem:ExtraBoxJellyfishRelations}
The second relation above is actually superfluous. 
It suffices to consider the relations above of the first type for both $\fB$ and $\check{\fB}$. 
On the other hand, the computer is doing the arithmetic, so we prefer to get a nice consistency check on all our formulas with little extra work.
\end{remark}

\begin{nota}
For $S,T\in\fB,\check{\fB}$, we use the notation
$$
j(S) = \jellyfishDiagram{n}{S},\, f\cdot j(S) = \boxjellyfishDiagram{n}{S}, \text{ and } f(S\circ T) = \boxquadraticDiagram{n}{S}{T}.
$$
\end{nota}

\begin{defn}
Suppose $\{v_1,\dots, v_k\}\subset \qtac, \{\check{v}_1,\dots, \check{v}_k\}\subset \qtac^\vee$ are bases (in our examples, $2\leq k\leq 4$). First, we use the above method to calculate coefficients $\mu_A^i,\mu_B^i,\check{\mu}_A^i,\check{\mu}_B^i$ so that
\begin{align*}
v_i &= \sum_{S,T\in\fB} \gamma_{S,T}^i f(S\circ T)=\mu_A^i [f\cdot j(\check{A})]+ \mu_B^i [f\cdot j(\check{B})]\text{ and}\\
\check{v}_i &= \sum_{\check{S},\check{T}\in\check{\fB}} \check{\gamma}_{{S},{T}}^i f(\check{S}\circ \check{T})= \check{\mu}_A^i [f\cdot j(A)]+ \check{\mu}_B^i [f\cdot j({B})].
\end{align*}
The \emph{jellyfish matrices} are the matrices $J,\check{J}$ whose $i$-th rows are $\begin{pmatrix} \mu_A^i & \mu_B^i \end{pmatrix}$, $\begin{pmatrix} \check{\mu}_A^i & \check{\mu}_B^i \end{pmatrix}$ respectively.
\\
The \emph{quadratic tangles matrices} are the matrices $K,\check{K}$ whose $i$-th rows are $\begin{pmatrix} \gamma_{A,A}^i & \gamma_{A,B}^i, \gamma_{B,A}^i, \gamma_{B,B}^i\end{pmatrix}$, $\begin{pmatrix} \check{\gamma}_{A,A}^i & \check{\gamma}_{A,B}^i, \check{\gamma}_{B,A}^i, \check{\gamma}_{B,B}^i\end{pmatrix}$ respectively.
\\
Note that
$$
K
\begin{pmatrix}
f(A\circ A)\\
f(A\circ B)\\
f(B\circ A)\\
f(B\circ B)
\end{pmatrix}
= J
\begin{pmatrix}
f\cdot j(\check{A})\\
f\cdot j(\check{B})
\end{pmatrix},
$$
and similarly for the checked version.
\end{defn}

\subsection{Invert the jellyfish matrices to get box jellyfish relations}\label{sec:JF}

Given the jellyfish matrices $J,\check{J}$, we check if they have rank 2. If they do (and we know they should by \cite{1208.1564}), then we find a left inverse by the formula
$$
J^{L}= (J^*J)^{-1}J^*
$$
(and similarly for $\check{J}$), since $J^*J\in M_2(\Complex)$ has rank $2$. We then use $J^{L},\check{J}^{L}$ to get the \underline{box jellyfish relations} 
$$
\begin{pmatrix}
f\cdot j(\check{A})\\
f\cdot j(\check{B})
\end{pmatrix}
=
J^{L}K
\begin{pmatrix}
f(A\circ A)\\
f(A\circ B)\\
f(B\circ A)\\
f(B\circ B)
\end{pmatrix}
$$
which express $f\cdot j(\check{A}),f\cdot j(\check{B})$ as linear combinations of quadratic tangles, and similarly for the checked versions.

\subsection{Checking our formulas}\label{sec:Checking}

As emphasized in Remarks \ref{rem:DualAnnularBasisToAnnularBasis} and \ref{rem:ExtraBoxJellyfishRelations}, since the computer is doing the arithmetic, we like to check our formulas in as many ways as we can think of. We perform two extra checks of the formulas obtained from the above calculations. 
\begin{enumerate}

\item[(1)] Given our set of generators $\fB$ in the graph planar algebra, we can compute the annular bases for $\gA(A)$, $\gA(B)$ and the quadratic tangles $S\circ T$ for $S,T\in\fB$ directly from the graph planar algebra. We can then use numerical linear algebra to compute an approximate basis for $\qtac$, and we can compare these results with those obtained in Subsection \ref{sec:QTAC}. We do a similar check for $\check{\fB}$ and $\qtac^\vee$.

Similarly, we can compute numerical approximations for the coefficients of the quadratic tangles in annular consequences with respect to the annular basis directly in the graph planar algebra. We compare these numbers with those computed from Equation \eqref{eqn:CIAB} using Remark \ref{rem:DualAnnularBasisToAnnularBasis}.

Finally, we can numerically find the jellyfish formulas $J^{L}K,\check{J}^{L}\check{K}$ directly from the generators in the graph planar algebra and compare them with the $J^{L}K,\check{J}^{L}\check{K}$ computed in Subsection \ref{sec:JM}. 

All of these checks are carried out in {\tt QuadraticTangles.nb}, in the ``QT Direct'' sections for the graphs 3333, 3311, and 2221.

\item[(2)]
It would be beneficial to check the actual jellyfish formulas directly in the graph planar algebra. Since the Jones-Wenzl idempotent, written in the graph planar algebra, is expensive to compute, this is only feasible for the smallest graph 2221, and even then, we need a clever trick introduced in \cite{0909.4099} by Stephen Bigelow. First, note that
$$
\begin{tikzpicture}[baseline = -.9cm]
	\clip (-.7,-1.8)--(-.7,.5)--(2.4,.5)--(2.4,-1.8);
	\draw (0,-2)--(0,-.5);
	\draw (0,-.5) arc (180:0:.8cm);
	\draw (1.6,-2)--(1.6,-.5);
	\node at (.8,.15) {{\scriptsize{$n-1$}}};
	\node at (.4,-.6) {{\scriptsize{$n+1$}}};
	\node at (2,-.6) {{\scriptsize{$n+1$}}};
	\draw[thick, unshaded] (.2,-.4)--(.2,.4)--(1.4,.4)--(1.4,-.4)--(.2,-.4);
	\node at (.8,0) {$f^{(n+1)}$};	
	\draw[thick, unshaded] (-.4,-1.5)--(-.4,-.8)--(2,-.8)--(2,-1.5)--(-.4,-1.5);
	\node at (.8,-1.15) {$f^{(2n+2)}$};
	\node at (-.55,-1.15) {$\star$};
	\node at (.05,.35) {$\star$};
\end{tikzpicture}
=
0,
$$
which implies that for $S,T\in\fB$ and all $\gamma$,
$$
f(S\circ T)=
\boxquadraticDiagram{n}{S}{T}
=
\boxquadraticDiagram{n}{S}{T}
-\gamma 
\begin{tikzpicture}[baseline = -.9cm]
	\clip (-.7,-1.8)--(-.7,.5)--(2.4,.5)--(2.4,-1.8);
	\draw (0,-2)--(0,-.5);
	\draw (0,-.5) arc (180:0:.8cm);
	\draw (1.6,-2)--(1.6,-.5);
	\node at (.8,.15) {{\scriptsize{$n-1$}}};
	\node at (.4,-.6) {{\scriptsize{$n+1$}}};
	\node at (2,-.6) {{\scriptsize{$n+1$}}};
	\draw[thick, unshaded] (.2,-.4)--(.2,.4)--(1.4,.4)--(1.4,-.4)--(.2,-.4);
	\node at (.8,0) {$f^{(n+1)}$};	
	\draw[thick, unshaded] (-.4,-1.5)--(-.4,-.8)--(2,-.8)--(2,-1.5)--(-.4,-1.5);
	\node at (.8,-1.15) {$f^{(2n+2)}$};
	\node at (-.55,-1.15) {$\star$};
	\node at (.05,.35) {$\star$};
\end{tikzpicture}.
$$
Note that $ST-\frac{\Tr(ST)}{[n+1]^2}f^{(n)}$ is uncappable as it lies in $\spann\{A,B\}$. Hence applying any two caps which do not enclose $\star$ to $S\circ T - \frac{\Tr(ST)}{[n+1][n+2]} f^{(n+1)}$ gives zero, and therefore $f(S\circ T)$ is equal to
$$
\quadraticDiagram{n}{S}{T}
+
\begin{tikzpicture}[baseline = -.9cm]
	\clip (-.7,-1.8)--(-.7,.7)--(2.4,.7)--(2.4,-1.8);
	\draw (0,-2)--(0,0)--(1.6,0)--(1.6,-2);
	\node at (.8,.15) {{\scriptsize{$n-1$}}};
	\node at (.4,-.6) {{\scriptsize{$n+1$}}};
	\node at (2,-.6) {{\scriptsize{$n+1$}}};
	\draw[thick, unshaded] (0,0) circle (.4);
	\node at (0,0) {$S$};
	\draw[thick, unshaded] (1.6,0) circle (.4);
	\node at (1.6,0) {$T$};
	\node at (1.6,.55) {$\star$};
	\node at (0,.55) {$\star$};
	\draw[thick, unshaded] (-.4,-1.5)--(-.4,-.8)--(2,-.8)--(2,-1.5)--(-.4,-1.5);
	\node at (.8,-1.15) {$f^{(2n+2)}_{1-cup}$};
	\node at (-.55,-1.15) {$\star$};
\end{tikzpicture}
-
\frac{\Tr(ST)}{[n+1][n+2]}
\begin{tikzpicture}[baseline = -.9cm]
	\clip (-.7,-1.8)--(-.7,.5)--(2.4,.5)--(2.4,-1.8);
	\draw (0,-2)--(0,-.5);
	\draw (0,-.5) arc (180:0:.8cm);
	\draw (1.6,-2)--(1.6,-.5);
	\node at (.8,.15) {{\scriptsize{$n-1$}}};
	\node at (.4,-.6) {{\scriptsize{$n+1$}}};
	\node at (2,-.6) {{\scriptsize{$n+1$}}};
	\draw[thick, unshaded] (.2,-.4)--(.2,.4)--(1.4,.4)--(1.4,-.4)--(.2,-.4);
	\node at (.8,0) {$f^{(n+1)}$};	
	\draw[thick, unshaded] (-.4,-1.5)--(-.4,-.8)--(2,-.8)--(2,-1.5)--(-.4,-1.5);
	\node at (.8,-1.15) {$f^{(2n+2)}_{1-cup}$};
	\node at (-.55,-1.15) {$\star$};
	\node at (.05,.35) {$\star$};
\end{tikzpicture}
$$
where $f^{(2n+2)}_{1-cup}$ is the sum of all terms in the Jones-Wenzl $f^{(2n+2)}$ with exactly one cup on the top and one cup on the bottom. A formula for the 1-cup Jones-Wenzl in terms of Temperley-Lieb diagrams can be deduced easily from \cite{MR2375712,morrison}:
\begin{align*}
f^{(k)}_{1-cup}
&=
-\sum_{a=0}^{k-2}
\frac{[a+1][k-a-1]}{[k]}
\begin{tikzpicture}[baseline = -.1cm]
	\draw (0,-.8)--(0,.8);
	\node at (.25,0) {\scriptsize{$\cdots$}};
	\draw (.5,-.8)--(.5,.8);
	\draw (.7,.8) arc (-180:0:.2cm);
	\draw (.7,-.8) arc (180:0:.2cm);
	\draw (1.3,-.8)--(1.3,.8);
	\node at (1.55,0) {\scriptsize{$\cdots$}};	
	\draw (1.8,-.8)--(1.8,.8);
	\node at (.25,-.4) {$\underbrace{}_{a}$};
\end{tikzpicture}
\\
+&\sum_{\substack{a+b+c=k-2\\c>0}}
(-1)^{c+1}
\frac{[a+1][b+1]}{[k]}
\left(
\begin{tikzpicture}[baseline = -.1cm]
	\draw (0,-.8)--(0,.8);
	\node at (.25,0) {\scriptsize{$\cdots$}};
	\draw (.5,-.8)--(.5,.8);
	\draw (.7,.8) arc (-180:0:.2cm);
	\draw (.7,-.8)--(1.3,.8);
	\node at (1.25,0) {\scriptsize{$\cdots$}};
	\draw (1.2,-.8)--(1.8,.8);
	\draw (1.4,-.8) arc (180:0:.2cm);
	\draw (2,-.8)--(2,.8);
	\node at (2.25,0) {\scriptsize{$\cdots$}};	
	\draw (2.5,-.8)--(2.5,.8);
	\node at (.25,-.4) {$\underbrace{}_{a}$};
	\node at (2.25,-.4) {$\underbrace{}_{b}$};
	\node at (1.15,-.4) {$\underbrace{}_{c}$};
\end{tikzpicture}
+
\begin{tikzpicture}[baseline = -.1cm]
	\draw (0,-.8)--(0,.8);
	\node at (.25,0) {\scriptsize{$\cdots$}};
	\draw (.5,-.8)--(.5,.8);
	\draw (.7,-.8) arc (180:0:.2cm);
	\draw (.7,.8)--(1.3,-.8);
	\node at (1.25,0) {\scriptsize{$\cdots$}};
	\draw (1.2,.8)--(1.8,-.8);
	\draw (1.4,.8) arc (-180:0:.2cm);
	\draw (2,-.8)--(2,.8);
	\node at (2.25,0) {\scriptsize{$\cdots$}};	
	\draw (2.5,-.8)--(2.5,.8);
	\node at (.25,-.4) {$\underbrace{}_{a}$};
	\node at (2.25,-.4) {$\underbrace{}_{b}$};
	\node at (1.35,-.4) {$\underbrace{}_{c}$};
\end{tikzpicture}
\right).
\end{align*}   
   
Note that the number of terms in the 1-cup Jones-Wenzl grows linearly, whereas the number of terms in the Jones-Wenzl idempotent, given by Catalan numbers, grows exponentially. 

Using this trick, we numerically check the jellyfish formulas calculated in Section \ref{sec:Relations2221} in the graph planar algebra of 2221.
\end{enumerate}

\section{Generators and relations}\label{sec:relations}

We now have a subsection for each of our subfactor planar algebras. The three lemmas in each section show the results of the calculations described above. The proofs are simply substituting the appropriate quantities (moments, chiralities, etc.) into the formulas above. You can verify all these calculations using the Mathematica notebooks included with the {\tt arXiv} sources of this paper.

Throughout, the notation  $\lambda_{a_n, \ldots, a_0}^{(z)}$ denotes the root of the polynomial $\sum_i a_i x^i$ which is closest to the approximate real number $z$. (The digits of precision of $z$ are in each case chosen so that this unambiguously identifies the root.) Thus for example $\lambda^{(0.3278)}_{1024,0,-864,0,81}$ denotes the root of $1024 x^4 - 864 x^2 + 81$ which is closest to $0.3278$.

In this section, $A,B$ denote the specific generators denoted by $A,B$ given in the appendix for each respective graph.

\begin{lem}
\label{lem:check-structure-coefficients}
For each of the graphs $\Gamma=4442,3333,3311$ or $2221$, the elements $A, B$ and $f^{(n)}$ in the $(n,+)$-box space of the graph planar algebra are closed under multiplication, and their structure coefficients
$$
ST = \alpha_{S,T}^A A+\alpha_{S,T}^B B +\alpha_{S,T}^{f} f^{(n)}
$$
are given the following ratio of moments:
$$
\alpha_{S,T}^R = \frac{\Tr(STR)}{\Tr(R^2)}.
$$
A similar result holds for the elements $\check{A},\check{B}$ and $f^{(n)}$ in the $(n,-)$-box space.
\end{lem}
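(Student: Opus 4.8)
The plan is to separate the statement into two claims: (i) the span of $\{A,B,f^{(n)}\}$ is closed under multiplication, i.e. each of the four products $A^2, AB, BA, B^2$ lies in $\spann\{A,B,f^{(n)}\}$; and (ii) granting (i), the structure coefficients are forced to equal $\alpha^R_{S,T} = \Tr(STR)/\Tr(R^2)$. Claim (ii) is the purely formal part, so I would dispatch it first. The three elements $A,B,f^{(n)}$ are mutually orthogonal for the trace form $\langle X,Y\rangle = \Tr(XY)$: orthonormality of $\fB$ gives $\Tr(AB)=0$ and $\Tr(A^2)=\Tr(B^2)=1$; uncappability of $A$ gives $f^{(n)}A=A$, whence $\Tr(Af^{(n)})=\Tr(A)$, and $\Tr(A)=0$ because $A$ is a rotational eigenvector with $\rho(A)=\omega_A A$ for $\omega_A\neq 1$ while the trace is rotation-invariant (equivalently, this vanishing is visible among the computed moments); likewise for $B$; and $\Tr((f^{(n)})^2)=\Tr(f^{(n)})=[n+1]\neq 0$. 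Writing $ST = \alpha^A A + \alpha^B B + \alpha^f f^{(n)}$ and pairing both sides with each $R\in\{A,B,f^{(n)}\}$, orthogonality collapses the right-hand side to $\alpha^R\,\Tr(R^2)$, which yields the claimed formula.

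The substantive content is claim (i). Conceptually this is plausible: since $A,B$ are uncappable we have $f^{(n)}S=S=Sf^{(n)}$ for $S\in\{A,B\}$, so $f^{(n)}(ST)f^{(n)}=ST$ and each product is again an uncappable $n$-box. Thus all four products lie in the space $\mathcal U_n$ of uncappable $n$-boxes of the graph planar algebra, which already contains the linearly independent set $\{A,B,f^{(n)}\}$. If one knew that $\dim\mathcal U_n = 3$ (the value dictated by $\Gamma$, namely $1$ for $f^{(n)}$ plus the excess $2$), then closure would be immediate. However, at this stage we have not yet established that $P_\bullet^\Gamma$ has the expected supertransitivity, so I would instead verify closure by the direct route the generators make available: $A$ and $B$ are explicit matrices in the graph planar algebra of $\Gamma$, so I would compute the four matrix products and check that each equals the linear combination prescribed in (ii). Concretely, it suffices to evaluate the nine cubic moments $\Tr(STR)$ and confirm that $ST - \sum_R \frac{\Tr(STR)}{\Tr(R^2)}R = 0$ as a matrix; this is a finite computation over the relevant number field.

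The dual statement for $\check A,\check B,f^{(n)}$ in $P_{n,-}$ is handled by the identical argument: the dual generators are again uncappable rotational eigenvectors with vanishing trace, orthonormal for the trace form on $P_{n,-}$, so (ii) goes through verbatim with $\Tr$ taken in $P_{0,-}$, and closure is checked by the same direct matrix computation applied to $\check A,\check B$. The main obstacle is precisely claim (i): the coefficient formula in (ii) is automatic once closure is known, but closure cannot be read off from supertransitivity (which is not yet available at this point), so it rests on the explicit finite computation in the graph planar algebra — the one place where this lemma genuinely invokes the computer.
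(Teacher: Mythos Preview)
Your approach matches the paper's exactly: closure is verified by direct computation in the graph planar algebra (the paper defers this to the program \texttt{VerifyClosedUnderMultiplication}), and once closure is known the structure coefficients follow by taking inner products against $A$, $B$, $f^{(n)}$.

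There is one genuine slip in your orthogonality argument, however. You justify $\Tr(A)=0$ via $\omega_A\neq 1$ together with rotation-invariance of the trace, but for $\Gamma=3333$ one has $\omega_A=\omega_B=1$ (and $\omega_B=1$ also for $3311$ and $2221$), so that argument does not cover all four cases. The correct and uniform reason is uncappability itself: a generator lies in $P_{n,+}\ominus TL_{n,+}$, hence $\langle A,1_n\rangle=\Tr(A)=0$ directly (equivalently, the trace closure contains a cap on $A$). Your hedge about the computed moments does not rescue this, since the appendix only records quadratic and cubic moments. A smaller inaccuracy: the specific generators in the paper are orthogonal but not orthonormal (the appendix explicitly records $\Tr(A^2)\neq 1$ in each case), so your claim $\Tr(A^2)=\Tr(B^2)=1$ is wrong as stated, though this does not affect the formula $\alpha^R_{S,T}=\Tr(STR)/\Tr(R^2)$, which only requires mutual orthogonality.
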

\begin{proof}
The program {\tt VerifyClosedUnderMultiplication} in the {\tt Mathematica} notebook {\tt QuadraticTangles.nb} verifies that the algebra generated by the set $\{f^{(n)}, A, B\}$ is closed under multiplication directly in the graph planar algebra. Once we know this, the formula claimed for the structure coefficients follows by taking inner products.
\end{proof}

\subsection{4442}\label{sec:Relations4442}
\begin{lem}
The linear combinations
$$K\begin{pmatrix}
A\circ A\\
A\circ B\\
B\circ A\\
B\circ B
\end{pmatrix}
\text{ and }
\check{K}
\begin{pmatrix}
\check{A}\circ\check{A}\\
\check{A}\circ\check{B}\\
\check{B}\circ\check{A}\\
\check{B}\circ\check{B}
\end{pmatrix}
$$
lie in annular consequences, where
\begin{align*}
K &= \begin{pmatrix}
1 & 0 & 0 & -1 \\[6pt]
0 & 1 & \lambda_{1,-1,1,-1,1}^{(0.809+0.588 i)} & 0\end{pmatrix}\\
\intertext{and}\check{K} &= \begin{pmatrix}
1 & 0 & 0 & -1 \\[6pt]
0 & 1 & \lambda_{1,-1,1,-1,1}^{(0.809+0.588 i)} & 0\end{pmatrix}.\\
\end{align*}
\end{lem}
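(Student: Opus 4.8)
The rows of $K$ are, by the construction of Subsection \ref{sec:JM}, the coefficient vectors $(\gamma^i_{S,T})$ of candidate elements $v_i = \sum_{S,T\in\fB}\gamma^i_{S,T}(S\circ T)$, so the content of the lemma is precisely that each $v_i$ lies in $\gA(A,B,\emptyset)$, i.e. that the rows of $K$ span $\qtac$, and dually that the rows of $\check K$ span $\qtac^\vee$. My plan is to apply the prescription of Subsection \ref{sec:QTAC} verbatim: form the $4\times 4$ Gram matrix
$$
M_{(S,T),(P,Q)} = \langle S\circ T - P_{\gA(A,B)}(S\circ T),\, P\circ Q\rangle,
$$
whose nullspace is by definition $\qtac$, and check that the two rows of $K$ are a basis for $\ker M$ (and symmetrically for $\check M$ and $\check K$).

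Concretely, I would first assemble the two explicit pairings of Subsection \ref{sec:QTAC}: the term $\frac{1}{[n]}\Tr(QT)\Tr(SP)$ and the projection term, which is built from the cubic moments $a_R^{ST}=\Tr(RST)$ and $b_R^{ST}=\Tr(\check R\check S\check T)$, the chiralities $\sigma_A,\sigma_B$, the rotational eigenvalues $\omega_A,\omega_B$, and the weights $W_R=q^{2n+2}+q^{-2n+2}-\omega_R-\omega_R^{-1}$, all evaluated at the modulus $\delta$ of the $4442$ graph. Substituting the moments recorded in Appendix \ref{sec:moments} for the specific generators $A,B$ (and, for $\check K$, interchanging the roles of $a$ and $b$ exactly as in Remark \ref{rem:PassToDual}) yields a concrete matrix $M$ over the relevant cyclotomic number field, whose off-diagonal entries work out to the primitive tenth root of unity $e^{i\pi/5}=\lambda^{(0.809+0.588i)}_{1,-1,1,-1,1}$. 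I would then compute $\ker M$, confirm that it is two-dimensional (as anticipated by the $*22$ annular multiplicities in the Example following Subsection \ref{sec:QTAC}), and verify that it is spanned by the displayed rows of $K$.

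The substance here is entirely computational rather than conceptual: the formulas of Subsection \ref{sec:QTAC}, valid by the Theorem asserting that Jones' $\S 4$ formulas hold for an orthonormal set of generators with scalar moments, already do all the theoretical work, so what remains is careful bookkeeping of the complex conjugates, the chirality factors $\sigma_S$, and the normalization $[n]$, followed by exact linear algebra over a number field containing $e^{i\pi/5}$. The main obstacle is simply keeping this bookkeeping error-free and correctly importing the cubic moments; both are delegated to the {\tt QuadraticTangles.nb} notebook. As a safeguard, the nullspace and the entries of $K$ can be recomputed numerically straight from the generators in the graph planar algebra via the ``QT Direct'' cross-check of Subsection \ref{sec:Checking}. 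The only place where the specific choice of generators could in principle fail is precisely the dimension count: that $M$ has a \emph{two}-dimensional kernel, rather than a smaller one, is what certifies that the quadratic tangles are as linearly independent as the graph $4442$ predicts.
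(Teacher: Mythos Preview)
Your proposal is correct and follows essentially the same route as the paper: the paper states explicitly at the start of Section~\ref{sec:relations} that ``the proofs are simply substituting the appropriate quantities (moments, chiralities, etc.) into the formulas above,'' with verification delegated to {\tt QuadraticTangles.nb}, which is exactly what you outline. One small slip: the primitive tenth root of unity appears in the nullspace vectors (i.e.\ in $K$), not in the off-diagonal entries of $M$ itself; and the ``QT Direct'' numerical cross-check of Subsection~\ref{sec:Checking} is only carried out for 3333, 3311, and 2221, not for 4442.
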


\begin{lem}
In particular, we have
$$
K
\begin{pmatrix}
f(A\circ A)\\
f(A\circ B)\\
f(B\circ A)\\
f(B\circ B)
\end{pmatrix}
=J
\begin{pmatrix}
f\cdot j(\check{A})\\
f\cdot j(\check{B})
\end{pmatrix}
\text{ and }
\check{K}
\begin{pmatrix}
f(\check{A}\circ\check{A})\\
f(\check{A}\circ\check{B})\\
f(\check{B}\circ\check{A})\\
f(\check{B}\circ\check{B})
\end{pmatrix}
=
\check{J}
\begin{pmatrix}
f\cdot j(A)\\
f\cdot j(B)
\end{pmatrix},$$
where
\begin{align*}
J &= \begin{pmatrix}
\lambda_{109,0,-5770,0,25}^{(7.275)} & \lambda_{109,0,-5770,0,25}^{(-7.275)} \\[6pt]
\lambda_{\begin{subarray}{l}11881,\\0,\\-966285,\\0,\\30007665,\\0,\\1366875,\\0,\\164025\\\mbox{}\end{subarray}}^{(6.745+2.191 i)} & \lambda_{\begin{subarray}{l}11881,\\0,\\-966285,\\0,\\30007665,\\0,\\1366875,\\0,\\164025\\\mbox{}\end{subarray}}^{(6.745+2.191 i)}\end{pmatrix}\\
\intertext{and}
\check{J} &= \begin{pmatrix}
\lambda_{109,0,-5770,0,25}^{(7.275)} & \lambda_{109,0,-5770,0,25}^{(-7.275)} \\[6pt]
\lambda_{\begin{subarray}{l}11881,\\0,\\-966285,\\0,\\30007665,\\0,\\1366875,\\0,\\164025\\\mbox{}\end{subarray}}^{(6.745+2.191 i)} & \lambda_{\begin{subarray}{l}11881,\\0,\\-966285,\\0,\\30007665,\\0,\\1366875,\\0,\\164025\\\mbox{}\end{subarray}}^{(6.745+2.191 i)}\end{pmatrix}.\\
\end{align*}
\end{lem}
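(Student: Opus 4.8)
The plan is to read this second lemma as the packaging, for the specific 4442 generators, of the general procedure set up in Subsections~\ref{sec:JM} and~\ref{sec:JF}, taking the preceding lemma as input. By that lemma the two rows of $K$ (resp.\ $\check K$) produce linear combinations $v_i=\sum_{S,T\in\fB}\gamma^i_{S,T}\,(S\circ T)$ (resp.\ $\check v_i$) lying in $\gA(A,B,\emptyset)$ (resp.\ $\gA(\check A,\check B,\emptyset)$), and since $\dim(\qtac)=\dim(\qtac^\vee)=2$ these two combinations form a basis of $\qtac$ (resp.\ $\qtac^\vee$). Hence the only task is to read off, for each $v_i$, the coefficients $\mu^i_A,\mu^i_B$ of $j(\check A),j(\check B)$ in its annular expansion, because $J$ is by definition the matrix with rows $(\mu^i_A\ \mu^i_B)$ and the asserted identity is exactly the type-(1) relation of Subsection~\ref{sec:JM} written simultaneously for $i=1,2$.

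First I would expand each $v_i$ in the annular basis. Since $v_i\in\gA(A,B,\emptyset)$, I would apply Equation~\eqref{eqn:CIAB} to each summand $S\circ T$ to write the annular-consequence part of $v_i$ in the dual annular basis $\widehat\cup_0(R),\widehat\cup_{n+1}(R)$, with coefficients assembled from the cubic moments $a^{ST}_R=\Tr(RST)$ and $b^{ST}_R=\Tr(\check R\check S\check T)$ together with the chiralities $\sigma_S$. I would then convert to the annular basis $\cup_i(R)$ using the change-of-basis matrix $V=\overline{W^{-1}}$ of Remark~\ref{rem:DualAnnularBasisToAnnularBasis} (equivalently Proposition 4.2.9 of~\cite{math/1007.1158}), which needs only the eigenvalues $\omega_R$ and the weights $W_R=q^{2n+2}+q^{-2n+2}-\omega_R-\omega_R^{-1}$. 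The coefficients of $\cup_0(A)$ and $\cup_0(B)$ are by definition $\mu^i_A$ and $\mu^i_B$; all $\cup_{n+1}$, Temperley--Lieb, and remaining annular terms are annihilated once $f^{(2n+2)}$ is attached along the bottom, since every cap adjacent to a Jones--Wenzl vanishes. What survives is precisely $\mu^i_A\,[f\cdot j(\check A)]+\mu^i_B\,[f\cdot j(\check B)]$ on the right and $\sum_{S,T}\gamma^i_{S,T}\,f(S\circ T)$ on the left. Stacking $i=1,2$ yields the stated matrix equation with $J$ the matrix of $\mu^i$'s; the checked identity follows verbatim after reversing the shading as in Remark~\ref{rem:PassToDual}, which interchanges $a^{ST}_R$ and $b^{ST}_R$, and here one finds $\check J=J$ and $\check K=K$, consistent with the self-duality of 4442.

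The genuinely substantive content is the arithmetic compressed into the previous paragraph, and this is where I expect the real work to lie. The moments $\Tr(RST)$ and $\Tr(\check R\check S\check T)$ admit no closed form by hand: they are traces of products of the large matrices representing $A$ and $B$ in the graph planar algebra recorded in Appendix~\ref{sec:moments}, best computed in the lopsided convention and corrected back to the spherical one via the scalars of Examples~\ref{ex:FourierTransform} and~\ref{ex:CorrectionFactors}. Substituting these numbers, the eigenvalues $\omega_R$, and the quantum integers at $\delta=\sqrt{3+\sqrt5}$ into the formulas above produces each entry of $J$ as a concrete algebraic number, and the delicate step is to identify it exactly---for instance as the root of $109x^4-5770x^2+25$ nearest $7.275$, or of the stated degree-eight polynomial nearest $6.745+2.191i$---using {\tt RootApproximant} and then confirming the guess by exact arithmetic in the relevant number field. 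I would treat the correctness of these exact values, rather than any structural claim, as the crux, and would corroborate them through the two independent checks of Subsection~\ref{sec:Checking}: recomputing $J^{L}K$ numerically straight from the generators in the graph planar algebra, and verifying the resulting jellyfish formulas directly via the $1$-cup Jones--Wenzl trick.
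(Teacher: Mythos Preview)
Your proposal is correct and follows exactly the paper's approach: the paper states at the head of Section~\ref{sec:relations} that ``the proofs are simply substituting the appropriate quantities (moments, chiralities, etc.) into the formulas above,'' and you have accurately unpacked what that substitution entails via Equation~\eqref{eqn:CIAB} and the change of basis of Remark~\ref{rem:DualAnnularBasisToAnnularBasis}. One small overreach: the corroborating checks of Subsection~\ref{sec:Checking} that you propose are, according to the paper, carried out only for 3333, 3311, and 2221 (check~(1)) and only for 2221 (check~(2)), not for 4442, so those particular verifications are not available here.
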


\begin{lem}
The elements $A$ and $B$ satisfy the box jellyfish relations
$$
\begin{pmatrix}
f\cdot j(\check{A})\\
f\cdot j(\check{B})
\end{pmatrix}
=J^{L}K
\begin{pmatrix}
f(A\circ A)\\
f(A\circ B)\\
f(B\circ A)\\
f(B\circ B)
\end{pmatrix}
\text{ and }
\begin{pmatrix}
f\cdot j(A)\\
f\cdot j(B)
\end{pmatrix}
=
\check{J}^{L}\check{K}
\begin{pmatrix}
f(\check{A}\circ\check{A})\\
f(\check{A}\circ\check{B})\\
f(\check{B}\circ\check{A})\\
f(\check{B}\circ\check{B})
\end{pmatrix}
$$
where
\begin{align*}
J^{L}K &= \begin{pmatrix}
\lambda_{400,0,-23080,0,109}^{(0.06872)} & \lambda_{\begin{subarray}{l}41990400,\\0,\\87480000,\\0,\\480122640,\\0,\\-3865140,\\0,\\11881\\\mbox{}\end{subarray}}^{(0.067054-0.021787 i)} & \lambda_{\begin{subarray}{l}41990400,\\0,\\87480000,\\0,\\480122640,\\0,\\-3865140,\\0,\\11881\\\mbox{}\end{subarray}}^{(0.067054+0.021787 i)} & \lambda_{400,0,-23080,0,109}^{(-0.06872)} \\[6pt]
\lambda_{400,0,-23080,0,109}^{(-0.06872)} & \lambda_{\begin{subarray}{l}41990400,\\0,\\87480000,\\0,\\480122640,\\0,\\-3865140,\\0,\\11881\\\mbox{}\end{subarray}}^{(0.067054-0.021787 i)} & \lambda_{\begin{subarray}{l}41990400,\\0,\\87480000,\\0,\\480122640,\\0,\\-3865140,\\0,\\11881\\\mbox{}\end{subarray}}^{(0.067054+0.021787 i)} & \lambda_{400,0,-23080,0,109}^{(0.06872)}\end{pmatrix}\\
\intertext{and}
\check{J}^{L}\check{K} &= \begin{pmatrix}
\lambda_{400,0,-23080,0,109}^{(0.06872)} & \lambda_{\begin{subarray}{l}41990400,\\0,\\87480000,\\0,\\480122640,\\0,\\-3865140,\\0,\\11881\\\mbox{}\end{subarray}}^{(0.067054-0.021787 i)} & \lambda_{\begin{subarray}{l}41990400,\\0,\\87480000,\\0,\\480122640,\\0,\\-3865140,\\0,\\11881\\\mbox{}\end{subarray}}^{(0.067054+0.021787 i)} & \lambda_{400,0,-23080,0,109}^{(-0.06872)} \\[6pt]
\lambda_{400,0,-23080,0,109}^{(-0.06872)} & \lambda_{\begin{subarray}{l}41990400,\\0,\\87480000,\\0,\\480122640,\\0,\\-3865140,\\0,\\11881\\\mbox{}\end{subarray}}^{(0.067054-0.021787 i)} & \lambda_{\begin{subarray}{l}41990400,\\0,\\87480000,\\0,\\480122640,\\0,\\-3865140,\\0,\\11881\\\mbox{}\end{subarray}}^{(0.067054+0.021787 i)} & \lambda_{400,0,-23080,0,109}^{(0.06872)}\end{pmatrix}.\\
\end{align*}
\end{lem}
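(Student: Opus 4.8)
The plan is to read this lemma off from the two preceding lemmas of this subsection via the inversion procedure of Subsection~\ref{sec:JF}. The middle lemma provides the matrix identity
\[
K
\begin{pmatrix}
f(A\circ A)\\
f(A\circ B)\\
f(B\circ A)\\
f(B\circ B)
\end{pmatrix}
=J
\begin{pmatrix}
f\cdot j(\check{A})\\
f\cdot j(\check{B})
\end{pmatrix},
\]
together with its checked analogue, so the only remaining task is to invert $J$ (resp.\ $\check{J}$) on the left and read off the resulting relation.

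First I would confirm that the explicit $2\times 2$ jellyfish matrix $J$ has rank $2$. By \cite{1208.1564} the $4442$ graph admits $1$-strand jellyfish generators, so full rank is exactly what we expect, and a failure here would signal a computational error rather than a genuine obstruction; concretely it suffices to check that $J^*J\in M_2(\Complex)$ is invertible, i.e.\ that $\det(J^*J)\neq 0$ for the stated algebraic entries, a finite computation in the ambient number field. Granting this, the left inverse $J^{L}=(J^*J)^{-1}J^*$ satisfies $J^{L}J=I_2$, so left-multiplying the displayed identity by $J^{L}$ collapses the right-hand side and yields
\[
\begin{pmatrix}
f\cdot j(\check{A})\\
f\cdot j(\check{B})
\end{pmatrix}
=J^{L}K
\begin{pmatrix}
f(A\circ A)\\
f(A\circ B)\\
f(B\circ A)\\
f(B\circ B)
\end{pmatrix},
\]
and the checked box jellyfish relation follows verbatim from $\check{J}^{L}\check{J}=I_2$ applied to the checked identity.

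What remains is to verify that the product $J^{L}K$ equals the explicit $2\times 4$ matrix displayed in the statement, and likewise that $\check{J}^{L}\check{K}$ equals its stated value. This is a routine but arithmetically heavy matrix multiplication whose entries are algebraic numbers presented as distinguished roots $\lambda^{(z)}_{a_n,\dots,a_0}$ of high-degree polynomials; I would perform it symbolically in the relevant number field, exactly as the bundled notebook {\tt QuadraticTangles.nb} does, and then match each computed root against its polynomial-and-approximation label. The main obstacle is therefore not conceptual but computational: carrying the exact arithmetic through without error and certifying the rank-$2$ condition that licenses the inversion.
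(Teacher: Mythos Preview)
Your proposal is correct and is precisely the approach the paper takes: as stated at the start of Section~\ref{sec:relations}, the proofs of these lemmas are nothing more than substituting the moments and chiralities into the formulas of Section~\ref{sec:ComputingJellyfishRelations}, in this case the inversion procedure of Subsection~\ref{sec:JF}, with the arithmetic carried out and verified in the accompanying {\tt Mathematica} notebook. Your write-up just makes explicit the two steps (checking $\operatorname{rank}(J)=2$ so that $J^{L}=(J^*J)^{-1}J^*$ exists, then left-multiplying the identity from the previous lemma) that the paper leaves implicit.
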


\subsection{3333}
\begin{lem}
The linear combinations
$$K\begin{pmatrix}
A\circ A\\
A\circ B\\
B\circ A\\
B\circ B
\end{pmatrix}
\text{ and }
\check{K}
\begin{pmatrix}
\check{A}\circ\check{A}\\
\check{A}\circ\check{B}\\
\check{B}\circ\check{A}\\
\check{B}\circ\check{B}
\end{pmatrix}
$$
lie in annular consequences, where
\begin{align*}
K &= \begin{pmatrix}
1 & 0 & 0 & \frac{1}{6} \left(-3-\sqrt{5}\right) \\
0 & 1 & 1 & 0\end{pmatrix}\\
\intertext{and}\check{K} &= \begin{pmatrix}
1 & 0 & 0 & \frac{1}{6} \left(-3-\sqrt{5}\right) \\
0 & 1 & 1 & 0\end{pmatrix}.\\
\end{align*}
\end{lem}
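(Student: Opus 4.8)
By the construction in Subsection~\ref{sec:QTAC}, the space $\qtac = \gA(A,B,\emptyset)\cap\spann\{A\circ A, A\circ B, B\circ A, B\circ B\}$ is exactly the nullspace of the $4\times 4$ Gram matrix
$$
M = \left(\langle [S\circ T - P_{\gA(A,B)}(S\circ T)], P\circ Q\rangle\right)_{(S,T),(P,Q)\in\fB^2}.
$$
Indeed, writing $v=\sum_{S,T}\gamma_{S,T}(S\circ T)$ and $v_\perp=\sum_{S,T}\gamma_{S,T}[S\circ T-P_{\gA(A,B)}(S\circ T)]$ for the component of $v$ orthogonal to the annular consequences, the $(S,T)$ entry of $M\gamma$ equals $\langle [S\circ T-P_{\gA(A,B)}(S\circ T)],v\rangle=\langle [S\circ T-P_{\gA(A,B)}(S\circ T)],v_\perp\rangle$; since $v_\perp$ lies in the span of the vectors $[P\circ Q-P_{\gA(A,B)}(P\circ Q)]$, we have $M\gamma=0$ iff $v_\perp=0$ iff $v\in\gA(A,B,\emptyset)$. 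Hence the plan is to assemble $M$, verify that the two rows of $K$ lie in its kernel, and check that $\dim\ker M=2$ with the rows of $K$ a basis; this last equality matches the annular multiplicities $*22$ of the $3333$ graph, as anticipated in the Example above.

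To build $M$, I would substitute into the two inner-product formulas of Subsection~\ref{sec:QTAC}, namely $\langle S\circ T, P\circ Q\rangle = \frac{1}{[n]}\Tr(QT)\Tr(SP)$ together with the formula for $\langle P_{\gA(A,B)}(S\circ T), P\circ Q\rangle$ taken from Proposition~4.4.2 of \cite{math/1007.1158}. The required inputs are the modulus $\delta=q+q^{-1}$ with $\delta^2=3+\sqrt5$ (so $q>1$ is fixed), the rotational eigenvalues $\omega_R$ and chiralities $\sigma_R$ of the generators $R\in\fB$, the scalars $W_R=q^{2n+2}+q^{-2n+2}-\omega_R-\omega_R^{-1}$, and the cubic moments $a_R^{ST}=\Tr(RST)$ and $b_R^{ST}=\Tr(\check R\check S\check T)$. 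Lemma~\ref{lem:check-structure-coefficients} guarantees that the relevant traces really are scalar multiples of the empty diagram, and the concrete values of all these moments are recorded in Appendix~\ref{sec:moments}. With these in hand, forming $M$ and extracting its kernel is a finite linear-algebra computation over a fixed number field, and a reduced-echelon basis for $\ker M$ is exactly the two rows of $K$.

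For $\check K$ one repeats the computation in the dual planar algebra: by Remark~\ref{rem:PassToDual} this simply interchanges $a_R^{ST}$ and $b_R^{ST}$ throughout, after which the analogous kernel computation returns $\check K$. Since the $3333$ graph is self-dual, one expects $\check K=K$, exactly as stated, and this agreement is itself a useful consistency check. The step needing the most care is arithmetical rather than conceptual: every entry of $M$ is assembled from the cubic moments and chiralities, which are produced by a computer calculation in the graph planar algebra, so these algebraic numbers must be pinned down exactly (as roots of the indicated integer polynomials, e.g. the entry $\tfrac16(-3-\sqrt5)$) before the kernel is computed, rather than being used only to numerical precision. The independent recomputation of $\qtac$ directly from the graph planar algebra described in Subsection~\ref{sec:Checking}, carried out in {\tt QuadraticTangles.nb}, guards against arithmetic error in this step.
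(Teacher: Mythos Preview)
Your proposal is correct and follows essentially the same approach as the paper: the paper's proof consists precisely of substituting the moments, chiralities, and quantum integers from the appendices into the formulas of Subsection~\ref{sec:QTAC} (in particular Proposition~4.4.2 of \cite{math/1007.1158}) to assemble the $4\times4$ matrix and then reading off its nullspace, all of which is carried out and verified in the accompanying \texttt{Mathematica} notebook. Your write-up is in fact more explicit than the paper's, which simply refers back to the general recipe and to the notebook for the arithmetic.
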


\begin{lem}
In particular, we have
$$
K
\begin{pmatrix}
f(A\circ A)\\
f(A\circ B)\\
f(B\circ A)\\
f(B\circ B)
\end{pmatrix}
=J
\begin{pmatrix}
f\cdot j(\check{A})\\
f\cdot j(\check{B})
\end{pmatrix}
\text{ and }
\check{K}
\begin{pmatrix}
f(\check{A}\circ\check{A})\\
f(\check{A}\circ\check{B})\\
f(\check{B}\circ\check{A})\\
f(\check{B}\circ\check{B})
\end{pmatrix}
=
\check{J}
\begin{pmatrix}
f\cdot j(A)\\
f\cdot j(B)
\end{pmatrix},$$
where
\begin{align*}
J &= \begin{pmatrix}
\frac{1}{8} \left(\sqrt{5}-1\right) & \frac{1}{4} \left(-2-\sqrt{5}\right) \\
\frac{1}{8} \left(-3-3 \sqrt{5}\right) & \frac{1}{8} \left(1-\sqrt{5}\right)\end{pmatrix}\\
\intertext{and}
\check{J} &= \begin{pmatrix}
\lambda_{1024,0,-1344,0,121}^{(-1.102)} & \lambda_{1024,0,-96,0,1}^{(-0.2860)} \\
\lambda_{1024,0,-864,0,81}^{(-0.3278)} & \lambda_{1024,0,-1344,0,121}^{(1.102)}\end{pmatrix}.\\
\end{align*}
\end{lem}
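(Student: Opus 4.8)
The plan is to follow the procedure of Subsection \ref{sec:JM} verbatim, feeding in the specific moments, chiralities, and rotational eigenvalues of the $3333$ generators $A,B$. By the preceding lemma the two rows of $K$ already produce elements $v_i = \sum_{S,T\in\fB}\gamma^i_{S,T}\,f(S\circ T)$ that lie in the annular consequences $\gA(A,B,\emptyset)$, so the content here is simply to read off the coefficients $\mu^i_A,\mu^i_B$ expressing each $v_i$ against the jellyfish elements, and likewise $\check\mu^i_A,\check\mu^i_B$ for the dual side.

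First I would expand the projection $P_{\gA(A,B)}(S\circ T)$ using Equation \eqref{eqn:CIAB}, which writes it in the dual annular basis as a combination of $\widehat{\cup}_{n+1}(R)$ and $\widehat{\cup}_0(R)$ with coefficients $\sigma_R^n a_R^{ST}$ and $\sigma_T^{-1}\sigma_S b_R^{ST}$. Here the cubic moments $a_R^{ST}=\Tr(RST)$ and $b_R^{ST}=\Tr(\check R\check S\check T)$, the chiralities $\sigma_R$, and the eigenvalues $\omega_R$ are exactly the data tabulated for $3333$ in Appendix \ref{sec:moments}. Next I would convert to the genuine annular basis $\cup_i(R)$ via the change of basis $V=\overline{W^{-1}}$ of Remark \ref{rem:DualAnnularBasisToAnnularBasis} (equivalently the Proposition 4.2.9 formulas quoted there), and then multiply by $f^{(2n+2)}$ along the bottom. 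Since any cap landing on top of the Jones--Wenzl idempotent vanishes, this kills every $\cup_i(R)$ with $i\neq 0$ and isolates $\cup_0(A)=j(\check A)$ and $\cup_0(B)=j(\check B)$, which in box form are precisely $f\cdot j(\check A)$ and $f\cdot j(\check B)$. Collecting the surviving coefficients row by row gives the matrix $J$ and hence the identity $K(\dots)=J(\dots)$. For the checked statement I would run the identical computation in the dual planar algebra as in Remark \ref{rem:PassToDual}, which amounts to interchanging the roles of $a_R^{ST}$ and $b_R^{ST}$; this produces $\check J$.

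Conceptually there is no obstacle here: the identity is an automatic consequence of the formulas already assembled in Section \ref{sec:ComputingJellyfishRelations}, and the only genuine work is arithmetic in the relevant number field. The one place demanding care is the bookkeeping of correction factors --- the powers of $\sigma$ and the signs $(-1)^{n+1}$ appearing in \eqref{eqn:CIAB} and in the Proposition 4.2.9 expansion, together with the even/odd-$n$ conventions of Examples \ref{ex:FourierTransform} and \ref{ex:CorrectionFactors} --- since a slip there would corrupt the entries of $J$ while leaving the structure of the argument untouched. For this reason I would, exactly as the authors do, discharge the actual evaluation to {\tt QuadraticTangles.nb}, and cross-check the resulting $J^{L}K$ numerically against a direct graph-planar-algebra computation in the manner described in Subsection \ref{sec:Checking}.
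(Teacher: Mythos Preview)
Your proposal is correct and matches the paper's own approach: the authors state explicitly that the proofs of these lemmas are ``simply substituting the appropriate quantities (moments, chiralities, etc.) into the formulas above,'' with the arithmetic delegated to the {\tt QuadraticTangles.nb} notebook. Your outline of feeding the $3333$ data through Equation~\eqref{eqn:CIAB}, changing basis via Remark~\ref{rem:DualAnnularBasisToAnnularBasis}, and isolating the $\cup_0$ terms by applying $f^{(2n+2)}$ is exactly the procedure of Subsection~\ref{sec:JM}, and your handling of the dual side via Remark~\ref{rem:PassToDual} is likewise what the paper does.
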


\begin{lem}
The elements $A$ and $B$ satisfy the box jellyfish relations
$$
\begin{pmatrix}
f\cdot j(\check{A})\\
f\cdot j(\check{B})
\end{pmatrix}
=J^{L}K
\begin{pmatrix}
f(A\circ A)\\
f(A\circ B)\\
f(B\circ A)\\
f(B\circ B)
\end{pmatrix}
\text{ and }
\begin{pmatrix}
f\cdot j(A)\\
f\cdot j(B)
\end{pmatrix}
=
\check{J}^{L}\check{K}
\begin{pmatrix}
f(\check{A}\circ\check{A})\\
f(\check{A}\circ\check{B})\\
f(\check{B}\circ\check{A})\\
f(\check{B}\circ\check{B})
\end{pmatrix}
$$
where
\begin{align*}
J^{L}K &= \begin{pmatrix}
\frac{1}{2} \left(\sqrt{5}-2\right) & \frac{1}{4} \left(-1-\sqrt{5}\right) & \frac{1}{4} \left(-1-\sqrt{5}\right) & \frac{1}{12} \left(1-\sqrt{5}\right) \\
\frac{1}{4} \left(3-3 \sqrt{5}\right) & \frac{1}{2} \left(2-\sqrt{5}\right) & \frac{1}{2} \left(2-\sqrt{5}\right) & \frac{1}{4} \left(1+\sqrt{5}\right)\end{pmatrix}\\
\intertext{and}
\check{J}^{L}\check{K} &= \begin{pmatrix}
\lambda_{64,0,-216,0,121}^{(-0.8422)} & \lambda_{64,0,-24,0,1}^{(-0.2185)} & \lambda_{64,0,-24,0,1}^{(-0.2185)} & \lambda_{5184,0,-3024,0,121}^{(0.7349)} \\
\lambda_{64,0,-1296,0,81}^{(-0.2504)} & \lambda_{64,0,-216,0,121}^{(0.8422)} & \lambda_{64,0,-216,0,121}^{(0.8422)} & \lambda_{64,0,-24,0,1}^{(0.2185)}\end{pmatrix}.\\
\end{align*}
\end{lem}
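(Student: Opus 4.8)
The plan is to follow Subsection~\ref{sec:JF} verbatim. The preceding lemma already supplies the identity
$$
K\begin{pmatrix} f(A\circ A)\\ f(A\circ B)\\ f(B\circ A)\\ f(B\circ B)\end{pmatrix}
= J\begin{pmatrix} f\cdot j(\check{A})\\ f\cdot j(\check{B})\end{pmatrix}
$$
together with the explicit jellyfish matrix $J$, so the only task is to solve for the column of box jellyfish elements on the right. First I would left-multiply by a left inverse $J^{L}$ of $J$; since $J^{L}J = I_2$, this isolates $\bigl(f\cdot j(\check{A}),\, f\cdot j(\check{B})\bigr)^{T} = J^{L}K(\cdots)$, which is precisely the first asserted relation. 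The checked relation follows identically after passing to the dual planar algebra (Remark~\ref{rem:PassToDual}), using $\check{J},\check{K}$ in place of $J,K$.

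The one genuine hypothesis to verify is that $J$ and $\check{J}$ have rank $2$, so that the left inverse exists and the formula $J^{L} = (J^{*}J)^{-1}J^{*}$ produces an honest left inverse. The abstract guarantee comes from \cite{1208.1564}, which forces a $1$-strand jellyfish presentation for a spoke subfactor; but because every entry of $J$ is an explicit algebraic number, I would confirm rank $2$ directly by checking that $J^{*}J \in M_2$ is invertible in the relevant number field. For the 3333 graph this field is $\Rational(\sqrt{5})$ for the unchecked matrix $J$, but an extension containing the roots of several quartics (e.g.\ $1024x^{4}-1344x^{2}+121$) for $\check{J}$.

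Granting rank $2$, the rest is pure linear algebra over that number field: form $J^{*}J$, invert it, assemble $J^{L} = (J^{*}J)^{-1}J^{*}$, and compute the $2\times 4$ product $J^{L}K$; matching the result against the displayed matrix of algebraic numbers $\lambda^{(z)}_{a_n,\dots,a_0}$ finishes the unchecked case, and the same computation with $\check{J},\check{K}$ finishes the checked case. As noted in the introduction to Section~\ref{sec:relations}, this substitution is carried out exactly by the \code{Mathematica} notebook \code{QuadraticTangles.nb}, with each entry certified as the stated root of its polynomial via the numerical approximation appearing in the superscript.

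The hard part will not be conceptual but arithmetic, and it is concentrated in the checked direction: the entries of $\check{J}$ are high-degree algebraic numbers, so $\check{J}^{*}\check{J}$, its inverse, and the final product $\check{J}^{L}\check{K}$ all live in a number field substantially larger than $\Rational(\sqrt{5})$. Exact computation there, while routine for the computer, must be organized to keep the field extension controlled and, crucially, to certify that $\check{J}$ does not accidentally drop rank; this is the single point at which the existence guarantee of \cite{1208.1564} is cross-checked against the concrete matrix produced by the quadratic tangles method.
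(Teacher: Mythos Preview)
Your proposal is correct and follows essentially the same approach as the paper: the paper's own proof is simply to substitute the explicit moments and chiralities into the formulas of Subsection~\ref{sec:JF}, verify that $J,\check{J}$ have rank~$2$, form the left inverse $J^{L}=(J^{*}J)^{-1}J^{*}$, and carry out the resulting matrix product in \code{QuadraticTangles.nb}. Your write-up is, if anything, slightly more explicit about where the computational weight lies than the paper itself.
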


\subsection{3311}
\begin{lem}
The linear combinations
$$K\begin{pmatrix}
A\circ A\\
A\circ B\\
B\circ A\\
B\circ B
\end{pmatrix}
\text{ and }
\check{K}
\begin{pmatrix}
\check{A}\circ\check{A}\\
\check{A}\circ\check{B}\\
\check{B}\circ\check{A}\\
\check{B}\circ\check{B}
\end{pmatrix}
$$
lie in annular consequences, where
\begin{align*}
K &= \begin{pmatrix}
1 & 0 & 0 & 0 \\[6pt]
0 & 1 & 0 & 0 \\[6pt]
0 & 0 & 1 & 0 \\[6pt]
0 & 0 & 0 & 1\end{pmatrix}\\
\intertext{and}\check{K} &= \begin{pmatrix}
1 & 0 & 0 & 0 \\[6pt]
0 & 1 & 0 & 0 \\[6pt]
0 & 0 & 1 & 0 \\[6pt]
0 & 0 & 0 & 1\end{pmatrix}.\\
\end{align*}
\end{lem}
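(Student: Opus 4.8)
The plan is to specialize the general procedure of Subsection~\ref{sec:QTAC} to the particular generators $A,B\in P_{n,+}$ of the $3311$ graph recorded in Appendix~\ref{sec:generators}. Recall that $\qtac$ is obtained as the kernel of the $4\times 4$ matrix
$$
M_{(S,T),(P,Q)} = \langle [S\circ T - P_{\gA(A,B)}(S\circ T)],\, P\circ Q\rangle, \qquad (S,T),(P,Q)\in\fB^2 ,
$$
each entry of which is the difference of the two explicit expressions given there: the total pairing $\langle S\circ T, P\circ Q\rangle = \frac{1}{[n]}\Tr(QT)\Tr(SP)$, and the pairing $\langle P_{\gA(A,B)}(S\circ T), P\circ Q\rangle$ of the annular projections, the latter assembled from the cubic moments $a_R^{ST}=\Tr(RST)$ and $b_R^{ST}=\Tr(\check R\check S\check T)$, the chiralities $\sigma_R$, the rotational eigenvalues $\omega_R$, and $W_R$. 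Since $P_\bullet^\Gamma$ is unitary, $M$ is the genuine Gram matrix of the components $S\circ T - P_{\gA(A,B)}(S\circ T)$ orthogonal to the annular consequences; it is positive semidefinite, its right-kernel is exactly $\qtac$, and $\dim(\qtac)=4$ holds if and only if $M$ is the zero matrix. So it suffices to evaluate $M$ and exhibit its kernel.

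First I would assemble the input data: the chiralities $\sigma_A,\sigma_B$ and the rotational eigenvalues $\omega_A,\omega_B$, together with the fact (Lemma~\ref{lem:check-structure-coefficients}) that $\{A,B,f^{(n)}\}$ is closed under multiplication, which guarantees that the cubic moments $a_R^{ST},b_R^{ST}$ are scalars so that the formulas of Subsection~\ref{sec:QTAC} apply; the numerical values of those moments are then read from Appendix~\ref{sec:moments}, evaluated with $[2]=\delta$ the modulus of the $3311$ planar algebra. Substituting these into the two pairing formulas and simplifying in exact arithmetic in the relevant number field, I expect every entry of $M$ to cancel, so that $M\equiv 0$. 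This is precisely the hoped-for equality $\dim(\qtac)=4$ attached to the annular multiplicities $*20$: no genuinely new lowest-weight vector appears at depth $n+1$, so \emph{each} quadratic tangle $S\circ T$ already lies in $\gA(A,B,\emptyset)$. Consequently the four standard basis vectors form a basis of $\qtac$, and we may take $K$ to be the $4\times 4$ identity matrix.

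For the checked statement I would repeat the computation in the dual planar algebra exactly as in Remark~\ref{rem:PassToDual}, which amounts to interchanging the roles of $a_R^{ST}$ and $b_R^{ST}$; the same cancellation occurs, yielding $\check{K}=I_4$ as well.

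The main obstacle is arithmetic rather than conceptual. To run the formulas one must have the exact cubic moments of $A,B$ in hand, and these ultimately come from multiplying the (large) matrices that represent the generators in the graph planar algebra of $3311$ and taking traces, a step that is carried out by computer as described in the Introduction and tabulated in Appendix~\ref{sec:moments}. Given those moments, the only real content is verifying the exact cancellation that forces $M\equiv 0$; this is the step I would scrutinize most carefully, and it is independently cross-checked numerically in \texttt{QuadraticTangles.nb} by computing $\qtac$ directly in the graph planar algebra as described in Subsection~\ref{sec:Checking}.
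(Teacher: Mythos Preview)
Your proposal is correct and follows essentially the same approach as the paper: the paper's proof consists of substituting the moments and chiralities from the appendices into the formulas of Subsection~\ref{sec:QTAC} (carried out in the accompanying \texttt{Mathematica} notebook) and reading off the nullspace, which for $3311$ is the whole $4$-dimensional space. Your added observation that $M$ is the Gram matrix of the components orthogonal to $\gA(A,B)$, so that $K=I_4$ is equivalent to $M\equiv 0$, and your linking of this to the annular multiplicities $*20$, are nice conceptual glosses that the paper only hints at in the examples of Subsection~\ref{sec:QTAC}.
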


\begin{lem}
In particular, we have
$$
K
\begin{pmatrix}
f(A\circ A)\\
f(A\circ B)\\
f(B\circ A)\\
f(B\circ B)
\end{pmatrix}
=J
\begin{pmatrix}
f\cdot j(\check{A})\\
f\cdot j(\check{B})
\end{pmatrix}
\text{ and }
\check{K}
\begin{pmatrix}
f(\check{A}\circ\check{A})\\
f(\check{A}\circ\check{B})\\
f(\check{B}\circ\check{A})\\
f(\check{B}\circ\check{B})
\end{pmatrix}
=
\check{J}
\begin{pmatrix}
f\cdot j(A)\\
f\cdot j(B)
\end{pmatrix},$$
where
\begin{align*}
J &= \begin{pmatrix}
0 & \frac{1}{33} \left(-7-3 \sqrt{3}\right) \\[6pt]
\lambda_{144,0,312,0,121}^{(-0.7113 i)} & \frac{1}{6} \left(1-\sqrt{3}\right) \\[6pt]
\lambda_{144,0,312,0,121}^{(0.7113 i)} & \frac{1}{6} \left(1-\sqrt{3}\right) \\[6pt]
0 & \frac{1}{12} \left(7 \sqrt{3}-9\right)\end{pmatrix}\\
\intertext{and}
\check{J} &= \begin{pmatrix}
\lambda_{7776,0,-3672,0,121}^{(-0.1888)} & \lambda_{58806,0,-486,0,1}^{(-0.0621864)} \\[6pt]
\lambda_{1536,0,1632,0,121}^{(-0.2832 i)} & \lambda_{864,0,216,0,1}^{(0.4953 i)} \\[6pt]
\lambda_{1536,0,1632,0,121}^{(0.2832 i)} & \lambda_{864,0,216,0,1}^{(-0.4953 i)} \\[6pt]
\lambda_{24576,0,-2414976,0,1771561}^{(0.860)} & \lambda_{1536,0,-1632,0,121}^{(0.2832)}\end{pmatrix}.\\
\end{align*}
\end{lem}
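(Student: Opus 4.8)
The plan is to read this off as the specialization to the 3311 generators $A,B$ (recorded in Appendix \ref{sec:moments}) of the general construction of jellyfish matrices from Subsection \ref{sec:JM}. The preceding lemma exhibits the matrix $K$, which for 3311 is the $4\times 4$ identity: since this graph has annular multiplicities $*20$, both $\qtac$ and $\qtac^\vee$ are $4$-dimensional, so each of the four quadratic tangles $A\circ A,\,A\circ B,\,B\circ A,\,B\circ B$ already lies in $\gA(A,B,\emptyset)$ individually, and likewise for the checked versions. Hence the content of the present lemma is simply to express each boxed quadratic tangle $f(S\circ T)$ as a linear combination $\mu_A\,[f\cdot j(\check A)]+\mu_B\,[f\cdot j(\check B)]$ and to collect the pairs $(\mu_A,\mu_B)$ as the four rows of $J$ (and dually for $\check J$).

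First I would apply Equation \eqref{eqn:CIAB} to write the annular projection $P_{\gA(A,B)}(S\circ T)$ in the dual annular basis as $\sum_{R\in\fB}\sigma_R^n a_R^{ST}\,\widehat{\cup}_{n+1}(R)+\sigma_T^{-1}\sigma_S b_R^{ST}\,\widehat{\cup}_0(R)$. I would then rewrite $\widehat{\cup}_0(R)$ and $\widehat{\cup}_{n+1}(R)$ in the ordinary annular basis using the change-of-basis matrix $V=\overline{W^{-1}}$ of Remark \ref{rem:DualAnnularBasisToAnnularBasis} (equivalently, Proposition 4.2.9 of \cite{math/1007.1158}), and isolate the coefficients of $\cup_0(A)$ and $\cup_0(B)$. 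Multiplying the resulting identity by $f^{(2n+2)}$ on the bottom kills every capped term: the $\cup_{n+1}$ contributions and the remainder lying in $\overline{\gA(A,B,\emptyset)}$ all vanish, while $\cup_0(R)$ becomes $f\cdot j(\check R)$. The surviving coefficients are precisely the entries of $J$. The matrix $\check J$ is produced by the identical computation after reversing the shading, which by Remark \ref{rem:PassToDual} amounts to interchanging $a_R^{ST}$ and $b_R^{ST}$.

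The inputs to these formulas are the chiralities $\sigma_R$, the rotational eigenvalues $\omega_R$, the weights $W_R=q^{2n+2}+q^{-2n+2}-\omega_R-\omega_R^{-1}$, and above all the cubic moments $a_R^{ST}=\Tr(RST)$ and $b_R^{ST}=\Tr(\check R\check S\check T)$, all extracted from the explicit generators in the 3311 graph planar algebra. I expect the only genuine difficulty to be computational rather than conceptual: these moments are traces of large matrices, and the resulting entries of $J$ are algebraic numbers — the roots of the quartics encoded by the $\lambda$-notation — so the intervening linear algebra must be carried out with exact arithmetic in an appropriate number field rather than numerically. This is exactly what {\tt QuadraticTangles.nb} automates. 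As a safeguard against arithmetic error I would finally run the two independent verifications of Subsection \ref{sec:Checking}: comparing the computed $J,\check J$ against values obtained directly and numerically in the graph planar algebra, and exploiting the redundancy of Remark \ref{rem:ExtraBoxJellyfishRelations}, namely that the ``bent'' relations isolating $\cup_{n+1}$ must be consistent with those isolating $\cup_0$.
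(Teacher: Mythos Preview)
Your proposal is correct and follows essentially the same approach as the paper: the paper states that the proofs of these lemmas are ``simply substituting the appropriate quantities (moments, chiralities, etc.) into the formulas above,'' referring to the procedure of Subsection \ref{sec:JM}, with the computation carried out in {\tt QuadraticTangles.nb}. Your description of applying Equation \eqref{eqn:CIAB}, passing to the annular basis via Remark \ref{rem:DualAnnularBasisToAnnularBasis}, and then multiplying by $f^{(2n+2)}$ to isolate the $\cup_0$ coefficients is exactly this procedure specialized to the 3311 data.
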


\begin{lem}
The elements $A$ and $B$ satisfy the box jellyfish relations
$$
\begin{pmatrix}
f\cdot j(\check{A})\\
f\cdot j(\check{B})
\end{pmatrix}
=J^{L}K
\begin{pmatrix}
f(A\circ A)\\
f(A\circ B)\\
f(B\circ A)\\
f(B\circ B)
\end{pmatrix}
\text{ and }
\begin{pmatrix}
f\cdot j(A)\\
f\cdot j(B)
\end{pmatrix}
=
\check{J}^{L}\check{K}
\begin{pmatrix}
f(\check{A}\circ\check{A})\\
f(\check{A}\circ\check{B})\\
f(\check{B}\circ\check{A})\\
f(\check{B}\circ\check{B})
\end{pmatrix}
$$
where
\begin{align*}
J^{L}K &= \begin{pmatrix}
0 & \lambda_{121,0,78,0,9}^{(0.7029 i)} & \lambda_{121,0,78,0,9}^{(-0.7029 i)} & 0 \\[6pt]
\frac{-137208-77672 \sqrt{3}}{172163} & \frac{-33708-32332 \sqrt{3}}{172163} & \frac{-33708-32332 \sqrt{3}}{172163} & \frac{52626+80142 \sqrt{3}}{172163}\end{pmatrix}\\
\intertext{and}
\check{J}^{L}\check{K} &= \begin{pmatrix}
\lambda_{\begin{subarray}{l}1559184260929,\\0,\\-65569974336,\\0,\\497664\\\mbox{}\end{subarray}}^{(-0.205052)} & \lambda_{3872,0,648,0,27}^{(0.27984 i)} & \lambda_{3872,0,648,0,27}^{(-0.27984 i)} & \lambda_{\begin{subarray}{l}1559184260929,\\0,\\-1362157739172,\\0,\\2305430424\\\mbox{}\end{subarray}}^{(0.93378)} \\[6pt]
\lambda_{\begin{subarray}{l}1559184260929,\\0,\\-23898282768,\\0,\\33872256\\\mbox{}\end{subarray}}^{(-0.11725)} & \lambda_{512,0,4896,0,3267}^{(-0.850 i)} & \lambda_{512,0,4896,0,3267}^{(0.850 i)} & \lambda_{\begin{subarray}{l}3118368521858,\\0,\\-1989820501362,\\0,\\313826716467\\\mbox{}\end{subarray}}^{(0.53393)}\end{pmatrix}.\\
\end{align*}
\end{lem}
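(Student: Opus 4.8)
The plan is to obtain these box jellyfish relations by inverting the jellyfish matrices produced in the preceding lemma, precisely following the recipe of Subsection~\ref{sec:JF}. That lemma supplies, for $\Gamma = 3311$, the matrix identity
$$
K\begin{pmatrix} f(A\circ A)\\ f(A\circ B)\\ f(B\circ A)\\ f(B\circ B)\end{pmatrix} = J\begin{pmatrix} f\cdot j(\check{A})\\ f\cdot j(\check{B})\end{pmatrix}
$$
together with the explicit $4\times 2$ matrix $J$ and its checked counterpart $\check{J}$. Because this is a genuine identity among elements of $P_{n+1,+}$, any linear-algebraic manipulation of the coefficient matrices transports directly into a relation among the diagrams.

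First I would confirm that $J$ and $\check{J}$ each have rank $2$. For $3311$ this is immediate from the displayed entries, since the two columns of $J$ are manifestly not proportional (its first column vanishes in the first and last slots, where the second column does not). This full column rank is also exactly what is expected a priori: by \cite{1208.1564} a spoke subfactor admits $1$-strand jellyfish relations, which is the assertion that $f\cdot j(\check{A})$ and $f\cdot j(\check{B})$ can be recovered from the quadratic tangles, forcing $J$ to be left-invertible. Granting rank $2$, the Gram matrix $J^{*}J \in M_2(\Complex)$ is invertible, so $J^{L} = (J^{*}J)^{-1}J^{*}$ is a bona fide left inverse, $J^{L}J = I_2$.

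The relation then follows formally: left-multiplying the displayed identity by $J^{L}$ gives
$$
J^{L}K\begin{pmatrix} f(A\circ A)\\ f(A\circ B)\\ f(B\circ A)\\ f(B\circ B)\end{pmatrix} = (J^{L}J)\begin{pmatrix} f\cdot j(\check{A})\\ f\cdot j(\check{B})\end{pmatrix} = \begin{pmatrix} f\cdot j(\check{A})\\ f\cdot j(\check{B})\end{pmatrix},
$$
which is the first claimed box jellyfish relation; the checked relation is obtained identically after passing to the dual planar algebra as in Remark~\ref{rem:PassToDual}. All that remains is to check that the product $J^{L}K$ coincides with the displayed $2\times 4$ matrix, and likewise that $\check{J}^{L}\check{K}$ matches its displayed form.

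The main obstacle is precisely this final identification of $J^{L}K$. The entries of $J,\check{J},K,\check{K}$ are algebraic numbers lying in a number field generated by the roots appearing in the statement (the $\lambda$'s are roots of the displayed high-degree polynomials), so forming $(J^{*}J)^{-1}$ and then the product is not a closed-form hand calculation but requires exact arithmetic in that field. I would carry this out in the {\tt QuadraticTangles.nb} notebook and then certify the output in the two independent ways described in Subsection~\ref{sec:Checking}: comparing the coefficients against numerical linear algebra performed directly in the graph planar algebra of $3311$, and re-deriving the jellyfish relation via Bigelow's $1$-cup Jones--Wenzl trick. Agreement of these independent routes is what pins down the exact coefficient matrix recorded in the lemma.
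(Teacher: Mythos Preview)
Your proposal is correct and follows exactly the paper's approach: invert the rank-$2$ jellyfish matrix via $J^{L}=(J^{*}J)^{-1}J^{*}$ as in Subsection~\ref{sec:JF}, left-multiply the identity from the preceding lemma, and verify the resulting product $J^{L}K$ by machine arithmetic in the relevant number field. One small correction: the second check you invoke, Bigelow's $1$-cup Jones--Wenzl trick, is only carried out for $2221$ in the paper (Subsection~\ref{sec:Checking} explicitly says this is infeasible for the larger graphs), so for $3311$ only the first numerical cross-check against the graph planar algebra is available.
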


\subsection{2221}\label{sec:Relations2221}
\begin{lem}
The linear combinations
$$K\begin{pmatrix}
A\circ A\\
A\circ B\\
B\circ A\\
B\circ B
\end{pmatrix}
\text{ and }
\check{K}
\begin{pmatrix}
\check{A}\circ\check{A}\\
\check{A}\circ\check{B}\\
\check{B}\circ\check{A}\\
\check{B}\circ\check{B}
\end{pmatrix}
$$
lie in annular consequences, where
\begin{align*}
K &= \begin{pmatrix}
1 & 0 & 0 & \frac{1}{50} \left(-23-7 \sqrt{21}\right) \\[6pt]
0 & 1 & 0 & \lambda_{\begin{subarray}{l}625,\\0,\\2300,\\0,\\10464,\\0,\\-7360,\\0,\\6400\\\mbox{}\end{subarray}}^{(1.050-1.818 i)} \\[6pt]
0 & 0 & 1 & \lambda_{\begin{subarray}{l}625,\\0,\\2300,\\0,\\10464,\\0,\\-7360,\\0,\\6400\\\mbox{}\end{subarray}}^{(1.050+1.818 i)}\end{pmatrix}\\
\intertext{and}\check{K} &= \begin{pmatrix}
1 & 0 & 0 & \frac{1}{50} \left(-23-7 \sqrt{21}\right) \\[6pt]
0 & 1 & 0 & \lambda_{\begin{subarray}{l}625,\\0,\\2300,\\0,\\10464,\\0,\\-7360,\\0,\\6400\\\mbox{}\end{subarray}}^{(1.050-1.818 i)} \\[6pt]
0 & 0 & 1 & \lambda_{\begin{subarray}{l}625,\\0,\\2300,\\0,\\10464,\\0,\\-7360,\\0,\\6400\\\mbox{}\end{subarray}}^{(1.050+1.818 i)}\end{pmatrix}.\\
\end{align*}
\end{lem}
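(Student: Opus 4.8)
The plan is to reduce the assertion to a single finite-dimensional kernel computation over the number field in which the $2221$ moments live, following the recipe of Subsection \ref{sec:QTAC} verbatim. The logical core is this: an element $x=\sum_{S,T\in\fB}\gamma_{S,T}(S\circ T)$ of the span of the quadratic tangles lies in $\gA(A,B,\emptyset)$ if and only if $x-P_{\gA(A,B)}(x)=0$; since $P_{\gA(A,B)}$ is the orthogonal projection, $x-P_{\gA(A,B)}(x)$ is orthogonal to $\gA(A,B,\emptyset)$, so this vanishes if and only if $\langle x-P_{\gA(A,B)}(x),\,P\circ Q\rangle=0$ for all $P,Q\in\fB$. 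Hence the coefficient vectors realizing $\qtac$ are exactly the kernel of the Hermitian $4\times 4$ Gram matrix
$$
G_{(S,T),(P,Q)}=\langle (S\circ T)-P_{\gA(A,B)}(S\circ T),\,P\circ Q\rangle,
$$
and the lemma asserts that this kernel is three-dimensional (as predicted by the $*21$ annular multiplicities of $2221$) and is spanned by the rows of $K$.

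First I would gather the inputs. From Appendix \ref{sec:generators} we take the explicit generators $A,B$, and from Appendix \ref{sec:moments} (equivalently via the matrix products of Lemma \ref{lem:check-structure-coefficients}) the quadratic and cubic moments $\Tr(ST)$ and $a_R^{ST}=\Tr(RST)$, together with the chiralities $\sigma_S$, the rotational eigenvalues $\omega_R=\sigma_R^2$, and the quantum integers $[n],[n+1],[2n+2]$ for the relevant depth $n$ and modulus $\delta=\sqrt{3+\sqrt5}$. For the dual statement I pass to the shading-reversed planar algebra as in Remark \ref{rem:PassToDual}, which simply replaces each $a_R^{ST}$ by $b_R^{ST}=\Tr(\check R\check S\check T)$; carrying out the identical computation then produces $\check K$, which I would record equals $K$.

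Next I would substitute these data into the two closed formulas of Subsection \ref{sec:QTAC}: the quadratic-tangle pairing $\langle S\circ T,P\circ Q\rangle=\frac{1}{[n]}\Tr(QT)\Tr(SP)$ and the Proposition $4.4.2$ expansion of $\langle P_{\gA(A,B)}(S\circ T),P\circ Q\rangle$ as the sum over $R\in\fB$ weighted by $W_R^{-1}$ with $W_R=q^{2n+2}+q^{-2n+2}-\omega_R-\omega_R^{-1}$. This assembles $G$ as an explicit matrix over the number field, after which an exact row reduction exhibits a three-dimensional kernel; normalizing so the pivots sit on $A\circ A,A\circ B,B\circ A$ recovers $K$ exactly, with the fourth-column entries identified as $\frac{1}{50}(-23-7\sqrt{21})$ and the complex-conjugate pair of roots of $625x^8+2300x^6+10464x^4-7360x^2+6400$ selected in the statement.

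The main obstacle is computational rather than conceptual: the arithmetic must be carried out exactly in a number field of substantial degree — note the degree-$8$ minimal polynomial in the last column of $K$ — and the final kernel entries must then be pinned down as the specific roots named by the $\lambda^{(z)}_{\dots}$ notation, which combines exact algebra with numerical root selection. This is exactly the work delegated to {\tt QuadraticTangles.nb}: granting the moments of Appendix \ref{sec:moments}, verifying that the rows of $K$ annihilate $G$ is a direct substitution, and confirming they span the whole kernel is a rank check. As flagged in Subsection \ref{sec:Checking}, I would additionally cross-validate by computing a basis for $\qtac$ approximately and directly in the graph planar algebra and comparing, which guards against arithmetic error in the exact computation.
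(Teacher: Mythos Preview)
Your approach is essentially the paper's own: as stated at the start of Section \ref{sec:relations}, the proof of this lemma is nothing more than substituting the moments and chiralities for $2221$ into the formulas of Subsection \ref{sec:QTAC} and reading off a basis for the nullspace of the $4\times4$ matrix $G$, exactly as you describe, with the arithmetic delegated to {\tt QuadraticTangles.nb}. One concrete slip: you quote the modulus $\delta=\sqrt{3+\sqrt{5}}$, but this lemma is the $2221$ case, where $\delta=\sqrt{(5+\sqrt{21})/2}$ and $n=3$; using the wrong $\delta$ would corrupt every quantum integer and every $W_R$ in the formula, so be sure to carry the correct one through. A second, more minor point: your sentence ``$x$ lies in $\gA(A,B,\emptyset)$ if and only if $x-P_{\gA(A,B)}(x)=0$'' is not literally true, since $P_{\gA(A,B)}$ projects onto $\gA(A,B)$ rather than $\gA(A,B,\emptyset)=\gA(A,B)\oplus TL$; what the nullspace of $G$ actually detects is membership in $\gA(A,B)$, which is stronger than (and hence implies) the lemma's claim that the rows of $K$ lie in annular consequences.
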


\begin{lem}
In particular, we have
$$
K
\begin{pmatrix}
f(A\circ A)\\
f(A\circ B)\\
f(B\circ A)\\
f(B\circ B)
\end{pmatrix}
=J
\begin{pmatrix}
f\cdot j(\check{A})\\
f\cdot j(\check{B})
\end{pmatrix}
\text{ and }
\check{K}
\begin{pmatrix}
f(\check{A}\circ\check{A})\\
f(\check{A}\circ\check{B})\\
f(\check{B}\circ\check{A})\\
f(\check{B}\circ\check{B})
\end{pmatrix}
=
\check{J}
\begin{pmatrix}
f\cdot j(A)\\
f\cdot j(B)
\end{pmatrix},$$
where
\begin{align*}
J &= \begin{pmatrix}
\frac{1}{3} \left(-6-\sqrt{21}\right) & \lambda_{225,0,-393,0,-5}^{(1.326)} \\[6pt]
\lambda_{\begin{subarray}{l}2025,\\0,\\90855,\\0,\\1616571,\\0,\\931200,\\0,\\160000\\\mbox{}\end{subarray}}^{(1.680-4.996 i)} & \lambda_{9,9,12,-3,1}^{(-0.6319-1.0945 i)} \\[6pt]
\lambda_{\begin{subarray}{l}2025,\\0,\\90855,\\0,\\1616571,\\0,\\931200,\\0,\\160000\\\mbox{}\end{subarray}}^{(1.680+4.996 i)} & \lambda_{9,9,12,-3,1}^{(-0.6319+1.0945 i)}\end{pmatrix}\\
\intertext{and}
\check{J} &= \begin{pmatrix}
\frac{1}{3} \left(-6-\sqrt{21}\right) & \lambda_{225,0,-393,0,-5}^{(1.326)} \\[6pt]
\lambda_{\begin{subarray}{l}2025,\\0,\\90855,\\0,\\1616571,\\0,\\931200,\\0,\\160000\\\mbox{}\end{subarray}}^{(1.680-4.996 i)} & \lambda_{9,9,12,-3,1}^{(-0.6319-1.0945 i)} \\[6pt]
\lambda_{\begin{subarray}{l}2025,\\0,\\90855,\\0,\\1616571,\\0,\\931200,\\0,\\160000\\\mbox{}\end{subarray}}^{(1.680+4.996 i)} & \lambda_{9,9,12,-3,1}^{(-0.6319+1.0945 i)}\end{pmatrix}.\\
\end{align*}
\end{lem}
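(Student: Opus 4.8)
The statement is the ``in particular'' companion to the preceding lemma, asserting that after multiplying by a Jones--Wenzl idempotent the annular-consequence combinations recorded in $K$ (respectively $\check K$) become the explicit linear combinations of box jellyfish elements encoded by $J$ (respectively $\check J$). The plan is to run the recipe of Subsection~\ref{sec:JM} on the basis of $\qtac$ (respectively $\qtac^\vee$) furnished by the rows of $K$ (respectively $\check K$). Writing $v_i=\sum_{S,T\in\fB}\gamma^i_{S,T}(S\circ T)$ for the basis vector coming from the $i$-th row of $K$, I would compute the pair $(\mu^i_A,\mu^i_B)$ for which
$$
f^{(2n+2)}\cdot v_i=\mu^i_A\,[f\cdot j(\check A)]+\mu^i_B\,[f\cdot j(\check B)],
$$
and then verify that the matrix whose $i$-th row is $(\mu^i_A\ \ \mu^i_B)$ is the displayed $J$; assembling these rows is exactly the claimed matrix identity.

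For each fixed $v_i$ the computation is three mechanical steps. Since the preceding lemma guarantees $v_i\in\gA(A,B,\emptyset)$, I would first apply Equation~\eqref{eqn:CIAB} to express the annular-consequence part $P_{\gA(A,B)}(v_i)$ in the dual annular basis $\{\widehat\cup_0(R),\widehat\cup_{n+1}(R)\}_{R\in\fB}$, with coefficients assembled from the cubic moments $a^{ST}_R=\Tr(RST)$, $b^{ST}_R=\Tr(\check R\check S\check T)$ and the chiralities $\sigma_R$. Next I would pass from the dual annular basis to the annular basis $\{\cup_i(R)\}$ via the change-of-basis matrix $V=\overline{W^{-1}}$ of Remark~\ref{rem:DualAnnularBasisToAnnularBasis} (equivalently Proposition~4.2.9 of \cite{math/1007.1158}), where $W_R$ enters through the inner-product normalisations. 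Finally I would multiply by $f^{(2n+2)}$ along the bottom: every Temperley--Lieb term and every $\cup_j$ with $j\neq 0$ (including the $\cup_{n+1}$ terms, whose cup becomes a cap on the idempotent) acquires a turnback atop $f^{(2n+2)}$ and dies, so the only survivors are the $\cup_0(A),\cup_0(B)$ contributions, which become $f\cdot j(\check A),f\cdot j(\check B)$ and whose coefficients are the sought $\mu^i_A,\mu^i_B$. The checked matrix $\check J$ is produced by the identical computation in the shading-reversed planar algebra, interchanging $a^{ST}_R\leftrightarrow b^{ST}_R$ as in Remark~\ref{rem:PassToDual}; for the chosen 2221 generators the two computations return the same matrix, so $\check J=J$.

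The only genuine input, and hence where the real work lies, is the table of cubic moments $\Tr(RST)$ and $\Tr(\check R\check S\check T)$ together with the rotational eigenvalues $\omega_R=\sigma_R^2$ for the specific generators $A,B$ of the 2221 graph; once these are in hand, every subsequent manipulation is a closed-form substitution into Jones' formulas. I would compute these moments directly in the graph planar algebra of 2221, which reduces to multiplying the (large, but number-field-valued) matrices representing $R,S,T$ and taking a trace, exactly as tabulated in Appendix~\ref{sec:moments}; this is the single step that leans on the computer, and it is the main obstacle only in the bookkeeping sense that the matrices are sizeable. The remaining nuisance is purely presentational: the entries of $J$ are algebraic numbers recorded in the $\lambda$-root notation, so I would finish by confirming with {\tt RootApproximant} that each computed entry is the indicated root of its displayed integer polynomial and that the quoted approximate value isolates the correct root.
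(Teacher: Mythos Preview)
Your proposal is correct and follows essentially the same approach as the paper: the paper's proof is simply the blanket statement at the start of Section~\ref{sec:relations} that ``the proofs are simply substituting the appropriate quantities (moments, chiralities, etc.) into the formulas above,'' which is exactly the recipe of Subsection~\ref{sec:JM} that you spell out. Your identification of the computational input (the cubic moments of Appendix~\ref{sec:moments2221} and the chiralities from Appendix~\ref{sec:generators2221}) and of the final {\tt RootApproximant} step matches the paper's workflow precisely.
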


\begin{lem}
The elements $A$ and $B$ satisfy the box jellyfish relations
$$
\begin{pmatrix}
f\cdot j(\check{A})\\
f\cdot j(\check{B})
\end{pmatrix}
=J^{L}K
\begin{pmatrix}
f(A\circ A)\\
f(A\circ B)\\
f(B\circ A)\\
f(B\circ B)
\end{pmatrix}
\text{ and }
\begin{pmatrix}
f\cdot j(A)\\
f\cdot j(B)
\end{pmatrix}
=
\check{J}^{L}\check{K}
\begin{pmatrix}
f(\check{A}\circ\check{A})\\
f(\check{A}\circ\check{B})\\
f(\check{B}\circ\check{A})\\
f(\check{B}\circ\check{B})
\end{pmatrix}
$$
where
\begin{align*}
J^{L}K &= \begin{pmatrix}
\frac{1}{51} \left(33-8 \sqrt{21}\right) & \lambda_{\begin{subarray}{l}4228250625,\\0,\\18810887175,\\0,\\36065983311,\\0,\\203997780,\\0,\\960400\\\mbox{}\end{subarray}}^{(0.034194+0.063236 i)} & \lambda_{\begin{subarray}{l}4228250625,\\0,\\18810887175,\\0,\\36065983311,\\0,\\203997780,\\0,\\960400\\\mbox{}\end{subarray}}^{(0.034194-0.063236 i)} & \frac{659 \sqrt{21}-2049}{2550} \\[6pt]
\lambda_{2601,0,885,0,-125}^{(0.3277)} & \lambda_{2601,4896,18981,5700,925}^{(-0.1561+0.1682 i)} & \lambda_{2601,4896,18981,5700,925}^{(-0.1561-0.1682 i)} & \lambda_{65025,0,1149168,0,-6845}^{(-0.07717)}\end{pmatrix}\\
\intertext{and}
\check{J}^{L}\check{K} &= \begin{pmatrix}
\frac{1}{51} \left(33-8 \sqrt{21}\right) & \lambda_{\begin{subarray}{l}4228250625,\\0,\\18810887175,\\0,\\36065983311,\\0,\\203997780,\\0,\\960400\\\mbox{}\end{subarray}}^{(0.034194+0.063236 i)} & \lambda_{\begin{subarray}{l}4228250625,\\0,\\18810887175,\\0,\\36065983311,\\0,\\203997780,\\0,\\960400\\\mbox{}\end{subarray}}^{(0.034194-0.063236 i)} & \frac{659 \sqrt{21}-2049}{2550} \\[6pt]
\lambda_{2601,0,885,0,-125}^{(0.3277)} & \lambda_{2601,4896,18981,5700,925}^{(-0.1561+0.1682 i)} & \lambda_{2601,4896,18981,5700,925}^{(-0.1561-0.1682 i)} & \lambda_{65025,0,1149168,0,-6845}^{(-0.07717)}\end{pmatrix}.\\
\end{align*}
\end{lem}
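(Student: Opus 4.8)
The plan is to derive these box jellyfish relations purely by linear algebra from the preceding lemma, which already supplies the matrices $J,\check J$ and $K,\check K$ together with the identity
$$
K\begin{pmatrix} f(A\circ A)\\ f(A\circ B)\\ f(B\circ A)\\ f(B\circ B)\end{pmatrix}=J\begin{pmatrix} f\cdot j(\check A)\\ f\cdot j(\check B)\end{pmatrix}
$$
and its checked analogue. Writing $\mathbf q$ for the column of box quadratic tangles and $\mathbf j$ for the column of box jellyfish terms, the task is simply to solve the planar-algebra identity $K\mathbf q=J\mathbf j$ for $\mathbf j$. Here $J$ is the $3\times 2$ matrix displayed in the previous lemma (for $2221$ we have $k=3$, reflecting the annular multiplicities $*21$) and $K$ is the corresponding $3\times 4$ matrix.

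First I would confirm that $J$ has full column rank $2$. By \cite{1208.1564} a spoke subfactor admits a complete set of $1$-strand jellyfish relations, which is exactly the assertion that $f\cdot j(\check A)$ and $f\cdot j(\check B)$ can be recovered from the quadratic tangles; concretely the rank is read off from the explicit entries of $J$ above, as described in Subsection \ref{sec:JF}. Granted rank $2$, the Gram matrix $J^*J\in M_2(\Complex)$ is invertible, so the left inverse $J^{L}=(J^*J)^{-1}J^*$ satisfies $J^{L}J=I_2$. Since each row of $K\mathbf q=J\mathbf j$ is a genuine identity in $P_{n+1,+}$, left-multiplying by the scalar matrix $J^{L}$ takes valid linear combinations of these identities and yields
$$
J^{L}K\,\mathbf q=J^{L}J\,\mathbf j=\mathbf j,
$$
which is precisely the claimed relation $\mathbf j=J^{L}K\,\mathbf q$. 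The checked relation follows verbatim with $\check J^{L}\check K$, the only change being that one passes to the dual planar algebra as in Remark \ref{rem:PassToDual}. It then remains to evaluate the product $J^{L}K$ and verify that it equals the displayed matrix.

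The only genuinely laborious step is this last evaluation: the entries of $J$ and $K$ lie in a number field generated by roots of the high-degree polynomials encoded in the $\lambda$-notation, so forming $J^*J$, inverting it, and multiplying through must be carried out symbolically, which is done in the {\tt Mathematica} notebook {\tt QuadraticTangles.nb}. I expect the main obstacle to be arithmetic rather than conceptual, namely certifying that the resulting unwieldy algebraic expressions simplify to the stated closed forms. The one hypothesis on which exactness hinges is the rank-$2$ condition: once that is in hand, $J^{L}J=I_2$ is automatic and the left inverse genuinely inverts the relation rather than producing a mere least-squares approximation, so the relation $\mathbf j=J^{L}K\,\mathbf q$ holds on the nose.
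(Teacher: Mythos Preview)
Your proposal is correct and follows exactly the approach the paper takes: the paper's proof is simply to substitute the moments and chiralities into the formulas of Subsection \ref{sec:JF}, namely check that $J$ has rank $2$, form $J^{L}=(J^*J)^{-1}J^*$, left-multiply $K\mathbf q=J\mathbf j$ to obtain $\mathbf j=J^{L}K\mathbf q$, and then evaluate the product $J^{L}K$ in the {\tt Mathematica} notebook. Your only superfluous remark is the appeal to \cite{1208.1564} to justify the rank-$2$ condition---that result is an expectation, not a proof, since we do not yet know the planar algebra has spoke principal graphs; but you immediately correct yourself by noting the rank is read off directly from the explicit entries of $J$, which is what the paper does.
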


\section{Self-duality and calculating principal graphs}

We now know that our elements $A,B\in \cP\cA(\Gamma)$ generate an evaluable planar subalgebra $P^\Gamma_\bullet$, and hence a subfactor planar algebra. 
By the next lemma, we know that the principal graphs have the desired supertransitivity since we have 1-strand jellyfish relations.

\begin{lem}\label{lem:Supertransitive}
Suppose a planar algebra $P_\bullet$ is generated by uncappable elements $A_1,\dots, A_k\in P_{n,+}$ such that
\begin{enumerate}[(1)]
\item
the $A_j$'s satisfy 1-strand jellyfish relations, and
\item
the complex span of $\{ A_1,\dots, A_k,f^{(n)}\}$ forms an algebra under the usual multiplication. 
\end{enumerate}
Then $P_\bullet$ is $(n-1)$ supertransitive.
\end{lem}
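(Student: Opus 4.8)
The statement asserts that the planar algebra $P_\bullet$ is $(n-1)$ supertransitive, meaning $P_{k,+} = TL_{k,+}$ for $k = 0, \ldots, n-1$. The plan is to show that the jellyfish algorithm lets us evaluate \emph{every} closed diagram, and in particular that $P_\bullet$ is evaluable, and then to track the degrees carefully to see that no new elements appear below box space $n$. First I would recall the two-step jellyfish evaluation process described in Subsection \ref{sec:boxjellyfish}: given any closed diagram built from the generators $A_1, \ldots, A_k$, step (1) uses the 1-strand jellyfish relations (hypothesis (1)) to float every generator to the outer boundary, producing a linear combination of \emph{trains}; step (2) uses uncappability together with the algebra property (hypothesis (2)) to iteratively collapse adjacent generators. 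Since every nonzero train that is a closed diagram either is already a Temperley--Lieb diagram, contains a capped generator (hence vanishes, by uncappability), or contains two generators joined by at least $n$ strands (which can be multiplied using the algebra closure), this process terminates and expresses any closed diagram as a multiple of the empty diagram.

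The key point is then a \emph{degree} or \emph{weight} bookkeeping argument. To prove $(n-1)$ supertransitivity, I would take an arbitrary element $x \in P_{k,+}$ for $k < n$ and show it lies in $TL_{k,+}$. Equivalently, I would show that any diagram in $P_{k,+}$ involving at least one generator can be reduced, via the jellyfish algorithm, to a Temperley--Lieb diagram. The crucial observation is that each generator $A_j$ lives in $P_{n,+}$ and is uncappable, so it has $n$ strands on each side; running the jellyfish algorithm on a diagram in a low box space $P_{k,+}$ with $k < n$ forces at least one generator either to be capped (giving zero) or to be connected to the boundary or to another generator in a way that, because there are too few through-strands available when $k < n$, collapses it entirely into Temperley--Lieb. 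I would make this precise by inducting on the number of generators appearing in the diagram, using that removing one generator (via a cap or via multiplication $A_i A_j \in \operatorname{span}\{A_1, \ldots, A_k, f^{(n)}\}$) strictly decreases the count while staying inside the span of generators plus Temperley--Lieb.

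The main obstacle will be the step (2) reduction showing that when $k < n$ the generators genuinely cannot survive. The subtlety is that the 1-strand jellyfish relations in step (1) can \emph{increase} the number of generators (floating a jellyfish to the surface may create new ones), so I must confirm that the overall procedure still terminates and, more importantly, that in box spaces below depth $n$ the only surviving diagrams are in Temperley--Lieb. I expect this to follow from the standard fact, recalled in the excerpt, that a train which is a closed diagram and is not purely Temperley--Lieb must have either a capped generator or two generators connected by at least $n$ strands; in a $k$-box diagram with $k < n$ there is simply not enough room on the boundary to keep a generator uncapped and unmultiplied, so every generator is eventually eliminated. Finally, I would invoke evaluability (which the jellyfish algorithm establishes, and which is already guaranteed here) to conclude that $P_\bullet$ is a subfactor planar algebra with $P_{k,+} = TL_{k,+}$ for all $k \leq n-1$, which is exactly $(n-1)$ supertransitivity.
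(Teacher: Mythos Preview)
Your proposal is correct and follows essentially the same approach as the paper: apply the jellyfish relations to put an element of $P_{n-1,+}$ in train form, use uncappability and the algebra property to reduce until no generator is self-connected and no two adjacent generators share $n$ or more strands, and then observe that with only $2n-2$ boundary points there is no room left for any generator. The paper's proof is terser but makes exactly the same moves; your additional remarks about termination and the possibility of the jellyfish step increasing the number of generators are valid concerns that the paper leaves implicit, but they are resolved in the same way you indicate.
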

\begin{proof}
Suppose $x\in P_{n-1,+}$. Use the jellyfish relations to get a linear combination of diagrams in jellyfish form. Use the facts that the $A_j$'s are uncappable and that the $A_j$'s and $f^{(n)}$ form an algebra to reduce the linear combination of diagrams so that no two generators are connected by more than $n$ strands, and no generator is connected to itself. Since there are only $2n-2$ external boundary points, it is easy to see that the diagrams in the resulting linear combination have no generators, i.e., $x\in TL_{n-1,+}$.
\end{proof}

We now show that the principal graphs of the $P_\bullet^{\Gamma}$ planar algebra are, in fact, $(\Gamma,\Gamma)$.

\subsection{Self duality}
In this subsection, we show that $P_\bullet^{4442}, P_\bullet^{3333}$ and  $P_\bullet^{2221}$ are self-dual, i.e., there is a planar algebra isomorphism $\Phi\colon P_\bullet\to P_\bullet^{\vee}$ where $P_\bullet^\vee$ is the dual planar algebra obtained from $P_\bullet$ by reversing the shading. Note that this means for all $k$, there is a map $\Phi_{k,\pm}\colon P_{k,\pm}\to P_{k,\pm}^\vee=P_{k,\mp}$, and these maps commute with the action of the planar operad.

In fact, these three subfactor planar algebras are more than self-dual; they are \emph{symmetrically self-dual}, i.e., for every $n$, $\Phi_{n,\mp}\circ \Phi_{n,\pm}=\id_{n,\pm}$. Hence by \cite{4442equi}, we can lift the shading to get \emph{fantastic planar algebras}, i.e., unshaded, spherical, evaluable $C^*$-planar algebras. 

Given a fantastic planar algebra $P_\bullet$, we have an associated rigid $C^*$-tensor category $\cC_{P_\bullet}$ whose objects are the projections of $P_\bullet$ and a morphism in $\Hom{\cC}{p}{q}$ is an element $x\in P_\bullet$ such that $x=pxq$ (see \cite{MR2559686} for more details). Note further that $\cC_{P_\bullet}$ is generated by a single self-dual object $X$ (the strand), and $\cC_{P_\bullet}$ is $\Integer/2$-graded. The fusion graph with respect to $X$ is exactly the principal graph of $P_\bullet$. If the fusion graph is finite, then $\cC_{P_\bullet}$ is a unitary fusion category.

Hence the subfactor planar algebras $P_\bullet^{4442}, P_\bullet^{3333}, P_\bullet^{2221}$ give rise to $\Integer/2$-graded unitary fusion categories generated by a single self-dual object with fusion graphs $4442,3333,2221$ respectively. Note that a fusion category with fusion graph 2221 has previously been constructed by Ostrik in the appendix to \cite{1004.0665}.

\begin{thm}\label{thm:4442SelfDual}
The map $P_{5,+}^{4442}\leftrightarrow P_{5,-}^{4442}$ exchanging $A\leftrightarrow\check{A}$ and $B\leftrightarrow\check{B}$ gives a symmetric self-duality of planar algebras.
\end{thm}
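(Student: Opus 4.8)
The strategy is to define $\Phi$ on the generators and argue that it extends uniquely to a planar algebra isomorphism, because $P_\bullet^{4442}$ is presented by $A,B$ together with a complete set of relations. Since $A,B$ live at depth $n=5$ and the lower box spaces are Temperley--Lieb (the planar algebra is $4$-supertransitive), the only freedom in a planar algebra map out of $P_\bullet^{4442}$ is the image of $A$ and $B$; on the Temperley--Lieb part the map is forced to be the canonical shading-reversing identification. So I would set $\Phi_{5,+}(A)=\check A$, $\Phi_{5,+}(B)=\check B$ and $\Phi_{5,-}(\check A)=A$, $\Phi_{5,-}(\check B)=B$, and the whole problem reduces to checking that $\check A,\check B$, viewed as generators of the shading-reversed planar algebra $(P_\bullet^{4442})^\vee$, satisfy exactly the relations that $A,B$ satisfy in $P_\bullet^{4442}$.

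Next I would enumerate those relations. By Lemma \ref{lem:check-structure-coefficients}, the elements $A,B,f^{(n)}$ close into an algebra with structure constants $\alpha_{S,T}^R=\Tr(STR)/\Tr(R^2)$, and by the lemmas of Subsection \ref{sec:Relations4442} the generators obey one-strand box jellyfish relations with coefficient matrix $J^{L}K$. As explained in the discussion of the jellyfish algorithm, these two pieces of data---the cubic moments and a complete set of one-strand jellyfish relations---suffice to evaluate every closed diagram, and hence determine the planar algebra up to isomorphism among evaluable planar algebras generated by depth-$n$ elements. Consequently it is enough to match the corresponding dual data: the cubic moments must agree under checking, $\Tr(RST)=\Tr(\check R\check S\check T)$ for all $R,S,T\in\{A,B\}$, and the dual jellyfish matrix $\check J^{L}\check K$ must equal $J^{L}K$. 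The uncappability and rotational-eigenvector hypotheses require no separate check, since $\check S$ is by construction an uncappable rotational eigenvector whenever $S$ is.

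The key point is that this matching is precisely what the 4442 lemmas record. The displayed matrices satisfy $K=\check K$ and $J=\check J$, whence $J^{L}K=\check J^{L}\check K$; and the equality of cubic moments $\Tr(RST)=\Tr(\check R\check S\check T)$---which by Remark \ref{rem:PassToDual} is exactly the input that makes the checked and unchecked computations coincide, and which forces $\alpha=\beta$ via Lemma \ref{lem:check-structure-coefficients}---is verified by the moment computation in the appendix. Granting these coincidences, $\Phi$ is a well-defined planar algebra homomorphism, and the reverse assignment $\check A\mapsto A$, $\check B\mapsto B$ is a two-sided inverse, so $\Phi$ is an isomorphism $P_\bullet^{4442}\to(P_\bullet^{4442})^\vee$. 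Symmetry is then immediate from $\check{\check S}=S$: applying $\Phi$ twice sends $A\mapsto\check A\mapsto A$ and $B\mapsto\check B\mapsto B$, so $\Phi_{5,-}\circ\Phi_{5,+}=\id$ on generators, hence on all of $P_\bullet^{4442}$.

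The main obstacle is not any of the finite numerical verifications, which the computer supplies, but the justification that the cubic moments together with a complete set of one-strand jellyfish relations genuinely pin down the planar algebra up to isomorphism; once that rigidity principle is in hand, the self-duality follows from simply observing that all the relevant data are invariant under $S\mapsto\check S$.
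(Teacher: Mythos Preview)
Your proposal is correct and follows essentially the same approach as the paper: observe from Appendix \ref{sec:moments4442} that the moments are invariant under $S\mapsto\check S$, conclude that the box jellyfish relations (via $K=\check K$, $J=\check J$) and the algebra structure coefficients are preserved, and then invoke the jellyfish algorithm to see that $\Phi$ preserves the evaluation of every closed diagram. The paper's proof is terser but logically identical; your explicit remark that the rigidity principle---that cubic moments plus a complete set of one-strand jellyfish relations determine the planar algebra---is the real content is exactly the point the paper encapsulates in the phrase ``by the jellyfish algorithm.''
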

\begin{proof}
By the symmetry of the moments in Appendix \ref{sec:moments4442}, the map clearly preserves the moments. Hence the box jellyfish relations of Subsection \ref{sec:Relations4442} are preserved under $\Phi$.
Moreover, the structure coefficients in the algebra $P_{5,\pm}$ are also preserved, so by the jellyfish algorithm the map preserves the evaluation of all closed diagrams so is an isomorphism.
\end{proof}

\begin{thm}\label{thm:3333SelfDual}
The map $\Phi_{3,+}\colon P_{4,+}^{3333}\to P_{4,-}^{3333}$ by 
$$
\begin{pmatrix}
A\\
B
\end{pmatrix}
\mapsto
M
\begin{pmatrix}
\check{A}\\
\check{B}
\end{pmatrix}
\text{ where }
M=
\begin{pmatrix}
\frac{\sqrt{\frac{5}{2}}}{2} & \frac{\sqrt{3+\sqrt{5}}}{4} \\
\frac{3 \sqrt{3-\sqrt{5}}}{4} & -\frac{\sqrt{\frac{5}{2}}}{2}
\end{pmatrix}
$$ 
gives a symmetric self-duality of planar algebras.
\end{thm}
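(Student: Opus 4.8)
The plan is to follow the template of the proof of Theorem~\ref{thm:4442SelfDual}, but now with a genuine change of basis rather than a bare swap. The reason $A\leftrightarrow\check A$, $B\leftrightarrow\check B$ sufficed for $4442$ is that there the skein data for the two shadings coincided ($K=\check K$ and $J=\check J$, hence $J^LK=\check J^L\check K$); for $3333$ we have $K=\check K$ but $J\neq\check J$, so the bare swap does not carry the $(4,+)$ box jellyfish relations to the $(4,-)$ ones, and the linear recombination $M$ is needed to fix this. Since $3333$ is $3$-supertransitive of excess $2$ (Lemma~\ref{lem:Supertransitive}), we have $P^{3333}_{4,+}=TL_{4,+}\oplus\operatorname{span}(A,B)$, and $\Phi$ acts as shading-reversal on the Temperley--Lieb part; thus $\Phi$ is determined by its values on $A,B$, and the content is to check that the prescribed images are admissible generators of $(P^{3333}_\bullet)^\vee$ carrying exactly the same evaluation data.

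First I would verify that $\Phi(A)=M_{11}\check A+M_{12}\check B$ and $\Phi(B)=M_{21}\check A+M_{22}\check B$ are again admissible generators. Uncappability is automatic, being a linear condition satisfied by $\check A,\check B$; self-adjointness holds because $M$ is real. Compatibility with the rotation is the one structural point: since $\Phi$ must intertwine $\rho$ and $M$ is \emph{not} diagonal, comparing $\rho(\Phi(A))$ with $\Phi(\rho(A))$ forces $\omega_A=\omega_B=\omega_{\check A}=\omega_{\check B}$, so the rotation acts as a single scalar on each two-dimensional span and commutes with any $M$; this common eigenvalue is exactly what the appendix records. The substantive step is then the numerical heart of the construction: $M$ is the real matrix for which the push-forward of the moment tensor of $(A,B)$ matches that of $(\check A,\check B)$, i.e.
$$\Tr\bigl(\Phi(S_1)\Phi(S_2)\Phi(S_3)\bigr)=\Tr(S_1S_2S_3)\qquad\text{for all }S_1,S_2,S_3\in\{A,B\},$$
together with the quadratic identity $\Tr(\Phi(S)\Phi(T))=\Tr(ST)$. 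Expanding by multilinearity, these become polynomial equations relating the checked moments $b^{ST}_R=\Tr(\check R\check S\check T)$ to the unchecked $a^{ST}_R=\Tr(RST)$ of Appendix~\ref{sec:moments}, and solving them yields the stated $M$.

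Once the moments are matched, the rest is formal. The structure coefficients $\alpha^R_{S,T}=\Tr(STR)/\Tr(R^2)$ of the multiplication algebra and the box jellyfish relations produced in Section~\ref{sec:ComputingJellyfishRelations} depend only on the moments, so $\Phi$ preserves both ingredients of the jellyfish algorithm and therefore carries the evaluation of every closed diagram in $P^{3333}_\bullet$ to that of its image in $(P^{3333}_\bullet)^\vee$, making $\Phi$ a planar algebra isomorphism. For symmetric self-duality I would compute $\Phi_{4,-}\circ\Phi_{4,+}$: since the dual map is given by the same matrix under $\check{\check S}=S$, the composite acts on $\operatorname{span}(A,B)$ as $M^2$ and as the identity on Temperley--Lieb. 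Because $\operatorname{tr}M=M_{11}+M_{22}=0$, the off-diagonal entries of $M^2$ vanish and $M^2=(M_{11}^2+M_{12}M_{21})\,\id$; the scalar is $\tfrac58+\tfrac38=1$, using $M_{11}^2=\tfrac58$ and $M_{12}M_{21}=\tfrac{3}{16}\sqrt{(3+\sqrt5)(3-\sqrt5)}=\tfrac38$. Hence $M^2=\id$ and $\Phi_{n,\mp}\circ\Phi_{n,\pm}=\id_{n,\pm}$.

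The main obstacle is the middle step: producing and verifying $M$. Unlike $4442$, no symmetry of the moments forces the answer, so $M$ must emerge from solving the coupled quadratic and cubic moment equations across the two shadings — precisely the content the computer supplies. Conceptually, the assertion is that the two generator bases, with their differing norms and differing jellyfish matrices $J\neq\check J$, are nonetheless related by a single real linear involution that is simultaneously compatible with the rotation, the trilinear moment form, and the jellyfish relations; verifying that one matrix discharges all of these constraints at once is the crux.
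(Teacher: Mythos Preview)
Your proposal is correct and follows essentially the same approach as the paper: verify that the map preserves the quadratic and cubic moments (hence the structure coefficients and box jellyfish relations), invoke the jellyfish algorithm as in Theorem~\ref{thm:4442SelfDual} to conclude the map preserves the evaluation of all closed diagrams, and then check $M^2=\id$ for symmetric self-duality. Your exposition is considerably more detailed than the paper's---in particular your explicit verification that $M^2=\id$ via $\operatorname{tr}M=0$ and $M_{11}^2+M_{12}M_{21}=\tfrac58+\tfrac38=1$, and your observation that the non-diagonal $M$ is compatible with rotation precisely because $\omega_A=\omega_B=1$---but the underlying argument is the same.
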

\begin{remark}
It might be possible to choose generators so that $(A,B) \mapsto (\check{A}, \check{B})$ is already a symmetric self-duality of planar algebras, but it seems that one would have to work in a larger number field for this to be possible.
\end{remark}

\begin{proof}
One can easily verify that this map preserves the moments given in Appendix \ref{sec:moments3333}, so the argument from the proof of Theorem \ref{thm:4442SelfDual} applies. (In fact, we do this verification in the {\tt Mathematica} notebook {\tt QuadraticTangles.nb}.)
Finally, note that $M^2=\id$, so $\Phi_{n,\mp}\circ \Phi_{n,\pm}=\id_{n,\pm}$.
\end{proof}

\begin{thm}\label{thm:2221SelfDual}
The map $P_{3,+}^{2221}\leftrightarrow P_{3,-}^{2221}$ swapping $A\leftrightarrow\check{A}$ and $B\leftrightarrow\check{B}$ gives a symmetric self-duality of planar algebras.
\end{thm}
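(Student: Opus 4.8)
The plan is to follow verbatim the strategy used in the proof of Theorem~\ref{thm:4442SelfDual}. The map $\Phi$ in question sends each generator to its Fourier dual ($A \leftrightarrow \check{A}$, $B \leftrightarrow \check{B}$) and is to be defined on all of $P_\bullet^{2221}$ by extending this assignment compatibly with the planar operad. To show $\Phi$ is a well-defined planar algebra isomorphism $P_\bullet^{2221} \to (P_\bullet^{2221})^\vee$, it suffices to check that the two pieces of data driving the jellyfish algorithm---the box jellyfish relations and the multiplicative structure coefficients---are invariant under the swap. Both are assembled from the moments $\Tr(ST)$ and $\Tr(RST)$ (together with their checked versions), so the entire argument reduces to a symmetry property of the moments.

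First I would verify that the swap preserves moments: concretely, that $\Tr(ST) = \Tr(\check{S}\check{T})$ and $\Tr(RST) = \Tr(\check{R}\check{S}\check{T})$ for all $R,S,T \in \fB = \{A,B\}$, after inserting the spherical-to-lopsided correction factors of Examples~\ref{ex:FourierTransform} and~\ref{ex:CorrectionFactors}. This is a finite check against the tabulated moments in the appendix, and it is exactly the content of the observation that the matrices computed in Subsection~\ref{sec:Relations2221} satisfy $K = \check{K}$ and $J = \check{J}$ (whence also $J^L K = \check{J}^L \check{K}$). In other words, the box jellyfish relations for $(A,B)$ and for $(\check{A},\check{B})$ carry identical coefficients, so $\Phi$ sends each relation of Subsection~\ref{sec:Relations2221} to its checked counterpart.

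Granting this moment symmetry, the remaining steps are formal. The structure coefficients $\alpha_{S,T}^R = \Tr(STR)/\Tr(R^2)$ of Lemma~\ref{lem:check-structure-coefficients} are ratios of preserved moments, so $\Phi$ restricts to an algebra isomorphism between the span of $\{A,B,f^{(n)}\}$ and its checked analogue. Since the box jellyfish relations and this algebra structure are precisely the two ingredients the jellyfish algorithm uses to reduce an arbitrary closed diagram to a scalar, $\Phi$ commutes with the evaluation of closed diagrams and therefore extends to a planar algebra isomorphism. Finally, because the assignment is a literal involution---$\check{\check{S}} = S$ by the Notation preceding the quadratic tangles discussion---we obtain $\Phi_{n,\mp} \circ \Phi_{n,\pm} = \id_{n,\pm}$, so the self-duality is symmetric.

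The hard part will be the first step: confirming that, in the particular gauge in which the generators $A,B$ are presented, the swap genuinely preserves every quadratic and cubic moment with no nontrivial change-of-basis matrix required. This is exactly where $2221$ is expected to behave more simply than $3333$ (Theorem~\ref{thm:3333SelfDual}), which needed an auxiliary matrix $M$; here one anticipates, and must verify, that the appendix moments are already symmetric under $S \mapsto \check{S}$ up to the shading-correction factors. Everything downstream is automatic once this symmetry is established.
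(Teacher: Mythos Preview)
Your proposal is correct and follows essentially the same approach as the paper, which simply says ``Similar to the proof of Theorem~\ref{thm:4442SelfDual}.'' The one minor simplification is that the moments tabulated in Appendix~\ref{sec:moments2221} are already the spherical moments and are literally equal on the checked and unchecked sides, so no additional correction factors from Examples~\ref{ex:FourierTransform} and~\ref{ex:CorrectionFactors} need to be inserted in the comparison.
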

\begin{proof}
Similar to the proof of Theorem \ref{thm:4442SelfDual}.
\end{proof}

Note, however, that the result is not true for 3311. This result has been known since \cite{MR1355948}, and we give another proof for completeness.

\begin{thm}\label{thm:3311NotSelfDual}
$P_\bullet^{3311}$ is not self-dual.
\end{thm}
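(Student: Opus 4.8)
The plan is to show that the moment-matching argument used in Theorems \ref{thm:4442SelfDual}--\ref{thm:2221SelfDual} cannot be carried out in \emph{any} gauge for 3311, so no self-duality can exist. First I would observe that a planar algebra isomorphism $\Phi\colon P_\bullet^{3311}\to (P_\bullet^{3311})^\vee$ must preserve depth, supertransitivity, and Temperley--Lieb, and hence restricts at the generating depth $n$ to a linear isomorphism $\Phi_{n,+}\colon\spann\{A,B\}\to\spann\{\check{A},\check{B}\}$ carrying uncappable low-weight rotational eigenvectors to the same. Writing this restriction as a matrix $M$ with $X_1=A,X_2=B$, the requirement that $\Phi$ preserve the evaluation of closed diagrams forces $M$ to intertwine the moment tensors: preservation of $\Tr(RS)=\delta_{R,S}$ makes $M$ orthogonal for the trace pairing, while preservation of the cubic moments gives
$$
\Tr(RST)=\sum_{i,j,k} M_{Ri}M_{Sj}M_{Tk}\,\Tr(\check{X}_i\check{X}_j\check{X}_k)\qquad(R,S,T\in\{A,B\}).
$$

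Next I would use the rotational eigenvalues to pin down the shape of $M$. Since $\rho$ commutes with $\cF$, one checks $\omega_{\check{S}}=\omega_S$, so $\check{A},\check{B}$ carry the same eigenvalues as $A,B$. If, as the 3311 data indicate, $\omega_A\neq\omega_B$, then $\Phi$ must respect the rotational eigenspaces, forcing $M$ to be diagonal, say $\Phi(A)=c_A\check{A}$ and $\Phi(B)=c_B\check{B}$; orthogonality then gives $c_A^2=c_B^2=1$. Substituting into the cubic relations and using $c_A^3=c_A$ collapses the whole system to the four scalar equations
$$
\Tr(A^3)=c_A\Tr(\check{A}^3),\quad \Tr(A^2B)=c_B\Tr(\check{A}^2\check{B}),\quad \Tr(AB^2)=c_A\Tr(\check{A}\check{B}^2),\quad \Tr(B^3)=c_B\Tr(\check{B}^3),
$$
an overdetermined system in the two signs $c_A,c_B\in\{\pm1\}$. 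Reading the relevant cubic moments off Appendix \ref{sec:moments} and verifying that no choice of $(c_A,c_B)$ satisfies all four simultaneously finishes the argument. This is exactly the asymmetry that is absent for 4442 and 2221 (where $c_A=c_B=1$ works) and is circumvented for 3333 by the coincidence $\omega_A=\omega_B$, which frees $M$ to be the genuinely nondiagonal matrix of Theorem \ref{thm:3333SelfDual}.

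I expect the genuine work to be twofold. First, the reduction must be justified carefully: one needs that $\check{A},\check{B}$ are themselves orthonormal (so that the trace pairing really is the identity and $M$ is honestly orthogonal), and one must keep track of the fact that the dual generators need not be self-adjoint, which affects the passage between $\Tr$ and the sesquilinear form. Second, should it turn out that $\omega_A=\omega_B$ for 3311 (so that $M$ is not forced to be diagonal), the sign argument is unavailable and one must instead exhibit a bona fide orthogonal invariant of the cubic tensor. A convenient choice is $\sum_R v_R^2$ with $v_R=\sum_S\Tr(RSS)$: contracting two indices with $M^{\mathsf T}M=I$ shows $v=Mw$ where $w_R=\sum_S\Tr(\check{R}\check{S}\check{S})$, so $\sum_R v_R^2=\sum_R w_R^2$ is forced by any self-duality. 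One then computes both sides from the appendix moments and observes that they differ. Either way the obstacle is the same in spirit: producing from the tabulated moments a single numerical identity that a self-duality would demand but which the 3311 data violate; verifying this inequality is routine but is where the actual content of the theorem lives.
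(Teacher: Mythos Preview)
Your approach is essentially the same as the paper's: use that a planar algebra isomorphism must preserve rotational eigenspaces, so (since $\omega_A=-1\neq 1=\omega_B$ for 3311) $\Phi(A)=\mu\check{A}$ for some scalar $\mu$, and then compare cubic moments. The paper is simply more economical at the last step: it observes directly that $\Tr(A^3)=\tfrac{1}{27}(-6-4\sqrt{3})\neq 0=\Tr(\check{A}^3)$, which contradicts $\Tr(A^3)=\mu^3\Tr(\check{A}^3)$ for \emph{any} $\mu\in\Complex^\times$, so your orthogonality/sign analysis and the contingency plan for the $\omega_A=\omega_B$ case are unnecessary here.
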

\begin{proof}
Suppose there were a self-duality $\Phi$. Since $\Phi$ is a map of planar algebras, $\Phi$ must preserve the Temperley-Lieb planar subalgebras $TL_{k,\pm}$ and also the low weight spaces for the rotation in $P_{4,\pm}^{3311}$. Since the rotational eigenvalues are $\omega_A=-1$ and $\omega_B=1$, we know that $\Phi(A)=\mu \check{A}$ for some $\mu\in\Complex^\times$. However, this clearly violates
$$
\Tr(A^3) = \frac{1}{27} \left(-6-4 \sqrt{3}\right)\text{ and }\Tr(\check{A}^3) = 0
$$
(see Appendix \ref{sec:moments3311}).
\end{proof}

\subsection{Identifying principal graphs}

\begin{thm}\label{thm:4442graph}
The principal graphs of $P_\bullet^{4442}$ are 
$$
\left(
\bigraph{bwd1v1v1v1v1p1p1v1x0x0p0x1x0p0x0x1v1x0x0p0x1x0v1x0p0x1duals1v1v1v2x1x3v2x1},
\bigraph{bwd1v1v1v1v1p1p1v1x0x0p0x1x0p0x0x1v1x0x0p0x1x0v1x0p0x1duals1v1v1v2x1x3v2x1}
\right).
$$
\end{thm}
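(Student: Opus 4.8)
The plan is to assemble the structural constraints we have already established and then reduce the identification to a short computation that the small index makes routine. First, by Lemma \ref{lem:Supertransitive} the one-strand jellyfish relations of Subsection \ref{sec:Relations4442}, together with the algebra structure verified in Lemma \ref{lem:check-structure-coefficients}, show that $P_\bullet^{4442}$ is $4$-supertransitive; since $A$ and $B$ are linearly independent uncappable low-weight vectors, $P_{5,+}\ominus TL_{5,+}$ is exactly two-dimensional, so the supertransitivity is exactly $4$ and the excess is exactly $2$. By \cite{1208.1564} a complete set of one-strand jellyfish relations forces both principal graphs to be spoke graphs, and by Theorem \ref{thm:4442SelfDual} the planar algebra is self-dual, so its two principal graphs coincide. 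It therefore suffices to identify a single spoke graph $\Gamma'$.

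Next I would fix the local structure at the branch point. Supertransitivity $4$ means $\Gamma'$ consists of a straight stem of length $4$ running from the base point to a single vertex $v$ at depth $4$, and $v$ is the branch point. Because the excess equals $2$, the sum of the squares of the multiplicities of the edges leaving $v$ toward depth $5$ is $3$; since $3=1+1+1$ is its only expression as a sum of positive squares, there are exactly three such edges, each of multiplicity one. Hence $v$ is simply laced of valence $4$, and $\Gamma'$ is a spoke graph with four arms: the stem of length $4$ and three further arms, whose lengths $\ell_1,\ell_2,\ell_3$ are the only remaining unknowns.

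The remaining step is to compute these lengths. The modulus of $P_\bullet^{4442}$ is $\delta=\sqrt{3+\sqrt5}$, so the graph norm of $\Gamma'$ is $\delta$ and its Frobenius--Perron dimension vector is a positive $\delta$-eigenvector of the adjacency matrix. Along a terminating arm of length $\ell$ the dimensions satisfy $d_{k+1}=\delta d_k-d_{k-1}$ with endpoint relation $\delta d_\ell=d_{\ell-1}$, giving $d_{\ell-j}=[j+1]\,d_\ell$, so that arm contributes $[\ell]/[\ell+1]$ to the eigenvalue balance at $v$, which reads $\delta=\sum_i [\ell_i]/[\ell_i+1]$ summed over all four arms. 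With $\delta=\sqrt{3+\sqrt5}$ one finds $[5]=3[3]$, whence $3\cdot\frac{[4]}{[5]}+\frac{[2]}{[3]}=\delta$, so arms $4,4,4,2$ solve the equation. The main obstacle is \emph{uniqueness together with ruling out infinite arms}. Each summand $[\ell]/[\ell+1]$ is increasing in $\ell$ and lies strictly between $1/\delta$ and $q^{-1}$ (with $q+q^{-1}=\delta$, $q>1$), so requiring four of them to sum to $\delta$ forces every $\ell_i\geq 2$ and bounds them above, leaving only finitely many cases to test against the explicit quantum integers in $\Integer[\sqrt5]$; crucially, the stem is pinned at length $4$ by supertransitivity, which is what excludes the competitor $3333$. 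An infinite arm would contribute exactly $q^{-1}$ to the balance, which does not lie in the field generated by the finite-arm terms, so the exact equation has no solution with an infinite arm. Carrying out this finite check (and invoking the depth-by-depth decomposition of minimal projections, available since $P_\bullet^{4442}$ is evaluable) yields $\{\ell_1,\ell_2,\ell_3\}=\{4,4,2\}$, identifying $\Gamma'$ as $4442$; self-duality then gives the stated pair of principal graphs.
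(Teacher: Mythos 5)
Your route is genuinely different from the paper's: you combine the spoke-graph theorem of \cite{1208.1564}, the excess count at the branch point, and the Perron--Frobenius balance $\delta=\sum_i [\ell_i]/[\ell_i+1]$ at the central vertex, whereas the paper solves $(aA+bB+cf^{(5)})^2=aA+bB+cf^{(5)}$ using the computed moments to exhibit the three minimal projections at depth $5$ with their traces, rules out merging, splitting and double edges by graph-norm estimates, and then reads off from the dimension recursion exactly where each arm terminates. Your finite-arm analysis is correct (the identity $[5]=3[3]$ does give the solution $\{2,4,4\}$, and monotonicity of $[\ell]/[\ell+1]$ reduces uniqueness to a finite exact check), but the step you yourself flag as the main obstacle is wrong. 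An infinite arm does \emph{not} contribute exactly $q^{-1}$: a positive $\delta$-eigenvector along an infinite arm has the form $d_k=\alpha q^{k}+\beta q^{-k}$, and its contribution $d_1/d_0$ equals $q^{-1}$ only when $\alpha=0$, i.e.\ for the decaying solution, which is impossible for a principal graph in any case because quantum dimensions are bounded below by $1$. For $\alpha>0$ the contribution can be \emph{any} value greater than $q^{-1}$. Concretely, the spoke graph with stem $4$ and arms $2,2,\infty$ carries a positive $\delta$-eigenvector with all entries at least $1$, the infinite arm contributing $\delta-[4]/[5]-2[2]/[3]\approx 0.625>q^{-1}\approx 0.588$; your field-theoretic argument cannot exclude it. This is the same phenomenon as $A_\infty$ being the principal graph of Temperley--Lieb at index $>4$ even though $\|A_\infty\|=2<\delta$: the eigenvalue equation alone never rules out infinite graphs. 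To finish you need genuine subfactor input --- either finite depth as part of the conclusion of \cite{1208.1564} (in which case your infinite-arm discussion should be replaced by that citation), or, as in the paper, the explicitly computed traces of the depth-$5$ minimal projections, which determine every subsequent dimension and force each arm to end where $4442$ ends.

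There is a second omission: the theorem asserts more than the underlying graph, since the bigraph data includes the duality pairing on the vertices, and ``self-duality then gives the stated pair'' does not determine it. The paper's proof devotes its final paragraph to exactly this: it enumerates (via {\tt FindGraphPartners}) the two dual-data assignments for which both graphs are $4442$, and eliminates one because the dimension-$1$ bimodules form a group, forcing the two vertices at greatest depth to be dual to each other. Your proposal says nothing about this. A smaller, fixable point: linear independence of $A$ and $B$ only gives excess $\geq 2$; to conclude the excess is exactly $2$ (hence exactly three simple edges past the branch point) you need the train-spanning argument --- the same reduction used in Lemma \ref{lem:Supertransitive}, applied one depth higher --- showing $P_{5,+}=TL_{5,+}\oplus\spann\{A,B\}$.
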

\begin{proof}
First, note that the modulus of the given subfactor planar algebra is $\sqrt{3+\sqrt{5}} \simeq 2.28825$. Let $\Gamma$ denote the principal graph. Using the moments calculated from our generators in Appendix \ref{sec:moments4442}, we find all solutions to the equation
$$
(aA+bB+cf^{(5)})^2=aA+bB+cf^{(5)}
$$
to get that the minimal projections one past the branch point are given by the equations:
$$
(a,b,c)=
\begin{cases}
\begin{gathered}
\lambda_{\begin{subarray}{l}332150625,\\-7418030625,\\60616258875,\\-208296167625,\\266656517775,\\-15736631400,\\328666680,\\13223880,\\190096\\\mbox{}\end{subarray}}^{(0.042252+0.036141 i)},
\lambda_{\begin{subarray}{l}332150625,\\-7418030625,\\60616258875,\\-208296167625,\\266656517775,\\-15736631400,\\328666680,\\13223880,\\190096\\\mbox{}\end{subarray}}^{(-0.011821-0.010111 i)},
\frac{1}{5} \left(-5+3 \sqrt{5}\right) 
\end{gathered} \\
\begin{gathered}
\lambda_{\begin{subarray}{l}332150625,\\-7418030625,\\60616258875,\\-208296167625,\\266656517775,\\-15736631400,\\328666680,\\13223880,\\190096\\\mbox{}\end{subarray}}^{(-0.011821-0.010111 i)},
\lambda_{\begin{subarray}{l}332150625,\\-7418030625,\\60616258875,\\-208296167625,\\266656517775,\\-15736631400,\\328666680,\\13223880,\\190096\\\mbox{}\end{subarray}}^{(0.042252+0.036141 i)},
\frac{1}{5} \left(-5+3 \sqrt{5}\right) 
\end{gathered} \\
\begin{gathered}
\lambda_{2025,45225,274635,16620,436}^{(-0.030431-0.026030 i)},
\lambda_{2025,45225,274635,16620,436}^{(-0.030431-0.026030 i)},
\frac{1}{5} \left(15-6 \sqrt{5}\right).
\end{gathered}
\end{cases}
$$
Note that the dimensions of these projections agree with the dimensions of the vertices on the $4442$ graph, so all the arms on the principal graph $\Gamma$ must continue. Two branches cannot merge since 
$$
\left\|\bigraph{gbg1v1v1v1v1p1p1v1x1x0}\right\| \approx 2.33743,
$$
no branch can split since 
$$
\left\|\bigraph{gbg1v1v1v1v1p1p1v1x0x0p1x0x0p0x1x0p0x0x1} \right\| \approx 2.31725,
$$
and no branch can have a double edge since
$$
\left\|\bigraph{gbg1v1v1v1v1p1p1v0x2x0} \right\| \approx 2.4761,
$$
and all of these numbers are already too large.
Hence all branches continue simply. By counting Frobenius-Perron dimensions, one arm must stop, but the other two must continue. Again, the branches cannot merge or split as 
\begin{align*}
\left\|\bigraph{gbg1v1v1v1v1p1p1v1x0x0p0x1x0p0x0x1v1x1x0}\right\| &\approx 2.32033,\\
\left\|\bigraph{gbg1v1v1v1v1p1p1v1x0x0p0x1x0p0x0x1v1x0x0p1x0x0} \right\| &\approx 2.29079, \text{ and }\\
\left\|\bigraph{gbg1v1v1v1v1p1p1v1x0x0p0x1x0p0x0x1v0x2x0}\right\| &\approx 2.41976.
\end{align*}
Again, both branches must continue simply. Once more by counting dimensions, the two remaining arms must continue, and again they cannot merge or split as
\begin{align*}
\left\|\bigraph{gbg1v1v1v1v1p1p1v1x0x0p0x1x0p0x0x1v0x1x0p0x0x1v1x1}\right\| &\approx 2.30231,\\
\left\|\bigraph{gbg1v1v1v1v1p1p1v1x0x0p0x1x0p0x0x1v0x1x0p0x0x1v1x0p1x0}\right\| &\approx 2.29193,\text{ and }\\
\left\|\bigraph{gbg1v1v1v1v1p1p1v1x0x0p0x1x0p0x0x1v0x1x0p0x0x1v2x0}\right\| &\approx 2.37309.
\end{align*}
We conclude by counting dimensions again that $\Gamma$ is 4442.

The dual principal graph is also $4442$ by Theorem \ref{thm:4442SelfDual}.

To determine the dual data, we run the program {\tt FindGraphPartners} on the graph
$$
\bigraph{gbg1v1v1v1v1p1p1v1x0x0p0x1x0p0x0x1v1x0x0p0x1x0v1x0p0x1}
$$
which determines all possible pairs of principal graphs and dual data for which one of the graphs is 4442. The only possibilities for which the principal and dual principal graph are both 4442 are
\begin{align*}
&\left(\bigraph{bwd1v1v1v1v1p1p1v1x0x0p0x1x0p0x0x1v1x0x0p0x1x0v1x0p0x1duals1v1v1v2x1x3v2x1},\bigraph{bwd1v1v1v1v1p1p1v1x0x0p0x1x0p0x0x1v1x0x0p0x1x0v1x0p0x1duals1v1v1v2x1x3v2x1}\right)
\text{ and }\\
&\left(\bigraph{bwd1v1v1v1v1p1p1v1x0x0p0x1x0p0x0x1v1x0x0p0x1x0v1x0p0x1duals1v1v1v1x2x3v1x2},\bigraph{bwd1v1v1v1v1p1p1v1x0x0p0x1x0p0x0x1v1x0x0p0x1x0v1x0p0x1duals1v1v1v1x2x3v1x2}\right).
\end{align*}
We rule out the second pair by noting that the dimension 1 bimodules form a group, and thus the vertices at the far right must be dual to each other.
\end{proof}

If we already knew $P_\bullet^{4442}$ was finite depth, we could give an alternative argument identifying the principal graph as 4442 as follows. We omit details, as this is redundant with the argument in Theorem \ref{thm:4442graph}.
\begin{thm}
The only finite depth subfactor principal graphs with index $3+\sqrt{5}$ starting like 4111 are 4442.
\end{thm}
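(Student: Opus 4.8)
\emph{Proof proposal.} The plan is to re-run the Frobenius--Perron dimension count and graph-norm estimates from the proof of Theorem~\ref{thm:4442graph}, but starting only from the combinatorial hypotheses rather than from explicit generators. Write $\delta=\sqrt{3+\sqrt5}\approx 2.288$ for the modulus, so that the principal graph $\Gamma$ has norm $\|\Gamma\|=\delta$, and let $\qi{k}$ denote the quantum integers for the parameter $q>1$ with $q+q^{-1}=\delta$. The hypothesis ``starting like $4111$'' means that $\Gamma$ is $4$-supertransitive with a single branch point $v$ of valence four at depth $4$. Normalizing the Frobenius--Perron weight of $\ast$ to $1$, the weights along the initial stem are $\qi{1},\dots,\qi{5}$, so $v$ has weight $\qi{5}$, and since $\delta\,\qi{5}=\qi{6}+\qi{4}$ and the neighbour $v_3$ carries weight $\qi{4}$, the three outgoing neighbours of $v$ have weights summing to $\qi{6}$.

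First I would show that beyond $v$ the graph is a disjoint union of three simple arms attached at $v$. This is where the norm estimates of Theorem~\ref{thm:4442graph} do the work: a second branch point, a merging of two arms, or a multiple edge each produce a subgraph whose norm exceeds $\delta$ (the relevant test graphs have norms $\approx 2.317$, $2.337$, and $2.476$), and since the norm of any subgraph bounds $\|\Gamma\|=\delta$ from below, all of these are impossible. Finiteness of depth then guarantees that each of the three arms is a finite simple path terminating in a leaf.

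Next I would pin down the three arm lengths $m_1,m_2,m_3$. On a simple arm of length $m$ ending in a leaf of weight $\ell$, the eigenvalue equations force the weights to be $\ell\,\qi{1},\ell\,\qi{2},\dots$ reading back from the leaf, so that $\qi{5}=\ell\,\qi{m+1}$ and the first vertex of the arm has weight $\qi{m}\,\qi{5}/\qi{m+1}$. Summing the three first-vertex weights and equating to $\qi{6}$ gives the single scalar equation
$$
\frac{\qi{m_1}}{\qi{m_1+1}}+\frac{\qi{m_2}}{\qi{m_2+1}}+\frac{\qi{m_3}}{\qi{m_3+1}}=\frac{\qi{6}}{\qi{5}}.
$$
Using $\delta^2=3+\sqrt5$ one computes $\qi{5}=3(2+\sqrt5)$ and $\qi{6}/\qi{5}=\delta\sqrt5/3$, and checks directly that $2\,\qi{4}/\qi{5}+\qi{2}/\qi{3}=\qi{6}/\qi{5}$, so $\{m_1,m_2,m_3\}=\{4,4,2\}$ is a solution. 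Since $m\mapsto \qi{m}/\qi{m+1}$ is strictly increasing and bounded by $q^{-1}$, a short case analysis gives uniqueness: no arm can have length $1$ (the remaining two terms are at most $2q^{-1}$, which falls short of the deficit), at most one arm can have length $2$, and all three arms of length $\ge 3$ is impossible because $3\,\qi{3}/\qi{4}>\qi{6}/\qi{5}$ (equivalently $\sqrt5>2$); the only remaining possibility is exactly one arm of length $2$ together with two arms of length $4$. Hence the outgoing arms have lengths $4,4,2$ and $\Gamma$ is the $4442$ graph.

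The main obstacle is the first step: verifying with a finite list of norm computations that no branching, merging, or edge-multiplicity can occur anywhere beyond $v$, uniformly in the depth at which it might appear. This is exactly the content that makes the argument redundant with Theorem~\ref{thm:4442graph}, where the analogous estimates are carried out stage by stage; the number-theoretic input is simply that $\delta\approx 2.288$ is small enough that every such local configuration overshoots $\delta$, whereas simple extension (with norm tending to $2$) does not.
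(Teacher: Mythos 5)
Your step 2 is fine: granting that $\Gamma$ is a spoke graph with three finite arms beyond the central vertex, the identity $2\qi{4}/\qi{5}+\qi{2}/\qi{3}=\qi{6}/\qi{5}$ and the monotonicity of $m\mapsto \qi{m}/\qi{m+1}$ do force the arm lengths $\{4,4,2\}$, up to one case you omit: with arms of lengths $2$ and $3$ present, the third arm would need $\qi{m}/\qi{m+1}=\qi{6}/\qi{5}-\qi{2}/\qi{3}-\qi{3}/\qi{4}\approx 0.593>q^{-1}\approx 0.588$, which is the same style of estimate you already use. The genuine gap is step 1, and it is not merely an unverified uniformity---the uniform claim is false. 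The three norms you quote ($\approx 2.317$, $2.337$, $2.476$) are norms of test graphs whose defect (fork, merge, double edge) sits \emph{adjacent to} the central vertex; a defect at distance $k>1$ contains none of these graphs as a subgraph, so those numbers place no constraint on it, and the relevant test graph must be recomputed for each $k$. At the point where your step 1 runs, the other two legs are only known to have length $\geq 1$, so for a fork at distance $k$ the guaranteed subgraph is the spoke graph with arms $4,1,1,k$ plus a fork at the end of the long arm. As $k\to\infty$ the norms of these graphs converge to the larger of the norms of the two one-ended limits: the Dynkin diagram $D_\infty$ (norm $2$) and the spoke graph with arms $4,1,1,\infty$, whose norm is the largest root of $\lambda=\qiq{4}{\lambda}/\qiq{5}{\lambda}+2/\lambda+\mu(\lambda)^{-1}$ (with $\mu+\mu^{-1}=\lambda$), namely $\approx 2.20<\delta$. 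So deep defects are invisible to norm estimates. (Your closing parenthetical reflects the same misconception: the relevant limits are one-ended spoke norms like this one, or $4/\sqrt{3}\approx 2.309$ for the fully infinite star, not $2$.) Theorem \ref{thm:4442graph} avoids this problem only because its norm checks are interleaved with dimension counts obtained from the actual minimal projections of $P_\bullet^{4442}$, which force the other legs to grow before each new norm test; even then the fork norms decay toward $\delta$ ($2.31725$, $2.29079$, $2.29193$ against $\delta\approx2.28825$), and no such projections exist in the purely combinatorial statement at hand.

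Moreover, the configurations your method misses are exactly the ones the paper's proof works hardest to exclude. Nothing in your estimates forbids two legs ending at length $2$; their leaves have dimension $\qi{5}/\qi{3}=3$, a perfectly good algebraic integer. Your own dimension equation then shows no simple third arm can close the graph up, since it would need $\qi{m}/\qi{m+1}=\qi{6}/\qi{5}-2\qi{2}/\qi{3}\approx0.625>q^{-1}$; hence the third leg must fork somewhere, and since the spoke graph with arms $4,2,2,\infty$ has norm $\approx2.27<\delta$, that fork can sit at any large distance without violating a norm bound. This family (``the graph contains 4422'') is precisely what the paper eliminates by showing there is \emph{no bi-unitary connection} on such graphs, and the short legs (length $1$, and the $(2,3)$ configuration) are eliminated by algebraic integrality of dimensions (the polynomial $81-126x^2+4x^4$), not by eigenvector identities. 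Both ingredients---integrality and connections---are missing from your proposal, and the connection computation in particular is doing work that, by the estimates above, graph norms and Frobenius--Perron counting cannot do.
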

\begin{proof}
If any of the edges above the quadruple point end immediately, that vertex has dimension which is not an algebraic integer (a root of $81 - 126 x^2 + 4 x^4$).
Otherwise, graph enumeration in the style of \cite{index5-part1} shows that two of the legs end, with lengths 2, 2 or with lengths 2, 3. In second case, there's a dimension which isn't an algebraic integer. In the other case, the graph contains 4422. (There are two possibilities for the dual data.) We can look for connections on this graph (without assuming that it ends), and show that there are no bi-unitary connections.
\end{proof}

\begin{thm}\label{thm:3333graph}
The principal graphs of $P_\bullet^{3333}$ are 
$$
\left(
\bigraph{bwd1v1v1v1p1p1v1x0x0p0x1x0p0x0x1v1x0x0p0x1x0p0x0x1duals1v1v1x2x3v1x2x3},
\bigraph{bwd1v1v1v1p1p1v1x0x0p0x1x0p0x0x1v1x0x0p0x1x0p0x0x1duals1v1v1x2x3v1x2x3}
\right).
$$
\end{thm}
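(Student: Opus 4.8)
The plan is to follow the proof of Theorem \ref{thm:4442graph} essentially verbatim, exploiting that $P_\bullet^{3333}$ is $3$-supertransitive with two generators $A,B\in P_{4,+}$, so its principal graph $\Gamma$ has a quadruple point at depth $3$. As before, the modulus is $\delta=\sqrt{3+\sqrt{5}}\approx 2.288$, and the first task is to locate the minimal projections immediately past the branch point. Concretely, I would use the moments recorded in Appendix \ref{sec:moments3333} to solve the idempotent equation
$$
(aA+bB+cf^{(4)})^2=aA+bB+cf^{(4)}
$$
in the $(4,+)$-box space. I expect exactly three solutions $(a,b,c)$, corresponding to the three forward arms emanating from the branch point, and I would then check that their traces, i.e.\ their Frobenius--Perron dimensions, match those of the three depth-$4$ vertices of the $3333$ graph, so that all three arms must continue.

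Second, I would propagate the graph outward depth by depth, exactly as in Theorem \ref{thm:4442graph}. At each stage, Frobenius--Perron dimension counting forces the surviving arms to continue, while the possibilities that two arms merge, that an arm splits, or that an arm carries a double edge are each excluded by exhibiting the corresponding local subgraph and computing that its graph norm already exceeds $\delta\approx 2.288$. Because all three forward arms of $3333$ have the same length, the dimension bookkeeping should be more symmetric than in the $4442$ case: rather than one arm terminating before the others, all three arms continue simply to depth $5$ and then terminate simultaneously at depth $6$, at which point a final dimension count identifies $\Gamma$ as the $3333$ graph.

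Third, for the dual side I would invoke Theorem \ref{thm:3333SelfDual}: since $P_\bullet^{3333}$ is symmetrically self-dual, the dual principal graph is again $3333$. To pin down the dual data I would run {\tt FindGraphPartners} on the underlying $3333$ graph and rule out any spurious pairings using that the dimension-$1$ bimodules form a group; here the high symmetry of $3333$ should make this considerably more constrained than the analogous argument for $4442$, likely leaving only the symmetric pairing.

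The main obstacle is the same as in the $4442$ analysis: verifying the graph-norm inequalities that forbid merging, splitting, and double edges. Each such inequality requires identifying the correct minimal obstructing subgraph and checking, with enough precision to be rigorous over the relevant number field, that its norm exceeds $\delta$; the delicate part is the combinatorial bookkeeping of exactly which local configurations can arise at each depth, so that the enumerated obstructions really exhaust all the ways $\Gamma$ could deviate from $3333$. Each individual norm computation is routine, but ensuring the case analysis is complete is where the care is needed.
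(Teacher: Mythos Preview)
Your outline for identifying the underlying graph is essentially the paper's argument: solve the idempotent equation at depth $4$ using the moments in Appendix \ref{sec:moments3333}, match the three traces $2+\sqrt{5}$ to the Frobenius--Perron dimensions, and then exclude merges, splits, and double edges at each subsequent depth by graph-norm bounds until the three arms terminate at depth $6$. Invoking Theorem \ref{thm:3333SelfDual} for the dual graph is also correct.

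There is, however, a genuine gap in your determination of the dual data. Running {\tt FindGraphPartners} on $3333$ leaves \emph{two} possibilities (all depth-$4$ vertices self-dual, or two of them swapped, with the corresponding swap at depth $6$), and your proposed ``dimension-$1$ bimodules form a group'' argument does not separate them. The three endpoints at depth $6$ indeed have dimension $1$, so together with the trivial object you get a group of order $4$; but both $(\Integer/2)^2$ (everything self-dual) and $\Integer/4$ (two endpoints mutually dual, one self-dual) are perfectly good groups of order $4$, and they correspond exactly to the two surviving dual-data options. So unlike the $4442$ case, the group constraint is vacuous here.

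The paper resolves this instead by looking at the rotational eigenvalues: since $\omega_A=\omega_B=1$ (Appendix \ref{sec:generators3333}), the two-click rotation $\rho$ acts as the identity on $P^{3333}_{4,+}\ominus TL_{4,+}$, which forces every minimal projection at depth $4$ to be self-dual. That immediately rules out the second pairing and pins down the dual data. You should replace the group-of-invertibles argument with this rotation argument.
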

\begin{proof}
The proof is similar to that of Theorem \ref{thm:4442graph}. Again, the modulus is $\sqrt{3+\sqrt{5}} \simeq 2.28825$, and we find that the minimal projections one past the branch are given by $aA +bB+cf^{(4)}$ where
$$
(a,b,c)=
\begin{cases}
\displaystyle\left(\frac{1}{4} \left(-1+\sqrt{5}\right),  -\frac{\sqrt{5}}{6},\frac{1}{3}\right) \\
\displaystyle\left(-\frac{1}{2} , \frac{1}{12} \left(-3+\sqrt{5}\right),\frac{1}{3}\right) \\
\displaystyle\left(\frac{1}{4} \left(3-\sqrt{5}\right), \frac{1}{12} \left(3+\sqrt{5}\right),\frac{1}{3}\right).
\end{cases}
$$
Since $\Tr(f^{(4)})=6+3 \sqrt{5}$, all the minimal projections have trace $2+\sqrt{5}$, which agree with the Frobenius-Perron dimensions of the vertices of
$$
\bigraph{gbg1v1v1v1p1p1v1x0x0p0x1x0p0x0x1v1x0x0p0x1x0p0x0x1}
$$
at depth $4$. Once again, counting Frobenius-Perron dimensions and noting that
\begin{align*}
\left\|\bigraph{gbg1v1v1v1p1p1v1x1x0}\right\| &\approx 2.33441,\\
\left\|\bigraph{gbg1v1v1v1p1p1v1x0x0p1x0x0p0x1x0p0x0x1} \right\| &\approx 2.31384,\\
\left\|\bigraph{gbg1v1v1v1p1p1v0x2x0} \right\| &\approx 2.47485,\\
\left\|\bigraph{gbg1v1v1v1p1p1v1x0x0p0x1x0p0x0x1v1x1x0}\right\| &\approx 2.31725,\\
\left\|\bigraph{gbg1v1v1v1p1p1v1x0x0p0x1x0p0x0x1v1x0x0p1x0x0p0x1x0}\right\| &\approx 2.29813,\text{ and }\\
\left\|\bigraph{gbg1v1v1v1p1p1v1x0x0p0x1x0p0x0x1v0x2x0}\right\| &\approx 2.41856
\end{align*}
yields the result.

The dual principal graph is also $3333$ by Theorem \ref{thm:3333SelfDual}.

To determine the dual data, we run {\tt FindGraphPartners} as in the proof of Theorem \ref{thm:4442graph} on the graph
$$
\bigraph{gbg1v1v1v1p1p1v1x0x0p0x1x0p0x0x1v1x0x0p0x1x0p0x0x1}.
$$
The only possibilities for which the principal and dual principal graph are both 3333 are
\begin{align*}
&\left(\bigraph{bwd1v1v1v1p1p1v1x0x0p0x1x0p0x0x1v1x0x0p0x1x0p0x0x1duals1v1v1x2x3v1x2x3},\bigraph{bwd1v1v1v1p1p1v1x0x0p0x1x0p0x0x1v1x0x0p0x1x0p0x0x1duals1v1v1x2x3v1x2x3}\right)
\text{ and }\\
&\left(\bigraph{bwd1v1v1v1p1p1v1x0x0p0x1x0p0x0x1v1x0x0p0x1x0p0x0x1duals1v1v2x1x3v2x1x3},\bigraph{bwd1v1v1v1p1p1v1x0x0p0x1x0p0x0x1v1x0x0p0x1x0p0x0x1duals1v1v2x1x3v2x1x3}\right).
\end{align*}
Since $\omega_A=\omega_B=1$ (see Appendix \ref{sec:generators3333}), the two click rotation $\rho$ must be the identity on $P^{3333}_{4,+}\ominus TL_{4,+}$, and thus all vertices at depth $4$ must be self-dual.
\end{proof}

\begin{thm}\label{thm:3311graph}
The principal graphs of $P_\bullet^{3311}$  are
$$
\left(
\bigraph{bwd1v1v1v1p1p1v1x0x0v1duals1v1v1x2x3v1},
\bigraph{bwd1v1v1v1p1p1v1x0x0v1duals1v1v1x2x3v1}
\right).
$$
\end{thm}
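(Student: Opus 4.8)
The plan is to follow the same template as the proofs of Theorems \ref{thm:4442graph} and \ref{thm:3333graph}, with one essential modification to handle the fact that $P_\bullet^{3311}$ is \emph{not} self-dual. First I would record that the modulus is again $\delta = \sqrt{3+\sqrt 5} \simeq 2.28825$. Using the moments of $A,B$ computed in Appendix \ref{sec:moments3311} together with the structure coefficients from Lemma \ref{lem:check-structure-coefficients}, I would solve the idempotent equation
$$
(aA + bB + cf^{(4)})^2 = aA + bB + cf^{(4)}
$$
for the minimal projections lying one past the branch point, and verify that their traces match the Frobenius--Perron dimensions of the four vertices adjacent to the branch point of the 3311 graph.

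Next, exactly as before, I would rule out the degenerate ways a branch could evolve --- two branches merging, a branch splitting, or a branch carrying a double edge --- by exhibiting that each corresponding local graph has norm strictly exceeding $\delta$. Combining these norm obstructions with a Frobenius--Perron dimension count at each depth forces two of the four branch-neighbours to be leaves (the ends of the two length-one spokes) and the other two to continue simply out to depth three (the length-three spokes), yielding that the principal graph $\Gamma$ is exactly 3311. The same computation run with the dual generators $\check A,\check B$ --- that is, in the shading-reversed planar algebra, using the checked moments of Appendix \ref{sec:moments3311} --- shows that the dual principal graph is again the 3311 graph as an abstract graph; unlike in the self-dual cases, this second computation cannot be avoided.

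The genuinely new difficulty --- and the step I expect to be the main obstacle --- is pinning down the \emph{dual data}, since here I cannot appeal to a self-duality as in Theorems \ref{thm:4442SelfDual} and \ref{thm:3333SelfDual}. Because the rotational eigenvalues are $\omega_A = -1$ and $\omega_B = 1$ (Appendix \ref{sec:generators3311}), the two-click rotation $\rho$ is \emph{not} the identity on $P^{3311}_{4,+}\ominus TL_{4,+}$, so I cannot conclude that the branch-point neighbours are all self-dual the way one does for 3333. Instead I would run {\tt FindGraphPartners} on the 3311 graph to enumerate every compatible pair of dual-data assignments, and then eliminate all but one using the asymmetry already exploited in Theorem \ref{thm:3311NotSelfDual}, namely $\Tr(A^3) = \tfrac{1}{27}(-6-4\sqrt3) \neq 0 = \Tr(\check A^3)$, supplemented by the fact that the dimension-one bimodules must form a group (as at the end of the proof of Theorem \ref{thm:4442graph}). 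The subtle part is checking that these constraints single out precisely the dual data displayed in the statement rather than merely an abstractly isomorphic alternative; I expect this bookkeeping, rather than any of the norm or projection computations, to be where the real care is required.
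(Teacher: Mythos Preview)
Your proposal contains two substantive errors. First, the modulus for the 3311 subfactor is $\delta=\sqrt{3+\sqrt{3}}\simeq 2.17533$, not $\sqrt{3+\sqrt{5}}$; the index $3+\sqrt{5}$ belongs to 4442 and 3333. This matters for every one of the norm comparisons you plan to make. Relatedly, there are \emph{three} vertices one past the branch point (the 3311 spoke graph has three outgoing arms at the branch, not four), so two of the three are leaves and one continues.

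Second --- and this is the more serious conceptual gap --- you have the rotation argument for the dual data exactly backwards. Duality on the minimal projections at depth $n=4$ is implemented by the half-rotation, i.e.\ by $\rho^{2}$, not by $\rho$ itself. Since $\omega_A=-1$ and $\omega_B=1$ we have $\omega_A^{2}=\omega_B^{2}=1$, so $\rho^{2}=\id$ on $P^{3311}_{4,+}\ominus TL_{4,+}$ (and $\rho^{2}$ fixes $f^{(4)}$ as well). Hence every minimal projection $aA+bB+cf^{(4)}$ at depth $4$ is fixed by $\rho^{2}$, i.e.\ self-dual, and the singly-valent depth-$4$ vertices are forced to be self-dual. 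This is precisely the argument used for 3333 (there $\rho$ itself was already the identity, but all that was actually needed was $\rho^{2}$), and it applies here without change. Your proposed detour through {\tt FindGraphPartners} and the asymmetry $\Tr(A^3)\neq\Tr(\check A^3)$ is unnecessary; the paper's proof handles the dual data in a single line via $\rho^{2}=\id$.
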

\begin{proof}
Similar to the proofs of Theorems \ref{thm:4442graph} and \ref{thm:3333graph}. The modulus is $\sqrt{3+\sqrt{3}}\simeq 2.17533$, and the minimal projections are given by $aA +bB+cf^{(4)}$ where
$$
(a,b,c)=
\begin{cases}
\left(-\frac{\sqrt{3}}{2}, \frac{1}{11} \left(4-3 \sqrt{3}\right),  \frac{1}{11} \left(1+2 \sqrt{3}\right) \right)\\
\left(\frac{1}{4} \left(3-\sqrt{3}\right), \frac{1}{22} \left(7+3 \sqrt{3}\right), \frac{1}{11} \left(5-\sqrt{3}\right)\right) \\
\left(\frac{1}{4} \left(-3+3 \sqrt{3}\right), \frac{1}{22} \left(-15+3 \sqrt{3}\right), \frac{1}{11} \left(5-\sqrt{3}\right)\right).
\end{cases}
$$ 
Since $\Tr(f^{(4)})=4+3\sqrt{3}$, the traces of the minimal projections are $2+\sqrt{3},1+\sqrt{3},1+\sqrt{3}$ respectively, which agree with the Frobenius-Perron dimensions of the vertices of
$$
\bigraph{gbg1v1v1v1p1p1v1x0x0v1}
$$
at depth $4$ reading from bottom to top. Finally,
\begin{align*}
\left\|\bigraph{gbg1v1v1v1p1p1v1x1x0}\right\| &\approx 2.33441,\\
\left\| \bigraph{gbg1v1v1v1p1p1v1x0x0p1x0x0}\right\| &\approx 2.23607 \text{ and}\\
\left\| \bigraph{gbg1v1v1v1p1p1v0x2x0}\right\| &\approx 2.47485.
\end{align*}
Hence the principal graph is 3311.

For the dual graph, note that the minimal projections in $P_{4,-}$ are given by $\check{a}\check{A} +\check{b}\check{B}+\check{c}\check{f}^{(4)}$ where
$$
(\check{a},\check{b},\check{c})=
\begin{cases}
\left(0,\frac{1}{11} \sqrt{2 \left(27-\sqrt{3}\right)}, \frac{1}{11} \left(1+2 \sqrt{3}\right)\right) \\
\left(-\frac{1}{2} \sqrt{\frac{3}{2} \left(3-\sqrt{3}\right)}, -\frac{1}{11} \sqrt{\frac{1}{2} \left(27-\sqrt{3}\right)}, \frac{1}{11} \left(5-\sqrt{3}\right)\right)\\
\left(\frac{1}{2} \sqrt{\frac{3}{2} \left(3-\sqrt{3}\right)}, -\frac{1}{11} \sqrt{\frac{1}{2} \left(27-\sqrt{3}\right)}, \frac{1}{11} \left(5-\sqrt{3}\right)\right).
\end{cases}
$$
Since $\Tr(\check{f}^{(4)})=\Tr(f^{(4)})$, the same argument as above applies, and the dual principal graph is 3311.

To determine the dual data, we note there are only two possibilities for each graph: either the singly valent vertices at depth 4 are self-dual or they are dual to each other. Since $\omega_A=-1$ and $\omega_B=1$ (see Subsection \ref{sec:generators3311}), we know that $\rho^2=\id{}$ on $P^{3311}_{4,+}\ominus TL_{4,+}$. Hence the singly valent vertices must be self dual.
\end{proof}

\begin{thm}
The principal graphs of $P_\bullet^{2221}$ are 
$$
\left(
\bigraph{bwd1v1v1p1p1v1x0x0p0x1x0duals1v1v2x1},
\bigraph{bwd1v1v1p1p1v1x0x0p0x1x0duals1v1v2x1}
\right).
$$
\end{thm}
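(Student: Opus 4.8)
The plan is to follow exactly the template of Theorems \ref{thm:4442graph}, \ref{thm:3333graph}, and \ref{thm:3311graph}, specialized to $n=3$, so that the generators $A,B$ live in $P_{3,+}$ and the relevant uncappable projection is $f^{(3)}$. First I would record the modulus $\delta=\sqrt{(5+\sqrt{21})/2}\approx 2.189$, read off from the moments in Appendix \ref{sec:moments}. By Lemma \ref{lem:Supertransitive}, the $1$-strand jellyfish relations established in Subsection \ref{sec:Relations2221} already guarantee that $P_\bullet^{2221}$ is $2$-supertransitive, so the only remaining task is to pin down the structure of the principal graph $\Gamma$ beyond the branch point, i.e.\ at depth $3$ and above.

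Next, using Lemma \ref{lem:check-structure-coefficients} to multiply inside the algebra spanned by $\{A,B,f^{(3)}\}$, I would solve the idempotent equation
$$
(aA+bB+cf^{(3)})^2 = aA+bB+cf^{(3)}
$$
to obtain the minimal projections sitting one past the branch point, and then compute the trace (Frobenius--Perron dimension) of each. Using the known value of $\Tr(f^{(3)})$, these traces should match the dimensions of the depth-$3$ vertices of the $2221$ graph, showing that each arm emanating from the branch point must begin.

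The bulk of the argument is then the same norm/dimension bookkeeping as in the earlier cases. At each depth I would exhibit the candidate local modifications — two branches merging, a branch splitting, or a branch carrying a double edge — and check that the graph norm of the corresponding subgraph already exceeds $\delta$, so that none can occur and each arm must continue simply. Interleaving these norm estimates with Frobenius--Perron dimension counts forces exactly the spoke profile $2,2,2,1$, so $\Gamma=2221$. Because $\delta\approx 2.189$ is small, every forbidden configuration has norm comfortably above $\delta$, so these checks are routine.

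For the dual side, $P_\bullet^{2221}$ is symmetrically self-dual by Theorem \ref{thm:2221SelfDual}, so the dual principal graph is again $2221$. To fix the dual data I would use the rotational eigenvalues $\omega_A,\omega_B$ to determine the action of the two-click rotation $\rho$ on $P_{3,+}^{2221}\ominus TL_{3,+}$, which tells us which depth-$3$ vertices are self-dual and which are interchanged, exactly as in the proofs of Theorems \ref{thm:3333graph} and \ref{thm:3311graph}; the program {\tt FindGraphPartners} provides an independent enumeration of the compatible dual structures. The one genuinely delicate point — the main obstacle — is the dimension count at the ends of the arms: one must verify that precisely one arm has length $1$ while the other three have length $2$, rather than some other profile of equal total dimension, which requires tracking the Frobenius--Perron dimensions carefully at each stage rather than relying on the norm bounds alone.
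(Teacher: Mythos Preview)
Your outline for identifying the principal graph is essentially the paper's proof: solve the idempotent equation in $\spann\{A,B,f^{(3)}\}$ to find the three minimal projections at depth $3$, check that their traces $\sqrt{(5+\sqrt{21})/2},\sqrt{(5+\sqrt{21})/2},\sqrt{3}$ match the Frobenius--Perron dimensions of the depth-$3$ vertices of $2221$, and rule out merging, splitting, and double edges by the three graph-norm estimates. After that, the Frobenius--Perron eigenvector equations force the arm lengths to be $2,2,1$ (the $\sqrt{3}$ arm must terminate since $\delta\cdot\sqrt{3}$ equals the dimension of the branch vertex, while the two $\delta$-dimensional arms must continue once more to a dimension-$1$ vertex). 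The self-duality of the planar algebra then gives the dual graph.

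The one place your plan diverges from the paper is the determination of the dual data. You propose to read it off from the action of $\rho$ on $P_{3,+}^{2221}\ominus TL_{3,+}$, ``exactly as in'' the $3333$ and $3311$ cases. But in those cases $n=4$, so the rotation acts on $4$-boxes and the depth-$4$ vertices are at \emph{even} depth, where the contragredient involution on the principal graph lives. Here $n=3$ is odd: the depth-$3$ vertices are $N$--$M$ bimodules and carry no dual data on the principal graph, and the rotation on $P_{3,+}$ does not directly compute the involution at depth $4$. The paper instead argues as in the $4442$ case: the two depth-$4$ vertices have dimension $1$, so together with the trivial object they form a group of invertibles of order $3$, hence $\Integer/3\Integer$, forcing the two non-trivial elements to be dual to each other rather than self-dual. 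That is the argument you should use here; {\tt FindGraphPartners} would list both dual structures but would not by itself select the correct one.
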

\begin{proof}
Similar to the proof of Theorems \ref{thm:4442graph}, \ref{thm:3333graph}, and \ref{thm:3311graph}. The modulus is $\sqrt{(5+\sqrt{21})/2}\approx 2.1889$, and the minimal projections are given by $aA +bB+cf^{(3)}$ where
$$
(a,b,c)=
\begin{cases}
\begin{gathered}
\lambda_{\begin{subarray}{l}729,\\-4374,\\9477,\\-8586,\\2943,\\-270,\\153,\\-6,\\1\\\mbox{}\end{subarray}}^{(-0.0476+0.2030 i)},
 \lambda_{\begin{subarray}{l}455625,\\364500,\\1293975,\\-439020,\\1378809,\\119772,\\31257,\\-570,\\25\\\mbox{}\end{subarray}}^{(0.010242-0.025430 i)},
\frac{1}{10} \left(-1+\sqrt{21}\right)
\end{gathered}\\
\begin{gathered}
\lambda_{\begin{subarray}{l}729,\\-4374,\\9477,\\-8586,\\2943,\\-270,\\153,\\-6,\\1\\\mbox{}\end{subarray}}^{(0.02010-0.08568 i)},
\lambda_{\begin{subarray}{l}455625,\\364500,\\1293975,\\-439020,\\1378809,\\119772,\\31257,\\-570,\\25\\\mbox{}\end{subarray}}^{(-0.05749+0.14274 i)},
\frac{1}{10} \left(-1+\sqrt{21}\right) \\
\end{gathered}\\
\begin{gathered}
\lambda_{9,54,66,-3,1}^{(0.02753-0.11731 i)},
\lambda_{225,-180,-294,27,-5}^{(0.04725-0.11731 i)},
\frac{1}{5} \left(6-\sqrt{21}\right).
\end{gathered}
\end{cases}
$$
Since $\Tr(f^{(3)})=\sqrt{19+4 \sqrt{21}}$, the traces of the minimal projections are $\sqrt{\frac{5}{2}+\frac{\sqrt{21}}{2}},\sqrt{\frac{5}{2}+\frac{\sqrt{21}}{2}},\sqrt{3}$ respectively, which agree with the Frobenius-Perron dimensions of the vertices of
$$
\bigraph{gbg1v1v1p1p1v1x0x0p0x1x0}
$$
at depth $3$ reading from bottom to top. Finally,
\begin{align*}
\left\| \bigraph{gbg1v1v1p1p1v1x1x0}\right\| &\approx 2.32437,\\
\left\| \bigraph{gbg1v1v1p1p1v1x0x0p1x0x0}\right\| &\approx 2.22158, \text{ and}\\
\left\| \bigraph{gbg1v1v1p1p1v0x2x0}\right\| &\approx 2.46991.
\end{align*}
The dual principal graph is also $2221$ by Theorem \ref{thm:2221SelfDual}.

Finally, since the dimension one bimodules form a group, the dual data must be as claimed.
\end{proof}


\appendix
\section{Generators in the graph planar algebra}\label{sec:generators}
Specifying an element of the graph planar algebra of a large graph can be a somewhat cumbersome process; the element is a function on loops of a certain length on the graph, and we need to specify each value. Fortunately, if the element is a lowest weight vector, this can be significantly abbreviated.

Throughout this section, we assume that $\Gamma$ is a spoke graph with $m+1$ arms, and the initial arm is at least as long as any of the other arms. (This obviously holds for the graphs in which we are interested.)

\begin{lem}\mbox{}%
\label{lem:collapsing}
\begin{enumerate}
\item[(1)]
A lowest weight vector in $A \in \cG(\Gamma)_{n,+}$ is determined by its values on `collapsed' loops which stay within distance one of the central vertex. 
\item[(2)]
In fact, it is determined by its values on such loops which never enter one of the spokes of our choice.
\item[(3)] 
Further, the value on any loop which has more than $2k+1$ consecutive vertices which either lie on a particular arm of the graph of length $k-1$ or are the central vertex is zero.
\end{enumerate}
\end{lem}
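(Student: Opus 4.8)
\emph{Proof proposal.} The engine for all three parts is the uncappability (``low weight'') of $A$, which I would first unwind into a single explicit family of linear relations among the values of $A$ on loops. Writing $\mu$ for the Perron--Frobenius weighting of the vertices of $\Gamma$, uncappability at one strand says exactly that, for every loop context in which a fixed vertex $u$ occurs flanked on both sides by the same vertex,
$$
\sum_{v \sim u} \sqrt{\tfrac{\mu(v)}{\mu(u)}}\; A(\dots, u, v, u, \dots) = 0,
$$
the sum running over the neighbours $v$ of $u$ with the remaining entries of the loop held fixed. Every reduction below is an instance of this one relation; the whole game is to choose the vertex $u$ cleverly.

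For part (1), I would induct on the pair (maximal distance $d$ of $\ell$ from the central vertex, number of positions attaining it). If $d \geq 2$, pick a position $j$ with $\ell_j$ at distance $d$ and call this vertex $m$. Since $m$ lies strictly inside an arm, its only neighbour of distance $\leq d$ is its inward neighbour $m^-$, which forces $\ell_{j-1} = \ell_{j+1} = m^-$, i.e.\ a backtrack. Applying the displayed relation with $u = m^-$ (whose neighbours are $m$ and the next inward vertex $m^{--}$) expresses $A(\ell)$ as a scalar multiple of the value of $A$ on the loop obtained by replacing the distance-$d$ excursion at position $j$ by a distance-$(d-2)$ excursion into $m^{--}$. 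This strictly decreases the induction measure, and the induction terminates with loops lying within distance one of the centre, so the values on these collapsed loops determine $A$.

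For part (2), I would apply the same relation with $u$ equal to the central vertex $c$: its neighbours are exactly the first vertices of the spokes, so the relation expresses the value of $A$ on the loop whose excursion enters our chosen spoke as a combination of its values on loops whose excursions enter the other spokes. Hence no collapsed loop need enter the chosen spoke, and since the initial arm is assumed longest we are free to make this choice. For part (3) I would use the relation at the \emph{leaf} end of the short arm: whenever the loop sits at the tip $u_{k-1}$ it must backtrack off the unique neighbour $u_{k-2}$, and the relation with $u = u_{k-2}$ reflects that excursion two steps inward. Repeatedly reflecting a run confined to the $k$-vertex path $\{c = u_0, u_1, \dots, u_{k-1}\}$ telescopes, and once the run exceeds $2k+1$ consecutive vertices the path is too short to accommodate the reflected excursions consistently, forcing $A(\ell)=0$.

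The main obstacle is the bookkeeping in part (3): one must track all the terms produced by the leaf relation simultaneously across the whole run and check that the sharp length $2k+1$ is exactly where the resulting system collapses to $0$ (morally, a walk confined to the type-$A_k$ path cannot support more through-strands than the path has room for, so an uncappable $A$ must vanish there). The reductions in parts (1) and (2) are routine once the capping relation is in hand; it is matching this combinatorial count, and handling the extra neighbours of the central vertex when it falls inside the run, that will require care.
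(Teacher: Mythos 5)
Your parts (1) and (2) are correct and are essentially the paper's own argument: the single capping relation you display (the paper's version carries exponents $k_i\in\{1,2,3\}$ on the square roots, counting critical points on the cap strand, but this only changes which nonzero scalars appear), applied at the flanking vertex one step inward for the descent in (1), and at the central vertex for (2).

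Part (3), however, has a genuine gap, and it sits exactly where you place ``the main obstacle.'' Your move reflects excursions \emph{inward}: $(u_{k-2},u_{k-1},u_{k-2})\mapsto(u_{k-2},u_{k-3},u_{k-2})$, using the two-term relation at $u_{k-2}$. This is the wrong direction, for two reasons. First, it is monotone -- it strictly decreases $\sum_i d(\ell(i),c)$ -- so it can never produce a cycle of substitutions forcing $x=\lambda x$ with $\lambda\neq1$, and since every relation it invokes has two terms with nonzero coefficients, it can never force a value to vanish at all. Second, it terminates in walks hugging the central vertex, where the only applicable capping relation is the one at $c$, which involves \emph{all} the spokes; this is precisely the ``extra neighbours'' problem you flag but do not resolve. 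Any proof of (3) needs a \emph{one-term} relation as the source of the zero, and there is exactly one available: if a loop visits a univalent vertex (the tip of the arm) at positions $i$ and $i+2$, then capping at position $i+1$ has the leaf as flanking vertex, which has a unique neighbour, so the relation reads $0=(\text{nonzero scalar})\cdot A(\ell)$. That observation is the first step of the paper's proof. The second step pushes interior local \emph{minima} of the confined run outward: a local minimum is flanked by a vertex at distance $\geq 1$ from $c$, which has exactly two neighbours, so these relations stay confined to the arm and never meet the central vertex's other spokes; each push increases $\sum_i d(\ell(i),c)$ by $2$, which is bounded above; and the process cannot halt before creating the leaf-flanking pattern $(u_{k-1},u_{k-2},u_{k-1})$, because a run with no interior local minimum is unimodal, and a unimodal walk confined to the $k$-vertex path $\{c,u_1,\dots,u_{k-1}\}$ has at most $2(k-1)$ steps -- fewer than your run of more than $2k+1$ vertices has. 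Reversing the direction of your reflection and adding the univalent-vertex vanishing as the terminal case turns your outline into the paper's proof.
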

\begin{proof}
We'll work in the spherical graph planar algebra, as it is somewhat easier to state the requisite formulas there. Obviously the lemma holds in the spherical planar algebra if and only if it holds in the lopsided graph planar algebra.

Call the central vertex of $\Gamma$ $c$.
We'll write $\norm{\gamma} = \sum_i d(\gamma(i), c)$. For a collapsed loop, $\norm{\gamma} = n/2$, while for any other loop $\norm{\gamma} > n/2$. We'll show that for any non-collapsed loop $\gamma$, if $A$ is a lowest weight vector, then $A(\gamma)$ is determined by the value of $A$ on loops of strictly smaller norm. Inductively, this gives the result.

Suppose $\gamma$ is a loop of length $n$ on $\Gamma$, with 
$d(\gamma(i), c) \geq 2$ and $d(\gamma(i\pm1), c) = d(\gamma(i), c) -1$. (That is, $i$ is a position on the loop where $\gamma$ reaches a local maximum distance from the centre.)
Consider the modified loop $\gamma'$, which agrees with $\gamma$ except at position $i$, where it passes through the vertex 2 closer to the centre than $\gamma(i)$ (possibly the central vertex itself). Consider also the `snipped' loop $\pi$ of length $n-2$, obtained from $\gamma$ or $\gamma'$ by removing the $i$-th and $i+1$-th positions.
We name the vertices as $s = \gamma(i)$, $r = \gamma(i\pm1)$, and $t = \gamma'(i)$.
 
Applying a cap at position $i$ to $A$, we have $\cap_i (A) = 0$. Evaluating this at $\pi$ gives
\begin{align*}
0 = \sqrt{\dim(r)}^{k_i} \;{\cap_i(A)(\pi)} & = \sqrt{\dim(s)}^{k_i} A(\gamma) + \sqrt{\dim(t)}^{k_i} A(\gamma').
\end{align*}
(Here $k_i$ is the number of critical points in the cap strand, either $1$, $2$ or $3$ depending on the position of the point $i$ around the boundary of the rectangular box, as follows
\begin{equation*}
k_i = \begin{cases}
1 & \text{when we have $\tikz[baseline=7]{\draw[fill=gray] (0,0) rectangle (1,0.5); \draw (0.3,0.5) arc (180:90:0.2) node[above left] {$i$} arc(90:0:0.2);}$ or $\tikz[baseline=0]{\draw[fill=gray] (0,0) rectangle (1,0.5); \draw (0.3,0) arc (180:270:0.2) node[right=6pt] {$i$} arc(270:360:0.2);}$} \\
2 & \text{when we have $\tikz[baseline=4]{\draw[fill=gray] (0,0) rectangle (1,0.5); \draw (0.8,0.5) arc (180:0:0.2) -- node[right] {$i$} (1.2,0) arc(0:-180:0.2);}$ or $\tikz[baseline=4]{\draw[fill=gray] (0,0) rectangle (1,0.5); \draw (0.2,0.5) arc (0:180:0.2) -- node[left] {$i$} (-0.2,0) arc(180:360:0.2);}$} \\
3 & \text{when we have $\tikz[baseline=5]{\path (0,0) -- (0,1); \draw[fill=gray] (0,0) rectangle (1,0.5); \draw (0.2,0.5) arc (0:180:0.2) -- (-0.2,0) arc(180:270:0.7) node[above] {$i$} arc (270:360:0.7) -- (1.2,0.5) arc (0:180:0.2);}$ or $\tikz[y=-1cm,baseline=5]{\path (0,0) -- (0,1); \draw[fill=gray] (0,0) rectangle (1,0.5); \draw (0.2,0.5) arc (0:180:0.2) -- (-0.2,0) arc(180:270:0.7) node[below] {$i$} arc (270:360:0.7) -- (1.2,0.5) arc (0:180:0.2);}$}
\end{cases}
\end{equation*}
although the case $k=3$ never occurs for us as we always consider boxes with equal numbers of strands above and below.)

For the second statement, consider some collapsed loop $\alpha$ of length $n-2$. Write $\alpha_{i,j}$ for the collapsed loop of length $n$ which makes an extra visit to the $j$-th spoke of the graph at position $i$, and write $u_j$ for the vertex adjacent to the central vertex on the $j$-th spoke. (If the graph has even supertransitivity, then $i$ ought to be odd, while if the graph has odd supertransitivity, $i$ is even.) Now,
\begin{equation}
\label{eq:cap-at-central-vertex}
0 = \sqrt{\dim(c)}^{k_i} \;{\cap_i(A)(\pi)}  = \sum_j \sqrt{\dim(u_j)}^{k_i} A(\alpha_{i,j}).
\end{equation}
Using this formula, we can express the value of $A$ on any collapsed loop which visits some spoke in terms of other collapsed loops which visit that spoke strictly fewer times.

For the final statement, note that $A$ has value zero on any loop which visits a univalent vertex at positions $i$ and $i+2$ by a similar argument as above. The value of $A$ on a loop with $2k+1$ consecutive vertices either in a fixed spoke of length $k-1$ or at the central vertex is then a multiple of the value of $A$ on a loop that visits the end of that spoke twice consecutively by our first argument, and is thus also zero.
\end{proof}

\begin{cor}
If $A$ is a lowest weight vector in a spherical graph planar algebra, 
and $\hat{\gamma}$ denotes the `collapsed' loop corresponding to $\gamma$, then
\begin{equation}
\label{eq:collapsing}
A(\gamma) = (-1)^{(\norm{\gamma}-\norm{\hat{\gamma}})/2} \left( \prod_{i} \sqrt{\frac{\dim(\hat{\gamma}(i))}{\dim(\gamma(i))}}^{k_i} \right) A(\hat{\gamma}).
\end{equation}
\end{cor}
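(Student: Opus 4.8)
The plan is to prove the corollary by induction on the nonnegative integer $(\norm{\gamma}-\norm{\hat\gamma})/2$, upgrading the qualitative content of Lemma \ref{lem:collapsing} (that $A(\gamma)$ is determined by values on loops of strictly smaller norm) into the explicit product formula \eqref{eq:collapsing} by telescoping. The base case is when $\gamma$ is already collapsed, so $\hat\gamma=\gamma$; then $\norm{\gamma}-\norm{\hat\gamma}=0$, the sign is $+1$, every ratio $\dim(\hat\gamma(i))/\dim(\gamma(i))$ equals $1$, and \eqref{eq:collapsing} reduces to the tautology $A(\gamma)=A(\gamma)$.

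For the inductive step, suppose $\gamma$ is not collapsed. First I would locate a position $i$ at which $\gamma$ attains its maximal distance from the central vertex $c$; since $\gamma$ is not collapsed, this maximum is $\geq 2$. Because $\Gamma$ is a tree, the distance to $c$ changes by exactly $1$ across each edge, so at a global maximum both neighbours $\gamma(i\pm 1)$ lie at distance $d(\gamma(i),c)-1$ and in fact coincide with the unique vertex $r$ one step toward $c$; this is precisely the local-maximum configuration used in the proof of Lemma \ref{lem:collapsing}. Applying uncappability ($\cap_i(A)=0$) at this position gives, as in that proof,
$$
A(\gamma) = -\sqrt{\frac{\dim(t)}{\dim(s)}}^{\,k_i} A(\gamma'),
$$
where $s=\gamma(i)$, the vertex $t=\gamma'(i)$ is two steps toward $c$ along the same spoke, and $\gamma'$ agrees with $\gamma$ except at position $i$. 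The key structural observations are that $\gamma'$ remains on the same spoke at position $i$, that $\norm{\gamma'}=\norm{\gamma}-2$, and that this move does not alter the collapse class, so $\widehat{\gamma'}=\hat\gamma$.

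Now I would feed $\gamma'$ into the inductive hypothesis and telescope. The sign acquires one extra factor of $-1$, matching $(-1)^{(\norm{\gamma'}-\norm{\hat\gamma})/2+1}=(-1)^{(\norm{\gamma}-\norm{\hat\gamma})/2}$. For the dimension product, every position $j\neq i$ is untouched, while at position $i$ the new factor combines with the one from the hypothesis as $\sqrt{\dim(t)/\dim(s)}^{\,k_i}\cdot\sqrt{\dim(\hat\gamma(i))/\dim(t)}^{\,k_i}=\sqrt{\dim(\hat\gamma(i))/\dim(\gamma(i))}^{\,k_i}$, which is exactly the factor demanded by \eqref{eq:collapsing}. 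I expect the only genuinely delicate point to be the geometric bookkeeping: confirming that every non-collapsed loop exhibits a reducible local maximum at distance $\geq 2$, that the reduction changes a single position and keeps it on one spoke (so both the exponent $k_i$ and the collapse class are preserved), and that $\dim(t)$ cancels cleanly in the telescoping. Once these are in place the induction closes immediately.
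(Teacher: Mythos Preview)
Your proof is correct and is precisely the argument the paper leaves implicit: the Corollary is stated without proof immediately after Lemma~\ref{lem:collapsing}, the intent being that one iterates the single-step relation $A(\gamma)=-\bigl(\sqrt{\dim t/\dim s}\bigr)^{k_i}A(\gamma')$ derived there until the loop is collapsed. Your induction on $(\norm{\gamma}-\norm{\hat\gamma})/2$, with the telescoping of the dimension ratios and the observation that $\widehat{\gamma'}=\hat\gamma$ because the reduction preserves both the parity of $d(\gamma(i),c)$ and the spoke, is exactly this iteration made explicit.
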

The corresponding formula for a lowest weight vector in a lopsided graph planar algebra is
\begin{equation}
\label{eq:collapsing-lopsided}
A(\gamma) = (-1)^{(\norm{\gamma}-\norm{\hat{\gamma}})/2} \left( \prod_{i} \left(\frac{\dim(\hat{\gamma}(i))}{\dim(\gamma(i))}\right)^{\ell_i} \right) A(\hat{\gamma})
\end{equation}
where 
\begin{equation*}
\ell_i = \begin{cases}
0 & \text{when we have $\tikz[baseline=7]{\draw[fill=gray] (0,0) rectangle (1,0.5); \draw (0.3,0.5) arc (180:90:0.2) node[above left] {$i$} arc(90:0:0.2);}$} \\
1 & \text{when we have $\tikz[baseline=0]{\draw[fill=gray] (0,0) rectangle (1,0.5); \draw (0.3,0) arc (180:270:0.2) node[right=6pt] {$i$} arc(270:360:0.2);}$, $\tikz[baseline=4]{\draw[fill=gray] (0,0) rectangle (1,0.5); \draw (0.8,0.5) arc (180:0:0.2) -- node[right] {$i$} (1.2,0) arc(0:-180:0.2);}$, $\tikz[baseline=4]{\draw[fill=gray] (0,0) rectangle (1,0.5); \draw (0.2,0.5) arc (0:180:0.2) -- node[left] {$i$} (-0.2,0) arc(180:360:0.2);}$ or $\tikz[baseline=5]{\path (0,0) -- (0,1); \draw[fill=gray] (0,0) rectangle (1,0.5); \draw (0.2,0.5) arc (0:180:0.2) -- (-0.2,0) arc(180:270:0.7) node[above] {$i$} arc (270:360:0.7) -- (1.2,0.5) arc (0:180:0.2);}$} \\
2 & \text{when we have $\tikz[y=-1cm,baseline=5]{\path (0,0) -- (0,1); \draw[fill=gray] (0,0) rectangle (1,0.5); \draw (0.2,0.5) arc (0:180:0.2) -- (-0.2,0) arc(180:270:0.7) node[below] {$i$} arc (270:360:0.7) -- (1.2,0.5) arc (0:180:0.2);}$}\,.
\end{cases}
\end{equation*}
(The exponent $\ell_i$ is just the number of minima on the strand.)

Rotation acts on the set of collapsed loops, so if we are trying to specify a lowest weight vector $A$ which is also a rotational eigenvector, then it suffices to specify $A$ only on a representative of each such orbit.

\begin{lem}
\label{lem:verify-generator}
Fix $\omega$ an $n$-th root of unity.
Suppose we have specified the values of an element $A$ on a rotation representative of each collapsed loop which avoids the initial arm, and further that
\begin{enumerate}[(a)]
\item if a representative is fixed by the $k$-fold rotation, and $\omega^k \neq 1$, the corresponding value of $A$ is zero, and
\item condition (3) of Lemma \ref{lem:collapsing} holds where appropriate on these values, i.e., if a loop visits an arm of length $k-1$ at least $k$ consecutive times, the corresponding value of $A$ is zero.
\end{enumerate}
Then we can make three consecutive well-defined extensions, defining, in turn, the values of $A$ 
\begin{enumerate}[(1)]
\item  on every collapsed loop avoiding the initial arm, using the condition that $A$ is a rotational eigenvector with some eigenvalue $\omega$,
\item on every collapsed loop, via Equation \eqref{eq:cap-at-central-vertex}, and finally
\item on every loop, via Equation \eqref{eq:collapsing}.
\end{enumerate}

The resulting element $A$ is a rotational eigenvector, and is a lowest weight vector if and only if
$$A(00\cdots0) = (-1)^n \sum_{a_i=1,\ldots,m} \left(\prod_{i=1}^n \left( \frac{\dim{u_{a_i}}}{\dim{u_0}}\right)^{k_i/2}\right) A(a_1a_2\cdots a_n) = 0$$
(here we denote the value of $A$ on the collapsed loop which successively visits arms $a_1, a_2, \ldots, a_n$ by $A(a_1a_2\cdots a_n)$).
\end{lem}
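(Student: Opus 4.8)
The plan is to handle the three extensions in turn, show that the output is automatically a rotational eigenvector, and then isolate the single cap relation whose vanishing is the stated criterion. Throughout I work in the spherical graph planar algebra $\cG(\Gamma)$, as permitted by the reduction at the start of the proof of Lemma~\ref{lem:collapsing}, and I recall that a lowest weight vector is exactly an uncappable one, i.e.\ one with $\cap_i(A)=0$ for every boundary cap.

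\textbf{Well-definedness of the three extensions.} First I would note that ``avoiding the initial arm'' is rotation-invariant on collapsed loops (a cyclic shift does not change which vertices are visited), so $\rho$ genuinely acts on this set, and extension~(1) just propagates the prescribed values along rotation orbits via $A(\rho^{-1}\gamma)=\omega A(\gamma)$. The only consistency requirement occurs at a loop fixed by $\rho^{k}$, where one needs $\omega^{k}A(\gamma)=A(\gamma)$; this is precisely hypothesis~(a), while hypothesis~(b) records the short-arm vanishing already forced among these loops. For extension~(2) I would read the relations \eqref{eq:cap-at-central-vertex}, one at each of the $n$ excursion positions, as saying that the excursion-arm tensor lies in $H_1\otimes\cdots\otimes H_n$, where each $H_i\subset\Complex^{m+1}$ is the hyperplane annihilated by the weight covector $\big(\sqrt{\dim u_j}^{\,k_i}\big)_{j=0}^{m}$. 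Since $\dim u_0\neq 0$, each $H_i$ maps isomorphically under restriction to the coordinates $1,\dots,m$, so the restriction $H_1\otimes\cdots\otimes H_n\to(\Complex^{m})^{\otimes n}$ to loops avoiding arm~$0$ is an isomorphism; this yields a unique extension satisfying every central cap simultaneously. Extension~(3) is then immediate: \eqref{eq:collapsing} is a closed formula for $A(\gamma)$ in terms of $A(\hat\gamma)$ and the canonical collapse $\hat\gamma$, and its scalar is independent of the order in which local maxima are lowered because lowerings at distinct maxima act on disjoint positions.

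\textbf{Rotational eigenvector.} I would deduce this from the uniqueness just established. Both \eqref{eq:cap-at-central-vertex} and \eqref{eq:collapsing} are rotation-equivariant, and $\hat{\rho\gamma}=\rho\hat\gamma$, so $\omega^{-1}\rho A$ satisfies the same defining relations as $A$. Since $A$ was built as an $\omega$-eigenvector on the arm-$0$-avoiding collapsed loops, the two elements agree there, hence everywhere, giving $\rho A=\omega A$.

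\textbf{The lowest weight criterion.} This is where the real work lies. Using the eigenvector property, $\cap_{i+2}(A)=0$ follows from $\cap_i(A)=0$, so uncappability reduces to one cap of each parity. Every cap splits, according to whether its ``spike'' sits at the central vertex or not, into a central relation \eqref{eq:cap-at-central-vertex} or a collapsing relation \eqref{eq:collapsing}; by construction extensions~(2) and~(3) force all of the former and all of the latter that reduce a loop to strictly shorter ones. I would then show that the only cap relation not consumed this way is the one at the distinguished interval acting on the deepest arm-$0$ configuration: iterating \eqref{eq:cap-at-central-vertex} at all $n$ excursions rewrites its value as $A(00\cdots0)=(-1)^{n}\sum_{a_i\in\{1,\dots,m\}}\big(\prod_i(\dim u_{a_i}/\dim u_0)^{k_i/2}\big)A(a_1\cdots a_n)$, so this residual cap vanishes exactly when the displayed quantity is zero. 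The main obstacle is precisely this final bookkeeping --- checking that after extensions~(2) and~(3) every cap except this one is automatic, so that being lowest weight is equivalent to the single scalar identity. If a cleaner route is preferred, one can argue by dimensions instead: the construction parametrizes a space exactly one larger than the space of rotational $\omega$-eigenvector lowest weight vectors controlled by Lemma~\ref{lem:collapsing}, and the displayed equation is the unique linear functional cutting it down.
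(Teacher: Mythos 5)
The paper states this lemma without proof --- it is meant to follow by rereading the proof of Lemma \ref{lem:collapsing}, whose three kinds of cap relations (based at bivalent arm vertices, at the central vertex, and at univalent arm ends) supply everything needed --- so your proposal has to be judged on its own merits. Its first half holds up well: the consistency of extension (1) via hypothesis (a) is exactly right, and your identification of the joint solution space of the relations \eqref{eq:cap-at-central-vertex} with $H_1\otimes\cdots\otimes H_n$, together with the observation that forgetting the arm-$0$ coordinate gives an isomorphism $H_i\to\Complex^m$, is a clean way to see that extension (2) is well defined and satisfies all central-vertex relations simultaneously (arguably cleaner than the recursive elimination suggested in the proof of Lemma \ref{lem:collapsing}). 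The rotational-equivariance/uniqueness argument is also fine.

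The lowest-weight criterion, however, is where your proof genuinely breaks. Your accounting --- ``every cap splits into a central relation \eqref{eq:cap-at-central-vertex} or a collapsing relation \eqref{eq:collapsing}'' --- omits a third type: caps whose inserted round trip is based at a \emph{univalent} vertex, the end of a spoke. Such a vertex has a single neighbour, so the relation has one term and says $A$ vanishes on every loop that bounces off the end of an arm. These are precisely the relations that are \emph{not} automatic from extensions (2) and (3): via \eqref{eq:collapsing} they are equivalent to the vanishing of $A$ on collapsed loops making at least $k$ consecutive visits to an arm of length $k-1$. For the non-initial arms this is hypothesis (b) --- note that your lowest-weight argument never invokes (b), although the conclusion is false without it --- while for the initial arm (the longest one, with $n$ exceeding its length by one in the intended setting) the only such collapsed loop is $00\cdots0$, and this is where the stated criterion actually comes from. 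By contrast, the relation you single out as ``residual'' is a central-vertex relation, but all of those are satisfied by construction: the displayed equation is the \emph{definition} of $A(00\cdots0)$ under extension (2) and holds whether or not $A$ is lowest weight; what forces that number to vanish is the bounce relation at the end of the initial arm, propagated through \eqref{eq:collapsing}. So the ``final bookkeeping'' you defer would not close as described. The fallback dimension count fares no better: Lemma \ref{lem:collapsing} gives injectivity of restriction to collapsed loops, not a parametrization of the lowest-weight space, so it cannot certify that your construction produces a space ``exactly one larger'' than it.
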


\begin{ex}
Consider the Haagerup principal graph, with $n=4$ and three arms of length 3. There are 5 representatives under rotation of collapsed loops avoiding the initial arm, namely $1111, 1112, 1122, 1212, 1222$ and $2222$. Condition (3) of Lemma \ref{lem:collapsing} says that $A(1111)$ and $A(2222)$ must be zero if we want to extend $A$ to a lowest weight vector. Further, if $\omega = \pm i$ then $A(1212)$ must be zero also. Finally, the condition $A(0000) = 0$ is trivially true when $\omega \neq 1$, since there $A(0000)= \omega A(0000)$, while it is non-trivial when $\omega = 1$. Thus the the rotational eigenspaces of lowest weight $4$-boxes in the graph planar algebra are 3 dimensional for $\omega = 1, i, -i$, and 4 dimensional for $\omega = -1$.
\end{ex}

Thus, in each of the following subsections, we list representatives of the rotational orbits of the collapsed loops avoiding the initial spoke. 
We then list the values of our generators on these loops, and refer to the function {\tt lowestWeightCondition} in the {\tt Mathematica} notebook {\tt Generators.nb} (included with the {\tt arXiv} sources of this article) for the elementary check of Lemma \ref{lem:verify-generator} that these actually determine a lowest weight vector with the desired rotational eigenvector. Note that this notebook takes quite a while to run on 4442, as it needs to lift the specified values, expressed as particular roots of their minimal polynomials, back to the fixed number field $\mathbb{Q}(\mu_{4442})$ described below. This notebook also regenerates all the values in accordance with the above lemma, in a format compatible with the {\tt FusionAtlas} package. This notebook, however, is completely independent of that package.
We note that this method of describing lowest weight vectors was implicitly used in \cite{0909.4099}, but without explanation of why it is always possible.

\subsection{4442}\label{sec:generators4442}

We give here two generators $A_0$ and $B_0$ which are rotational eigenvectors and lowest weight vectors, but are not self adjoint. We'll correct them in Subsection \ref{sec:moments4442} by a phase to obtain self-adjoint elements. 

The two generators $A_0,B_0$ for 4442 have rotational eigenvalues 
\begin{align*}
\omega_A &=\exp\left(2 \pi i\frac{3}{5}\right)  & \omega_B&=\exp\left(2 \pi i\frac{2}{5}\right) \\
\intertext{and we chose square roots $\sigma_A,\sigma_B$:}
\sigma_A &=\exp\left(2 \pi i \frac{8}{10}\right) & \sigma_B &=\exp\left(2 \pi i\frac{2}{10}\right).
\end{align*}

We express $A_0,B_0$ here by giving their coefficients on representatives of the rotational orbits of the collapsed loops. We write these coefficients as algebraic numbers, that is, roots of certain integer coefficient polynomials. Recall that the notation $\lambda_{a_n, \ldots, a_0}^{(z)}$ indicates the root of the polynomial $\sum a_i x^i$ which is approximately equal to $z$. (We give $z$ to sufficiently high precision that it clearly distinguishes amongst the roots.) In fact, we know that all of these numbers lie in a single number field, but it is a terrifying one: $\mathbb{Q}(\mu_{4442})$, where $\mu_{4442}$ is the root of
\begin{dmath*}
 x^{16}-137624 x^{15}+8933996874 x^{14}-350479594607884 x^{13}+9011981487580477099 x^{12}-153965505437561352450336 x^{11}+1677614319697333636358399288 x^{10}-10020910478354387137539071365292 x^9+14924720493503061057816631839615921 x^8+106599644350188183735570113321511839620 x^7+190438222993363250317220258290893662331668 x^6+186721586395717486269136749424837851231468948 x^5+116885475358299597752661285079387479235051171199 x^4+48661380081758389460116404892672194953761868153880 x^3+13292626196670539867585523753990034583320535147867050 x^2+2196889811116840388811767217587569058102181055950438988 x+171603625204099732635330174190840922237651443646904957481
 \end{dmath*}
which is approximately $ 17589.4 + 13246.7 i $. The polynomials in $\mu_{4442}$ required to express these numbers themselves tend to be horrific (coefficients whose numerator and denominator may have hundreds of digits), and we chose to spare the reader from the danger of trying to read them. The overenthusiastic may of course view them in the \code{Mathematica} notebook. Nevertheless, it is important to remember that the calculation of moments in \S \ref{sec:moments} must be performed inside this number field in order to be tractable.

The reason we work with the non-self-adjoint generators $A_0$ and $B_0$ is simply that the coefficients of the phase corrected generators require an even larger number field; sufficiently large, in fact, that our computers can't perform the necessary calculations there!

{\small
\begin{align*}
A_0(11112) & = \lambda_{3941830656,370676736,12897792,-5087232,111600,36792,852,-6,1}^{(0.009547+0.023000 i)} \displaybreak[1]\\
A_0(11113) & = \lambda_{1247219856,379505736,55366092,5211378,385155,25407,1422,54,1}^{(0.026510+0.063867 i)} \displaybreak[1]\\
A_0(11122) & = \lambda_{3941830656,2911168512,562837248,25208064,6989040,233784,7428,42,1}^{(-0.37586+0.02986 i)} \displaybreak[1]\\
A_0(11123) & = \lambda_{77951241,-40286727,21922488,-4650534,645165,-27324,2988,-27,31}^{(0.13774+0.13191 i)} \displaybreak[1]\\
A_0(11132) & = \lambda_{1247219856,652215888,202145868,6602796,6554925,997056,165888,9768,496}^{(0.11518-0.15201 i)} \displaybreak[1]\\
A_0(11133) & = \lambda_{77951241,-34088769,8046702,-1582902,130005,-10098,2412,24,16}^{(0.22307-0.01772 i)} \displaybreak[1]\\
A_0(11212) & = \lambda_{15397776,-13490712,3167748,-6096654,2141865,640836,414528,2883,961}^{(-0.15040+0.62850 i)} \displaybreak[1]\\
A_0(11213) & = \lambda_{1247219856,3177168624,5503926672,4667265522,2501734815,231250113,72236142,4203666,122461}^{(-0.57158-1.17710 i)} \displaybreak[1]\\
A_0(11222) & = \lambda_{3941830656,2911168512,562837248,25208064,6989040,233784,7428,42,1}^{(0.024549-0.102591 i)} \displaybreak[1]\\
A_0(11223) & = \lambda_{1247219856,-598818096,213923592,-131814378,67404555,-2266407,34767,-279,1}^{(-0.181705-0.437753 i)} \displaybreak[1]\\
A_0(11232) & = \lambda_{1247219856,1315874160,1776121020,856453500,225182025,32508000,1964250,13125,625}^{(-0.23036+0.96265 i)} \displaybreak[1]\\
A_0(11233) & = \lambda_{77951241,-49106898,33170958,-32103216,9349425,-303966,101448,1392,16}^{(0.48303-0.03838 i)} \displaybreak[1]\\
A_0(11312) & = \lambda_{1247219856,-884877696,195611112,-129102498,2093568525,564703758,25239672,-2431059,122461}^{(1.04250-0.79086 i)} \displaybreak[1]\\
A_0(11313) & = \lambda_{77951241,74852262,35328798,11706039,2207655,166914,12168,432,16}^{(-0.08426+0.35214 i)} \displaybreak[1]\\
A_0(11322) & = \lambda_{35316,-24678,7389,168,1}^{(0.360178-0.308085 i)} \displaybreak[1]\\
A_0(11323) & = \lambda_{77951241,-155425716,96700392,-79082163,68641425,-4964112,278712,-144,16}^{(-0.18246+0.76250 i)} \displaybreak[1]\\
A_0(11332) & = \lambda_{77951241,121336947,113029263,60444549,17861715,2067174,73368,-1248,16}^{(-0.41334-0.25287 i)} \displaybreak[1]\\
A_0(12122) & = \lambda_{15397776,-13490712,3167748,-6096654,2141865,640836,414528,2883,961}^{(0.77203-0.06134 i)} \displaybreak[1]\\
A_0(12123) & = \lambda_{81,108,144,72,16}^{(-0.3333+1.0259 i)} \displaybreak[1]\\
A_0(12132) & = \lambda_{77951241,-75805794,99558072,-91700667,145832805,-61946748,27924912,-9145536,3041536}^{(-0.4911-0.9604 i)} \displaybreak[1]\\
A_0(12133) & = \lambda_{77951241,-151373205,49269465,15230025,11397915,918000,319500,6000,400}^{(1.15226-0.09155 i)} \displaybreak[1]\\
A_0(12213) & = \lambda_{1247219856,-2526859800,1974773520,-491005800,55811025,-10587375,2076750,-45000,625}^{(-0.06347-0.15291 i)} \displaybreak[1]\\
A_0(12222) & = \lambda_{3941830656,370676736,12897792,-5087232,111600,36792,852,-6,1}^{(0.075927+0.046450 i)} \displaybreak[1]\\
A_0(12223) & = \lambda_{77951241,31228173,13579083,7130106,2059425,324216,29448,1458,31}^{(0.13497-0.38939 i)} \displaybreak[1]\\
A_0(12232) & = \lambda_{1247219856,1670588064,-268120368,1191806622,1842147765,-527829912,98070912,-6203229,122461}^{(0.49113-0.87531 i)} \displaybreak[1]\\
A_0(12233) & = \lambda_{77951241,83195667,22443723,14206509,7526925,-899316,106308,-48,16}^{(-0.70611+0.05610 i)} \displaybreak[1]\\
A_0(12313) & = \lambda_{77951241,-175449888,329852088,-416257056,333376560,-158331456,44336448,-6899328,476416}^{(0.07491+1.38254 i)} \displaybreak[1]\\
A_0(12322) & = \lambda_{1247219856,-884877696,195611112,-129102498,2093568525,564703758,25239672,-2431059,122461}^{(-0.55570+0.83581 i)} \displaybreak[1]\\
A_0(12323) & = \lambda_{77951241,325154412,587305728,611166384,426416400,193325184,52769088,7788672,476416}^{(-0.24343-0.67826 i)} \displaybreak[1]\\
A_0(12332) & = \lambda_{77951241,-134686395,185147775,-112077675,34665165,-5968350,586800,-24000,400}^{(0.031564+0.019310 i)} \displaybreak[1]\\
A_0(13132) & = \lambda_{190096,-298224,48892,140748,-31635,79392,9592,14424,29776}^{(1.1727-0.5217 i)} \displaybreak[1]\\
A_0(13133) & = \lambda_{11881,31174,44152,52112,25680,11968,17152,-1024,4096}^{(-1.3384+0.1063 i)} \displaybreak[1]\\
A_0(13222) & = \lambda_{3041536,-390656,219552,-53008,49325,-14677,3417,-389,31}^{(-0.26442+0.27576 i)} \displaybreak[1]\\
A_0(13223) & = \lambda_{190096,419432,437068,227614,74265,-12256,2458,97,1}^{(-0.90634-0.55448 i)} \displaybreak[1]\\
A_0(13232) & = \lambda_{190096,-298224,48892,140748,-31635,79392,9592,14424,29776}^{(-0.6626+0.0854 i)} \displaybreak[1]\\
A_0(13233) & = \lambda_{11881,-15587,11038,-6514,1605,-374,268,8,16}^{(0.6692-0.0532 i)} \displaybreak[1]\\
A_0(13322) & = \lambda_{190096,155216,93912,6998,-10335,-2908,1022,39,1}^{(-0.25174-0.60649 i)} \displaybreak[1]\\
A_0(13323) & = \lambda_{11881,-3052,1228,1861,255,-1174,628,-152,16}^{(0.2574+0.6200 i)} \displaybreak[1]\\
A_0(22223) & = \lambda_{1247219856,379505736,55366092,5211378,385155,25407,1422,54,1}^{(0.026510+0.063867 i)} \displaybreak[1]\\
A_0(22233) & = \lambda_{77951241,-34088769,8046702,-1582902,130005,-10098,2412,24,16}^{(0.22307-0.01772 i)} \displaybreak[1]\\
A_0(22323) & = \lambda_{77951241,74852262,35328798,11706039,2207655,166914,12168,432,16}^{(-0.08426+0.35214 i)} \displaybreak[1]\\
A_0(23233) & = \lambda_{11881,31174,44152,52112,25680,11968,17152,-1024,4096}^{(-1.3384+0.1063 i)}
\end{align*}\begin{align*}
B_0(11112) & = \lambda_{3941830656,370676736,12897792,-5087232,111600,36792,852,-6,1}^{(0.075927+0.046450 i)} \displaybreak[1]\\
B_0(11113) & = \lambda_{1247219856,379505736,55366092,5211378,385155,25407,1422,54,1}^{(-0.058988-0.036087 i)} \displaybreak[1]\\
B_0(11122) & = \lambda_{3941830656,2911168512,562837248,25208064,6989040,233784,7428,42,1}^{(0.024549-0.102591 i)} \displaybreak[1]\\
B_0(11123) & = \lambda_{77951241,-40286727,21922488,-4650534,645165,-27324,2988,-27,31}^{(0.11968+0.39436 i)} \displaybreak[1]\\
B_0(11132) & = \lambda_{1247219856,652215888,202145868,6602796,6554925,997056,165888,9768,496}^{(-0.28523+0.29746 i)} \displaybreak[1]\\
B_0(11133) & = \lambda_{77951241,-34088769,8046702,-1582902,130005,-10098,2412,24,16}^{(0.05208-0.21763 i)} \displaybreak[1]\\
B_0(11212) & = \lambda_{15397776,-13490712,3167748,-6096654,2141865,640836,414528,2883,961}^{(0.77203-0.06134 i)} \displaybreak[1]\\
B_0(11213) & = \lambda_{1247219856,3177168624,5503926672,4667265522,2501734815,231250113,72236142,4203666,122461}^{(-0.68071-0.73758 i)} \displaybreak[1]\\
B_0(11222) & = \lambda_{3941830656,2911168512,562837248,25208064,6989040,233784,7428,42,1}^{(-0.37586+0.02986 i)} \displaybreak[1]\\
B_0(11223) & = \lambda_{1247219856,-598818096,213923592,-131814378,67404555,-2266407,34767,-279,1}^{(0.404307+0.247347 i)} \displaybreak[1]\\
B_0(11232) & = \lambda_{1247219856,1315874160,1776121020,856453500,225182025,32508000,1964250,13125,625}^{(-0.16504+0.01311 i)} \displaybreak[1]\\
B_0(11233) & = \lambda_{77951241,-49106898,33170958,-32103216,9349425,-303966,101448,1392,16}^{(-0.16485+0.68888 i)} \displaybreak[1]\\
B_0(11312) & = \lambda_{1247219856,-884877696,195611112,-129102498,2093568525,564703758,25239672,-2431059,122461}^{(-0.55570+0.83581 i)} \displaybreak[1]\\
B_0(11313) & = \lambda_{77951241,74852262,35328798,11706039,2207655,166914,12168,432,16}^{(-0.36094+0.02868 i)} \displaybreak[1]\\
B_0(11322) & = \lambda_{35316,-24678,7389,168,1}^{(0.360178-0.308085 i)} \displaybreak[1]\\
B_0(11323) & = \lambda_{77951241,-155425716,96700392,-79082163,68641425,-4964112,278712,-144,16}^{(1.14251-0.09077 i)} \displaybreak[1]\\
B_0(11332) & = \lambda_{77951241,121336947,113029263,60444549,17861715,2067174,73368,-1248,16}^{(-0.27155-0.65421 i)} \displaybreak[1]\\
B_0(12122) & = \lambda_{15397776,-13490712,3167748,-6096654,2141865,640836,414528,2883,961}^{(-0.15040+0.62850 i)} \displaybreak[1]\\
B_0(12123) & = \lambda_{81,108,144,72,16}^{(-0.3333-1.0259 i)} \displaybreak[1]\\
B_0(12132) & = \lambda_{77951241,-75805794,99558072,-91700667,145832805,-61946748,27924912,-9145536,3041536}^{(0.7616-0.7639 i)} \displaybreak[1]\\
B_0(12133) & = \lambda_{77951241,-151373205,49269465,15230025,11397915,918000,319500,6000,400}^{(-0.008611+0.035986 i)} \displaybreak[1]\\
B_0(12213) & = \lambda_{1247219856,-2526859800,1974773520,-491005800,55811025,-10587375,2076750,-45000,625}^{(0.84435+0.51656 i)} \displaybreak[1]\\
B_0(12222) & = \lambda_{3941830656,370676736,12897792,-5087232,111600,36792,852,-6,1}^{(0.009547+0.023000 i)} \displaybreak[1]\\
B_0(12223) & = \lambda_{77951241,31228173,13579083,7130106,2059425,324216,29448,1458,31}^{(-0.18897-0.02576 i)} \displaybreak[1]\\
B_0(12232) & = \lambda_{1247219856,1670588064,-268120368,1191806622,1842147765,-527829912,98070912,-6203229,122461}^{(-1.29611+0.17986 i)} \displaybreak[1]\\
B_0(12233) & = \lambda_{77951241,83195667,22443723,14206509,7526925,-899316,106308,-48,16}^{(0.11277-0.47125 i)} \displaybreak[1]\\
B_0(12313) & = \lambda_{77951241,-175449888,329852088,-416257056,333376560,-158331456,44336448,-6899328,476416}^{(0.56984-0.44112 i)} \displaybreak[1]\\
B_0(12322) & = \lambda_{1247219856,-884877696,195611112,-129102498,2093568525,564703758,25239672,-2431059,122461}^{(1.04250-0.79086 i)} \displaybreak[1]\\
B_0(12323) & = \lambda_{77951241,325154412,587305728,611166384,426416400,193325184,52769088,7788672,476416}^{(-1.29172+0.49847 i)} \displaybreak[1]\\
B_0(12332) & = \lambda_{77951241,-134686395,185147775,-112077675,34665165,-5968350,586800,-24000,400}^{(0.44313+1.06757 i)} \displaybreak[1]\\
B_0(13132) & = \lambda_{190096,-298224,48892,140748,-31635,79392,9592,14424,29776}^{(-0.6626+0.0854 i)} \displaybreak[1]\\
B_0(13133) & = \lambda_{11881,31174,44152,52112,25680,11968,17152,-1024,4096}^{(-0.3125+1.3058 i)} \displaybreak[1]\\
B_0(13222) & = \lambda_{3041536,-390656,219552,-53008,49325,-14677,3417,-389,31}^{(0.10678-0.14092 i)} \displaybreak[1]\\
B_0(13223) & = \lambda_{190096,419432,437068,227614,74265,-12256,2458,97,1}^{(-0.27865-0.67130 i)} \displaybreak[1]\\
B_0(13232) & = \lambda_{190096,-298224,48892,140748,-31635,79392,9592,14424,29776}^{(1.1727-0.5217 i)} \displaybreak[1]\\
B_0(13233) & = \lambda_{11881,-15587,11038,-6514,1605,-374,268,8,16}^{(0.1562-0.6529 i)} \displaybreak[1]\\
B_0(13322) & = \lambda_{190096,155216,93912,6998,-10335,-2908,1022,39,1}^{(-0.38319-0.23443 i)} \displaybreak[1]\\
B_0(13323) & = \lambda_{11881,-3052,1228,1861,255,-1174,628,-152,16}^{(-0.5727-0.3503 i)} \displaybreak[1]\\
B_0(22223) & = \lambda_{1247219856,379505736,55366092,5211378,385155,25407,1422,54,1}^{(-0.058988-0.036087 i)} \displaybreak[1]\\
B_0(22233) & = \lambda_{77951241,-34088769,8046702,-1582902,130005,-10098,2412,24,16}^{(0.05208-0.21763 i)} \displaybreak[1]\\
B_0(22323) & = \lambda_{77951241,74852262,35328798,11706039,2207655,166914,12168,432,16}^{(-0.36094+0.02868 i)} \displaybreak[1]\\
B_0(23233) & = \lambda_{11881,31174,44152,52112,25680,11968,17152,-1024,4096}^{(-0.3125+1.3058 i)}
\end{align*}
}
\subsection{3333}\label{sec:generators3333}
The self-adjoint generators $A,B$ of $P_\bullet^{3333}$ have chiralities $\sigma_A=1$ and $\sigma_B=1$.
Their values on collapsed loops are as follows:
{\small
\begin{align*}
A(1112) & = \frac{1}{4} \left(2-\sqrt{5}\right) &
A(1113) & = \frac{1}{8} \left(3 \sqrt{5}-7\right) \displaybreak[1]\\
A(1122) & = \frac{1}{4} \left(\sqrt{5}-3\right) &
A(1123) & = \frac{1}{8} \left(3-\sqrt{5}\right) \displaybreak[1]\\
A(1132) & = \frac{1}{8} \left(3-\sqrt{5}\right) &
A(1133) & = \frac{1}{4} \left(3-\sqrt{5}\right) \displaybreak[1]\\
A(1212) & = \frac{1}{4} \left(\sqrt{5}-1\right) &
A(1213) & = \frac{1}{8} \left(-1-\sqrt{5}\right) \displaybreak[1]\\
A(1222) & = \frac{1}{4} \left(2-\sqrt{5}\right) &
A(1223) & = \frac{1}{8} \left(\sqrt{5}-1\right) \displaybreak[1]\\
A(1232) & = \frac{1}{8} \left(\sqrt{5}-1\right) &
A(1233) & = -\frac{1}{4} \displaybreak[1]\\
A(1313) & = \frac{1}{4} \left(3-\sqrt{5}\right) &
A(1322) & = \frac{1}{8} \left(\sqrt{5}-1\right) \displaybreak[1]\\
A(1323) & = \frac{1}{4} &
A(1332) & = -\frac{1}{4} \displaybreak[1]\\
A(1333) & = \frac{1}{8} \left(3 \sqrt{5}-7\right) &
A(2223) & = \frac{1}{8} \left(3-\sqrt{5}\right) \displaybreak[1]\\
A(2233) & = 0 &
A(2323) & = -\frac{1}{2} \displaybreak[1]\\
A(2333) & = \frac{1}{8} \left(3-\sqrt{5}\right)
\end{align*}\begin{align*}
B(1112) & = \frac{1}{8} \left(7 \sqrt{5}-15\right) &
B(1113) & = \frac{1}{8} \left(\sqrt{5}-3\right) \displaybreak[1]\\
B(1122) & = \frac{1}{2} \left(2-\sqrt{5}\right) &
B(1123) & = \frac{1}{4} \displaybreak[1]\\
B(1132) & = \frac{1}{4} &
B(1133) & = \frac{1}{2} \left(2-\sqrt{5}\right) \displaybreak[1]\\
B(1212) & = \frac{1}{4} \left(5-3 \sqrt{5}\right) &
B(1213) & = \frac{1}{4} \left(2-\sqrt{5}\right) \displaybreak[1]\\
B(1222) & = \frac{1}{8} \left(7 \sqrt{5}-15\right) &
B(1223) & = \frac{1}{8} \left(5-3 \sqrt{5}\right) \displaybreak[1]\\
B(1232) & = \frac{1}{8} \left(1+\sqrt{5}\right) &
B(1233) & = \frac{1}{8} \left(3 \sqrt{5}-7\right) \displaybreak[1]\\
B(1313) & = \frac{1}{2} &
B(1322) & = \frac{1}{8} \left(5-3 \sqrt{5}\right) \displaybreak[1]\\
B(1323) & = \frac{1}{8} \left(\sqrt{5}-5\right) &
B(1332) & = \frac{1}{8} \left(3 \sqrt{5}-7\right) \displaybreak[1]\\
B(1333) & = \frac{1}{8} \left(\sqrt{5}-3\right) &
B(2223) & = \frac{1}{4} \left(9-4 \sqrt{5}\right) \displaybreak[1]\\
B(2233) & = \sqrt{5}-2 &
B(2323) & = \frac{1}{4} \left(3 \sqrt{5}-7\right) \displaybreak[1]\\
B(2333) & = \frac{1}{4} \left(9-4 \sqrt{5}\right)
\end{align*}
}
Clearly all of these entries lie in the field $\mathbb{Q}(\sqrt{5})$.


\subsection{3311}\label{sec:generators3311}
The self-adjoint generators $A,B$ of $P_\bullet^{3311}$ have chiralities $\sigma_A=i$, $\omega_A=\sigma_A^2 = -1$, and $\sigma_B=\omega_B=1$.
Their values on collapsed loops are as follows:
{\small
\begin{align*}
A(1112) & = \frac{1}{6} \left(9-5 \sqrt{3}\right) &
A(1113) & = \frac{1}{6} \left(3 \sqrt{3}-5\right) \displaybreak[1]\\
A(1123) & = \frac{1}{3} \left(\sqrt{3}-1\right) &
A(1132) & = \frac{1}{3} \left(1-\sqrt{3}\right) \displaybreak[1]\\
A(1212) & = \frac{1}{6} \left(5 \sqrt{3}-9\right) &
A(1213) & = \frac{1}{6} \left(\sqrt{3}-3\right) \displaybreak[1]\\
A(1232) & = \frac{1}{3} \left(\sqrt{3}-1\right) &
A(1313) & = \frac{1}{6} \left(7-3 \sqrt{3}\right) \displaybreak[1]\\
A(1323) & = \frac{1}{3} \left(1-\sqrt{3}\right) &
A(2323) & = 0
\end{align*}\begin{align*}
B(1112) & = \frac{1}{4} \left(9-5 \sqrt{3}\right) &
B(1113) & = \frac{1}{12} \left(\sqrt{3}-3\right) \displaybreak[1]\\
B(1123) & = \frac{1}{6} \left(3-\sqrt{3}\right) &
B(1132) & = \frac{1}{6} \left(3-\sqrt{3}\right) \displaybreak[1]\\
B(1212) & = \frac{1}{4} \left(13 \sqrt{3}-21\right) &
B(1213) & = \frac{1}{4} \left(\sqrt{3}-3\right) \displaybreak[1]\\
B(1232) & = \frac{1}{6} \left(7 \sqrt{3}-15\right) &
B(1313) & = \frac{1}{12} \left(1-\sqrt{3}\right) \displaybreak[1]\\
B(1323) & = \frac{1}{6} \left(\sqrt{3}-1\right) &
B(2323) & = \frac{1}{3} \left(2 \sqrt{3}-2\right)
\end{align*}
}
Clearly these entries all lie in $\mathbb{Q}(\sqrt{3})$.

\subsection{2221}\label{sec:generators2221}
As in Subsection \ref{sec:generators4442}, we work with non-self-adjoint generators for $2221$. We will correct them by phases in Subsection \ref{sec:moments2221} to get self-adjoint elements.

The generators $A_0,B_0$ have rotational eigenvalues
$$
\omega_A = \exp\left(2\pi i \frac{1}{3}\right) \hspace{2cm}\omega_B=1
$$
for which we choose square roots
$$
\sigma_A=\exp\left(2\pi i \frac{4}{6}\right)\hspace{2cm} \sigma_B=1.
$$
Their values on collapsed loops are as follows:
{\small
\begin{align*}
A_0(112) & = \lambda_{9,9,9,-18,21,-12,12,-6,1}^{(0.5762-0.5412 i)} &
A_0(113) & = \lambda_{81,81,54,-27,-27,-9,6,3,1}^{(-0.4208+0.3953 i)} \displaybreak[1]\\
A_0(122) & = \lambda_{9,9,9,-18,21,-12,12,-6,1}^{(0.3194+0.0964 i)} &
A_0(123) & = \lambda_{9,9,12,-3,1}^{(-0.6319-1.0945 i)} \displaybreak[1]\\
A_0(132) & = \lambda_{9,27,30,12,1}^{(-1.1319+0.5621 i)} &
A_0(223) & = \lambda_{81,81,54,-27,-27,-9,6,3,1}^{(-0.4208+0.3953 i)}
\end{align*}\begin{align*}
B_0(112) & = \lambda_{225,-45,288,-45,201,-39,36,3,1}^{(-0.0605-0.1502 i)} &
B_0(113) & = \lambda_{45,36,42,3,-1}^{(-0.3528-0.8759 i)} \displaybreak[1]\\
B_0(122) & = \lambda_{225,-45,288,-45,201,-39,36,3,1}^{(0.3396+0.8433 i)} &
B_0(123) & = \lambda_{45,54,24,-39,5}^{(-0.9110+0.8759 i)} \displaybreak[1]\\
B_0(132) & = \frac{1}{6} \left(3+\sqrt{21}\right) &
B_0(223) & = \lambda_{45,36,42,3,-1}^{(-0.3528-0.8759 i)}
\end{align*}
}
These entries all lie in the number field $\mathbb{Q}(\mu_{2221})$ where $\mu_{2221}$ is the root of 
\begin{dmath*}
x^8-18 x^7-345 x^6+7146 x^5+84726 x^4-1458918 x^3-13821786 x^2+101759328 x+1245393549
\end{dmath*}
which is approximately $14.85+9.90 i$.

In fact, we could have tried working directly with self-adjoint generators, but this would have required a degree 32 number field. We could still calculate the requisite moments; however, we could not express the values of the generators on collapsed loops in the above compact form, and  instead, we would have to write out the 32 coefficients in the number field for each value.

\section{Moments}\label{sec:moments}

In the following subsections, we give the quadratic and cubic moments for our generators calculated directly from the graph planar algebra. Recall that the generators given in Appendix \ref{sec:generators} are not normalized, so $\Tr(A^2)$ and $\Tr(B^2)$ need not be equal to $1$.

\subsection{4442}\label{sec:moments4442}
Recall from Appendix \ref{sec:generators4442} that the generators given for $4442$ are not self-adjoint, since it would require working in an even larger number field (as if $\mathbb{Q}(\mu_{4442})$ isn't terrifying enough)! However, for each non self-adjoint $S_0\in\fB_0$, there is a $\gamma_S\in U(1)$ such that $S=\gamma_S S_0$ is self-adjoint. Hence we compute the moments in the graph planar algebra using the non self-adjoint generators, and we correct them afterward to agree with what we would get from first making our generators self-adjoint. This amounts to multiplying the moment by the correction factor for each generator that appears, e.g., 
$$
\Tr(A^2B)=\Tr((\gamma_AA_0)^2\gamma_BB_0)= \gamma_A^2\gamma_B \Tr(A_0^2 B_0).
$$
In the case of 4442, the correction factors for $A_0,B_0$ are equal and given by
$$
\gamma_A=\gamma_B=\sqrt{\frac{437}{872}-\frac{135 \sqrt{5}}{872}+\frac{1}{436} i \sqrt{\frac{15}{2} \left(15943+3933 \sqrt{5}\right)}},
$$
and the corrected moments are as follows: 
\begin{align*}
\Tr(A^2) &= \sqrt{\frac{12706131015}{11881}+\frac{5682354525 \sqrt{5}}{11881}}\displaybreak[1]\\
\Tr(AB) &= 0\displaybreak[1]\\
\Tr(B^2) &= \sqrt{\frac{12706131015}{11881}+\frac{5682354525 \sqrt{5}}{11881}}\displaybreak[1]\\
\Tr(A^3) &= \sqrt{\frac{36768021311025}{2590058}+\frac{16443159009075 \sqrt{5}}{2590058}}\displaybreak[1]\\
\Tr(A^2B) &= -\sqrt{\frac{25201176715665}{1295029}+\frac{11270308849830 \sqrt{5}}{1295029}}\displaybreak[1]\\
\Tr(AB^2) &= -\sqrt{\frac{25201176715665}{1295029}+\frac{11270308849830 \sqrt{5}}{1295029}}\displaybreak[1]\\
\Tr(B^3) &= \sqrt{\frac{36768021311025}{2590058}+\frac{16443159009075 \sqrt{5}}{2590058}}\displaybreak[1]\\
\displaybreak[1]\\
\Tr(\check{A}^2) &= \sqrt{\frac{12706131015}{11881}+\frac{5682354525 \sqrt{5}}{11881}}\displaybreak[1]\\
\Tr(\check{A}\check{B}) &= 0\displaybreak[1]\\
\Tr(\check{B}^2) &= \sqrt{\frac{12706131015}{11881}+\frac{5682354525 \sqrt{5}}{11881}}\displaybreak[1]\\
\Tr(\check{A}^3) &= \sqrt{\frac{36768021311025}{2590058}+\frac{16443159009075 \sqrt{5}}{2590058}}\displaybreak[1]\\
\Tr(\check{A}^2\check{B}) &= -\sqrt{\frac{25201176715665}{1295029}+\frac{11270308849830 \sqrt{5}}{1295029}}\displaybreak[1]\\
\Tr(\check{A}\check{B}^2) &= -\sqrt{\frac{25201176715665}{1295029}+\frac{11270308849830 \sqrt{5}}{1295029}}\displaybreak[1]\\
\Tr(\check{B}^3) &= \sqrt{\frac{36768021311025}{2590058}+\frac{16443159009075 \sqrt{5}}{2590058}}
\end{align*}

\subsection{3333}\label{sec:moments3333}
Since the generators of 3333 are self-adjoint, no corrections are needed. The moments are as follows:
\begin{align*}
\Tr(A^2) &= \frac{1}{2} \left(11+5 \sqrt{5}\right)&\Tr(\check{A}^2) &= \frac{1}{2} \left(11+5 \sqrt{5}\right)\\
\Tr(AB) &= 0&\Tr(\check{A}\check{B}) &= 0\\
\Tr(B^2) &= 6+3 \sqrt{5}&\Tr(\check{B}^2) &= 6+3 \sqrt{5}\\
\Tr(A^3) &= \frac{1}{8} \left(-27-12 \sqrt{5}\right)&\Tr(\check{A}^3) &= 0\\
\Tr(A^2B) &= \frac{1}{16} \left(-25-11 \sqrt{5}\right)&\Tr(\check{A}^2\check{B}) &= -\sqrt{\frac{123}{4}+\frac{55 \sqrt{5}}{4}}\\
\Tr(AB^2) &= \frac{1}{16} \left(63+27 \sqrt{5}\right)&\Tr(\check{A}\check{B}^2) &= 0\\
\Tr(B^3) &= \frac{1}{8} \left(15+6 \sqrt{5}\right)&\Tr(\check{B}^3) &= 3 \sqrt{\frac{1}{2} \left(9+4 \sqrt{5}\right)}
\end{align*}

\subsection{3311}\label{sec:moments3311}
Since the generators of 3311 are self-adjoint, no corrections are needed. The moments are as follows:
\begin{align*}
\Tr(A^2) &= \frac{1}{9} \left(12+8 \sqrt{3}\right)&\Tr(\check{A}^2) &= \frac{1}{9} \left(12+8 \sqrt{3}\right)\\
\Tr(AB) &= 0&\Tr(\check{A}\check{B}) &= 0\\
\Tr(B^2) &= \frac{1}{3} \left(9+4 \sqrt{3}\right)&\Tr(\check{B}^2) &= \frac{1}{3} \left(9+4 \sqrt{3}\right)\\
\Tr(A^3) &= \frac{1}{27} \left(-6-4 \sqrt{3}\right)&\Tr(\check{A}^3) &= 0\\
\Tr(A^2B) &= \frac{1}{9} \left(-3-2 \sqrt{3}\right)&\Tr(\check{A}^2\check{B}) &= -\sqrt{\frac{44}{27}+\frac{76}{27 \sqrt{3}}}\\
\Tr(AB^2) &= \frac{1}{18} \left(15+14 \sqrt{3}\right)&\Tr(\check{A}\check{B}^2) &= 0\\
\Tr(B^3) &= \frac{1}{12} \left(23-2 \sqrt{3}\right)&\Tr(\check{B}^3) &= \sqrt{\frac{9}{4}+\frac{1}{4 \sqrt{3}}}
\end{align*}

\subsection{2221}\label{sec:moments2221}
As in Subsection \ref{sec:moments4442} for 4442, we now correct our non-self-adjoint generators for 2221 by multiplying by the following phases:
\begin{align*}
\gamma_A&=
\frac{1}{2} \sqrt{1-\sqrt{21}-i \sqrt{2 \left(\sqrt{21}-3\right)}}
\\
\gamma_B&=
\sqrt{-\frac{19}{20}+\frac{\sqrt{21}}{20}-\frac{1}{10} i \sqrt{\frac{1}{2} \left(9+19 \sqrt{21}\right)}},
\end{align*}
which yield the corrected moments:
\begin{align*}
\Tr(A^2) &= 3 \sqrt{55+12 \sqrt{21}}&\Tr(\check{A}^2) &= 3 \sqrt{55+12 \sqrt{21}}\\
\Tr(AB) &= 0&\Tr(\check{A}\check{B}) &= 0\\
\Tr(B^2) &= \sqrt{\frac{60093}{50}+\frac{13113 \sqrt{21}}{50}}&\Tr(\check{B}^2) &= \sqrt{\frac{60093}{50}+\frac{13113 \sqrt{21}}{50}}\\
\Tr(A^3) &= -18-4 \sqrt{21}&\Tr(\check{A}^3) &= -18-4 \sqrt{21}\\
\Tr(A^2B) &= \sqrt{\frac{4413}{10}+\frac{963 \sqrt{21}}{10}}&\Tr(\check{A}^2\check{B}) &= \sqrt{\frac{4413}{10}+\frac{963 \sqrt{21}}{10}}\\
\Tr(AB^2) &= \frac{1}{5} \left(198+43 \sqrt{21}\right)&\Tr(\check{A}\check{B}^2) &= \frac{1}{5} \left(198+43 \sqrt{21}\right)\\
\Tr(B^3) &= -\sqrt{\frac{11667}{250}+\frac{2547 \sqrt{21}}{250}}&\Tr(\check{B}^3) &= -\sqrt{\frac{11667}{250}+\frac{2547 \sqrt{21}}{250}}
\end{align*}

\bibliographystyle{alpha}
\bibliography{../../bibliography/bibliography}

\newcommand{\noopsort}[1]{}\def\cprime{$'$} \def\cprime{$'$}
\begin{thebibliography}{GdlHJ89}

\bibitem[BMPS12]{0909.4099}
Stephen Bigelow, Scott Morrison, Emily Peters, and Noah Snyder.
\newblock Constructing the extended {H}aagerup planar algebra.
\newblock {\em Acta Math.}, 209(1):29--82, 2012.
\newblock \arxiv{0909.4099} \doi{10.1007/s11511-012-0081-7}
  \mathscinet{MR2979509}.

\bibitem[BP12]{1208.1564}
Stephen Bigelow and David Penneys.
\newblock Principal graph stability and the jellyfish algorithm, 2012.
\newblock \arxiv{1208.1564}.

\bibitem[CMS11]{1004.0665}
Frank Calegari, Scott Morrison, and Noah Snyder.
\newblock Cyclotomic integers, fusion categories, and subfactors.
\newblock {\em Communications in Mathematical Physics}, 303(3):845--896, 2011.
\newblock With an appendix by Victor Ostrik. \arxiv{1004.0665}
  \doi{10.1007/s00220-010-1136-2} \mathscinet{MR2786219}.

\bibitem[EK98]{MR1642584}
David~E. Evans and Yasuyuki Kawahigashi.
\newblock {\em Quantum symmetries on operator algebras}.
\newblock Oxford Mathematical Monographs. The Clarendon Press Oxford University
  Press, New York, 1998.
\newblock \mathscinet{MR1642584}.

\bibitem[GdlHJ89]{MR999799}
Frederick~M. Goodman, Pierre de~la Harpe, and Vaughan F.~R. Jones.
\newblock {\em Coxeter graphs and towers of algebras}, volume~14 of {\em
  Mathematical Sciences Research Institute Publications}.
\newblock Springer-Verlag, New York, 1989.
\newblock \mathscinet{MR999799}.

\bibitem[Haa94]{MR1317352}
Uffe Haagerup.
\newblock Principal graphs of subfactors in the index range
  {$4<[M:N]<3+\sqrt2$}.
\newblock In {\em Subfactors ({K}yuzeso, 1993)}, pages 1--38. World Sci. Publ.,
  River Edge, NJ, 1994.
\newblock \mathscinet{MR1317352}.

\bibitem[Han10]{1102.2052}
Richard Han.
\newblock {\em A Construction of the ``2221'' Planar Algebra}.
\newblock PhD thesis, University of California, Riverside, 2010.
\newblock \arxiv{1102.2052}.

\bibitem[IJMS11]{1109.3190}
Masaki Izumi, Vaughan F.~R. Jones, Scott Morrison, and Noah Snyder.
\newblock {Subfactors of index less than 5, part 3: quadruple points}.
\newblock {\em Communications in Mathematical Physics}, 2011.
\newblock \arxiv{1109.3190} \doi{10.1007/s00220-012-1472-5} accepted October 8
  2011.

\bibitem[Izu01]{MR1832764}
Masaki Izumi.
\newblock The structure of sectors associated with {L}ongo-{R}ehren inclusions.
  {II}. {E}xamples.
\newblock {\em Rev. Math. Phys.}, 13(5):603--674, 2001.
\newblock \mathscinet{MR1832764} \doi{10.1142/S0129055X01000818}.

\bibitem[Jon01]{MR1929335}
Vaughan F.~R. Jones.
\newblock The annular structure of subfactors.
\newblock In {\em Essays on geometry and related topics, {V}ol. 1, 2},
  volume~38 of {\em Monogr. Enseign. Math.}, pages 401--463. Enseignement
  Math., Geneva, 2001.
\newblock \mathscinet{MR1929335}.

\bibitem[Jon03]{math/1007.1158}
Vaughan F.~R. Jones.
\newblock Quadratic tangles in planar algebras, 2003.
\newblock \arxiv{1007.1158}, To appear in Duke Math. Journ.

\bibitem[Jon11]{JonesPANotes}
Vaughan F.~R. Jones.
\newblock Notes on planar algebras.
\newblock \url{http://math.berkeley.edu/~vfr/VANDERBILT/pl21.pdf}, 2011.

\bibitem[JP11]{MR2812459}
Vaughan F.~R. Jones and David Penneys.
\newblock The embedding theorem for finite depth subfactor planar algebras.
\newblock {\em Quantum Topol.}, 2(3):301--337, 2011.
\newblock \arxiv{1007.3173}, \mathscinet{MR2812459} \doi{10.4171/QT/23}.

\bibitem[Kaw95]{MR1355948}
Yasuyuki Kawahigashi.
\newblock Classification of paragroup actions in subfactors.
\newblock {\em Publ. Res. Inst. Math. Sci.}, 31(3):481--517, 1995.
\newblock \mathscinet{MR1355948}.

\bibitem[Mor]{morrison}
Scott Morrison.
\newblock A formula for the {J}ones-{W}enzl projections.
\newblock Unpublished, available at
  \url{http://tqft.net/math/JonesWenzlProjections.pdf}.

\bibitem[MP12]{1205.2742}
Scott Morrison and Emily Peters.
\newblock The little desert? {S}ome subfactors with index in the interval
  $(5,3+\sqrt{5})$, 2012.
\newblock \arxiv{1205.2742}.

\bibitem[MPP12]{4442equi}
Scott Morrison, David Penneys, and Emily Peters.
\newblock Equivariantizations and 3333 spoke subfactors at index $3+\sqrt{5}$,
  2012.
\newblock In preparation.

\bibitem[MPPS12]{1007.2240}
Scott Morrison, David Penneys, Emily Peters, and Noah Snyder.
\newblock Classification of subfactors of index less than 5, part 2: triple
  points.
\newblock {\em International Journal of Mathematics}, 23(3):1250016, 2012.
\newblock \arxiv{1007.2240} \doi{10.1142/S0129167X11007586}
  \mathscinet{MR2902285}.

\bibitem[MPS10]{MR2559686}
Scott Morrison, Emily Peters, and Noah Snyder.
\newblock Skein theory for the {$\mathcal{D}_{2n}$} planar algebras.
\newblock {\em J. Pure Appl. Algebra}, 214(2):117--139, 2010.
\newblock \arxiv{math/0808.0764} \mathscinet{MR2559686}
  \doi{10.1016/j.jpaa.2009.04.010}.

\bibitem[MS12]{index5-part1}
Scott Morrison and Noah Snyder.
\newblock Subfactors of index less than 5, part 1: the principal graph
  odometer.
\newblock {\em Communications in Mathematical Physics}, 312(1):1--35, 2012.
\newblock \arxiv{1007.1730} \doi{10.1007/s00220-012-1426-y}
  \mathscinet{MR2914056}.

\bibitem[MW]{tvc}
Scott Morrison and Kevin Walker.
\newblock Planar algebras, connections, and {T}uraev-{V}iro theory.
\newblock preprint available at \url{http://tqft.net/tvc}.

\bibitem[Ocn88]{MR996454}
Adrian Ocneanu.
\newblock Quantized groups, string algebras and {G}alois theory for algebras.
\newblock In {\em Operator algebras and applications, Vol.\ 2}, volume 136 of
  {\em London Math. Soc. Lecture Note Ser.}, pages 119--172. Cambridge Univ.
  Press, Cambridge, 1988.
\newblock \mathscinet{MR996454}.

\bibitem[Rez07]{MR2375712}
Sarah~A. Reznikoff.
\newblock Coefficients of the one- and two-gap boxes in the {J}ones-{W}enzl
  idempotent.
\newblock {\em Indiana Univ. Math. J.}, 56(6):3129--3150, 2007.
\newblock \mathscinet{MR2375712} \doi{10.1512/iumj.2007.56.3140}.

\end{thebibliography}

\end{document}